\documentclass{amsart}
\pdfoutput=1
\usepackage[margin=3cm]{geometry}
\usepackage{color}
\usepackage{amsmath}
\usepackage{amssymb}
\usepackage{bbm}
\usepackage{tikz-cd}
\usepackage{amsthm}
\usepackage[all,arc,2cell]{xy}
\usepackage{mathrsfs}
\usepackage{mathtools}
\usepackage{hyperref} 
\hypersetup{%
  bookmarksnumbered=true,%
  colorlinks=true,%
  linkcolor=blue,%
  citecolor=blue,%
  filecolor=blue,%
  menucolor=blue,%
  urlcolor=blue,%
  pdfnewwindow=true,%
  pdfstartview=FitBH}   
\usepackage[shortlabels]{enumitem}
\usepackage{import}
\usepackage{graphicx}
\graphicspath{ {images/} }
\usepackage{color}
\usepackage{nicematrix}
\usepackage{transparent}
\usepackage{cleveref}
\creflabelformat{equation}{#2\textup{#1}#3}
\newcommand\restr[2]{{
  \left.\kern-\nulldelimiterspace 
  #1 
  \vphantom{\big|} 
  \right|_{#2} 
  }}

\newtheorem{theorem}{Theorem}[section]
\newtheorem{proposition}[theorem]{Proposition}
\newtheorem{lemma}[theorem]{Lemma}

\theoremstyle{definition}
\newtheorem{example}[theorem]{Example}

\theoremstyle{definition}
\newtheorem{definition}[theorem]{Definition}
\newtheorem*{definition*}{Definition}

\newtheorem{remark}[theorem]{Remark}

\numberwithin{equation}{subsection}

\renewcommand\a{\alpha}
\renewcommand\b{\beta}
\newcommand\g{\gamma}
\newcommand\G{\Gamma}
\renewcommand\d{\delta}
\newcommand\D{\Delta}
\newcommand\e{\epsilon}

\newcommand\z{\zeta}

\newcommand\m{\mu}
\newcommand\n{\nu}
\newcommand\x{\xi}

\renewcommand\r{\rho}

\newcommand\s{\sigma}
\renewcommand\S{\Sigma}
\renewcommand\t{\tau}

\newcommand\ps{\psi}

\renewcommand\c{\chi}

\newcommand{\bbF}{\mathbb{F}}

\newcommand{\bbQ}{\mathbb{Q}}
\newcommand{\bbR}{\mathbb{R}}
\newcommand{\bbZ}{\mathbb{Z}}

\newcommand{\bbT}{\mathbb{T}}

\newcommand{\wta}{\widetilde{\alpha}}
\newcommand{\wtb}{\widetilde{\beta}}
\newcommand{\wtg}{\widetilde{\gamma}}

\newcommand{\ngr}{\operatorname{gr}'}
\newcommand{\gr}{\operatorname{gr}}
\newcommand{\grh}{\widehat{\operatorname{gr}}}

\newcommand{\bsa}{\boldsymbol{\alpha}}
\newcommand{\bsb}{\boldsymbol{\beta}}
\newcommand{\bsg}{\boldsymbol{\gamma}}
\newcommand{\bsd}{\boldsymbol{\delta}}

\newcommand{\bfx}{\mathbf{x}}
\newcommand{\bfy}{\mathbf{y}}
\newcommand{\bfw}{\mathbf{w}}

\newcommand{\sign}{\text{sgn}}
\newcommand{\HF}{\mathit{HF}}
\newcommand{\HFR}{\mathit{HFR}}
\newcommand{\CF}{\mathit{CF}}
\newcommand{\CFR}{\mathit{CFR}}

\newcommand{\cs}{\c_{\mathfrak{s}}}
\newcommand{\ctot}{\c_{\operatorname{tot}}}

\newcommand{\bfxpf}{\mathbf{x}_{\text{first}}^+}
\newcommand{\bfxpl}{\mathbf{x}_{\text{last}}^+}
\newcommand{\bfxmf}{\mathbf{x}_{\text{first}}^-}
\newcommand{\bfxml}{\mathbf{x}_{\text{last}}^-}

\newcommand{\mycomment}[1]{}

\begin{document}

\title{Absolute $\bbZ/2$ Gradings in Real Heegaard Floer Homology}
\author{Eha Srivastava}
\address{Department of Mathematics\\Stanford University\\
		Building 380\\
		Stanford, California 94305}
\email{esrivas@stanford.edu}

\begin{abstract}
Real Heegaard Floer homology is an invariant associated to a three-manifold equipped with an involution with nonempty fixed set of codimension two. We show that when the image of the fixed point set is nullhomologous in the quotient, the real Heegaard Floer homology groups admit an absolute $\bbZ/2$ grading; in particular this applies to double branched covers of links in $S^3$. As an application, we define a $\bbZ$-valued invariant of knots, which is the appropriate signed analogue of Miyazawa's degree invariant. Furthermore, we show that this invariant is equal to the Alexander polynomial of the knot evaluated at $i$. 
\end{abstract}

\maketitle 

\section{Introduction}
In \cite{OS1}, Ozsv\'ath and Szab\'o define a series of Floer theories associated to a closed, oriented three-manifold $Y$, called the Heegaard Floer homology groups and denoted $\mathit{HF}^\circ(Y)$, where $\circ \in \{-, \infty, +, \hat{\text{ }}\}$. 
Recently, Guth and Manolescu defined \emph{real} versions of the Heegaard Floer homology groups in \cite{GM1}, which are associated to a three-manifold $Y$ with an involution $\t$ whose fixed set is nonempty and has codimension two. 

The usual Heegaard Floer theories admit various gradings. There is a natural decomposition 
\[\mathit{HF}^\circ(Y) = \bigoplus_{\mathfrak{s} \in \text{Spin}^c(Y)} \mathit{HF}^\circ(Y, \mathfrak{s})\] 
and each group $\HF^\circ(Y, \mathfrak{s})$ has a relative $\bbZ/\d(\mathfrak{s})$ grading, where 
\[\d(\mathfrak{s}) = \gcd_{\x \in H_2(Y ; \bbZ)} \langle c_1(\mathfrak{s}), \x\rangle.\]
This relative grading can be lifted to an absolute $\bbQ$ grading when the Spin$^c$-structure $\mathfrak{s}$ is torsion; see \cite{OS3}. 
Ozsv\'ath and Szab\'o also show in \cite{OS2} that $\HF^\circ(Y, \mathfrak{s})$ can be endowed with an absolute $\bbZ/2$ grading for any Spin$^c$-structure $\mathfrak{s}$. 
There is an analogue of this $\bbZ/2$ grading in monopole Floer homology; see \cite{KM1}. 
In the Heegaard Floer setting, the grading can be used to pin down signs of the Euler characteristics of these Floer homology theories. This has a number of applications:
\begin{itemize}
    \item It is used in \cite{OS2} to establish a relation between $\chi(\HF^+(Y))$ and Turaev's torsion function. 
    \item In the case that $Y$ is a integral homology three-sphere, this grading is used to prove a relationship between the Euler characteristic of $\HF^+_{\text{red}}(Y)$, the Casson invariant, and the correction term; see Theorem 1.3 of \cite{OS4}. 
    \item The absolute $\bbZ/2$ grading is also a key tool in defining the mixed cobordism invariants. It is used to prove a vanishing result about maps on $\HF^\infty$ induced by certain cobordisms, which is then needed to define the mixed cobordism invariants; see Lemma 8.1 of \cite{OS3}. 
    \item It is used in \cite{AM1} to set up the Heegaard Floer complexes over the integers. 
\end{itemize}

The definition of the absolute $\bbZ/2$ grading in \cite{OS2} is somewhat complicated, as it requires computing the Heegaard Floer homology with totally twisted coefficients and showing that $\underline{\HF}^\infty(Y, \mathfrak{s}) \cong \bbZ[U, U^{-1}]$. 
Declaring this to be supported in even degree determines the grading. Working in the more general context of sutured Floer homology, Friedel, Juh\'asz, and Rasmussen proposed an alternative definition of a $\bbZ/2$ grading in \cite{FJR1}. 
In constrast to the original definition, their definition only uses the cohomological data of an associated Heegaard diagram. 
The grading in \cite{FJR1} introduces an additional factor of $\pm 1$ depending on the rank of the first homology group of the manifold, and differs from the grading in \cite{OS2} by this factor. 
This grading was specialized to the case of closed three-manifolds by Petkova in \cite{P1}. 
Petkova does not include this additional factor, so that the grading in \cite{P1} agrees with the the original grading in \cite{OS2}. This is explicitly shown for three-manifolds whose first homology has rank zero.
As a warm-up to the real case, we study the grading in \cite{P1}, presenting it in a slightly different manner. 
For a three-manifold $Y$ and an associated Heegaard diagram $(\S, \bsa, \bsb, z)$, we first construct a function $\ngr : \CF^\infty(\bbT_\a, \bbT_\b) \to \bbZ/2$ using the Heegaard diagram. We then fill in the details of the proof that this grading is well-defined and agrees with the one in \cite{OS2} for all three-manifolds. 

\begin{theorem}
    Let $Y$ be a closed oriented three-manifold, and let $(\S, \bsa, \bsb, z)$ be an admissible pointed Heegaard diagram for $Y$. The function $\ngr : \mathit{CF}^\infty(\bbT_\a, \bbT_\b) \to \bbZ/2$ determines a well-defined absolute $\bbZ/2$ grading $\ngr$ on $\HF^\infty(Y)$. Moreover, this grading agrees with $\text{\emph{gr}}$, the absolute $\bbZ/2$ grading defined in \cite{OS2}.  
    \label{main-theorem1}
\end{theorem}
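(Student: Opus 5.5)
The plan is to work with the explicit form of $\ngr$ in the style of \cite{FJR1, P1}. Fix orientations of $\S$, of the curves $\a_1,\dots,\a_g$ and $\b_1,\dots,\b_g$, and an ordering of the curves. For a generator $\bfx = \{x_1,\dots,x_g\}$ with $x_i \in \a_i \cap \b_{\s(i)}$, I take $\ngr(\bfx)$ to be determined by
\[
(-1)^{\ngr(\bfx)} = \sign(\s)\prod_i \e(x_i),
\]
where $\e(x_i)$ is the local intersection sign. On $\CF^\infty$ the grading is extended by declaring $\ngr([\bfx,i]) = \ngr(\bfx)$, so that $U$ preserves it.

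The proof then splits into four steps.

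\textbf{Step 1: the differential is odd.} I must show that for $\phi \in \pi_2(\bfx,\bfy)$ with Maslov index $\m(\phi)=1$, we have $\ngr(\bfx) \neq \ngr(\bfy)$. More generally I would prove
\[
\ngr(\bfx) - \ngr(\bfy) \equiv \m(\phi) \pmod 2
\]
for any $\phi \in \pi_2(\bfx,\bfy)$. The key observation is that $(-1)^{\ngr(\bfx)}$ is exactly the sign of $\bfx$ as an intersection point of the oriented tori $\bbT_\a$ and $\bbT_\b$ in $\mathrm{Sym}^g(\S)$. For transverse Lagrangians, the mod $2$ Maslov index of a strip between two intersection points equals the difference of their local intersection signs: the parity of the Maslov index is the parity of the number of negative directions in the canonical path between the tangent spaces, which is read off from orientations. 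This is the standard Floer-theoretic fact. Once it holds, the differential changes $\ngr$ by $1$, so $\ngr$ descends to a relative grading on $\HF^\infty$, compatible with the relative $\bbZ/2$ grading of \cite{OS1}.

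\textbf{Step 2: independence of choices.} Changing the orientation of a single curve flips every $\e(x_i)$ on that curve. Each generator has exactly one point on each curve, so every generator's sign changes by the same factor and the grading shifts globally. Reordering the curves multiplies all generators by the same permutation sign. I would then normalize, for instance by fixing orientations so that the induced orientations of $\bbT_\a$ and $\bbT_\b$ are compatible with $Y$ via the handlebody orientation convention. Alternatively, I would note that both quantities can be absorbed into a global sign convention fixed by comparison in Step 3.

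\textbf{Step 3: comparison with $\gr$.} This is the main obstacle. The approach is to use the fact that two absolute $\bbZ/2$ gradings that induce the same relative grading differ by a constant on each Spin$^c$ summand, and then compute that constant. The OS2 grading is characterized through totally twisted coefficients: $\underline{\HF}^\infty$ is $\bbZ[U,U^{-1}]$ in even degree, equivalently through the Euler characteristic of $\underline{\widehat{\HF}}$ or $\underline{\HF}^{\le 0}$ relative to Turaev torsion. So I would compute $\chi$ of $\underline{\widehat{\CF}}$ using $\ngr$. Summing the signs $\sign(\s)\prod\e(x_i)$ weighted by group-ring elements produces a determinant of the Fox-calculus-type matrix of $\a$–$\b$ intersections. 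That determinant is Turaev's torsion, as in the proof in \cite{OS2} relating $\chi$ to torsion. Matching the overall sign requires care when $b_1(Y) > 0$: one must check that no extra $(-1)^{b_1}$ factor appears, in contrast with \cite{FJR1}. I would settle this by carrying out a stabilization and handleslide check, and by comparing on $\#^k S^1\times S^2$ and on $S^3$ directly.

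\textbf{Step 4: invariance.} Invariance under isotopies, handleslides and stabilizations then follows from agreement with $\gr$, which is already known to be invariant. Alternatively, I would verify it directly by checking that the triangle maps preserve $\ngr$.
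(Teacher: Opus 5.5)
\textbf{Main gap: the absolute normalization is missing.} Your Step 1 is fine (it is \Cref{maslov}), and the alternative in your Step 4 is what the paper actually does in \Cref{gr-inv}. The problem is the function you write, $(-1)^{\ngr(\bfx)}=\sign(\s)\prod_i\e(x_i)$. It depends on arbitrary orientations and an arbitrary ordering of the curves, and as your Step 2 observes, changing these flips it by a global sign. So it is only a relative grading; it is not the $\ngr$ of the theorem.

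The whole content of \Cref{new_gr} is a canonical choice of that global sign:
\begin{itemize}
\item $H^1(\S)$ is oriented by a symplectic basis, and $H^1(Y)$, $H^2(Y)$ are oriented compatibly via Poincar\'e duality;
\item the sequences (\ref{ses-1}) and (\ref{ses-2}) then orient $H^1(U_\a)\oplus H^1(U_\b)\cong T_\bfx\bbT_\a\oplus T_\bfx\bbT_\b$ independently of all choices;
\item finally one corrects by $(-1)^g$.
\end{itemize}
You never specify such a normalization; ``compatible with $Y$ via the handlebody orientation convention'' is not a definition. Fixing the sign ``by comparison in Step 3'' makes agreement with $\gr$ true by fiat. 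Your Step 4 (``invariance follows from agreement with $\gr$'') is then circular as well.

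Also, by \Cref{int-nums}, your sign differs from the intersection sign of $\bbT_\a$ and $\bbT_\b$ by $(-1)^{g(g-1)/2}$. This is harmless for Step 1, but it is exactly the kind of genus-dependent factor an absolute grading must control. The paper's stabilization check works because the $(-1)^{g+1}$ from the change-of-basis matrix, the $(-1)^g$ of \Cref{int-nums}, and the $(-1)^g$ in \Cref{new_gr} cancel.

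\textbf{Step 3 does not fix the constant when $b_1(Y)>0$.} The weighted determinant of the $\a$--$\b$ intersection matrix has, up to sign, coefficients $\chi(\widehat{\HF}(Y,\mathfrak s))$. These all vanish when $b_1(Y)>0$, as does the rank Euler characteristic of the totally twisted complex; Turaev torsion enters only through minors. The per-Spin$^c$ identity $\chi(\HF^+(Y,\mathfrak s))=\pm\tau(Y,\mathfrak s)$ cannot fix a constant shift either:
\begin{itemize}
\item it is unavailable for torsion $\mathfrak s$, where $\HF^+$ is not finitely generated;
\item it can vanish on nonzero groups, e.g.\ $\HF^+(S^3_0(K),\mathfrak s_{g-1})\cong\widehat{\mathit{HFK}}(K,g)\neq 0$ for a genus $g>1$ knot with $\Delta_K=1$;
\item its sign is itself derived in \cite{OS2} using $\gr$.
\end{itemize}
Checking $S^3$ and $\#^k S^1\times S^2$ cannot settle the general case. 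Your Euler characteristic comparison does work for $b_1(Y)=0$, where $\chi(\widehat{\HF}(Y,\mathfrak s))=1$; that is the base case the paper takes from \cite{P1}.

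\textbf{What the paper does for $b_1>0$.} It inducts on $b_1$:
\begin{itemize}
\item realize $Y=X_0$ as $0$-surgery on a nullhomologous knot in $X$, with $b_1(X)=b_1(Y)-1$;
\item use the twisted surgery exact triangle for $(X,X_0,X_1)$, in which $\underline{F_1^+}$ and $\underline{F_3^+}$ preserve $\gr$ and $\underline{F_2^+}$ shifts it;
\item show via \Cref{maslov} that $\ngr$ behaves the same way.
\end{itemize}
The last step reduces to showing $\sign(\bfx)=\e(\bfx)$ on $\bbT_\a\cap\bbT_\g$. This is a determinant computation with the Mayer--Vietoris orientations, using $\d(\psi(\wtb_g^*))=PD^{-1}(\b_g)$ to split $H^2(X_0)$ compatibly. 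This is exactly where the canonical normalization that your proposal lacks gets used.
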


Similarly, the real Heegaard Floer homology groups $\HFR^\circ(Y, \t)$ admit a decomposition by real Spin$^c$-structures on $(Y, \t)$, and each $\HFR^\circ(Y, \t, \mathfrak{s})$ admits various gradings. 
When $\delta(\mathfrak{s}) = 2N$, the corresponding group $\HFR^\circ(Y, \t, \mathfrak{s})$ admits a relative $\bbZ/N$ grading. 
It is a bit harder to define an absolute $\bbZ/2$ grading. The chain complexes $\CF(L_1, L_2)$ in Lagrangian Floer homology can usually be given an absolute $\bbZ/2$ grading by orienting the Lagrangians $L_1$ and $L_2$. 
This cannot always be done in the real setting, as the Lagrangians used to define the real Heegaard Floer complexes are not always orientable.
However, we show that under certain assumptions, the hat version of the real Heegaard Floer groups admits an absolute $\bbZ/2$ grading:

\begin{theorem}
   Let $L$ be an oriented nullhomologous link in a closed oriented three-manifold $X$, and fix an ordering of the link components. 
   Fix a primitive class $S \in H_2(X, L ; \bbZ)$ such that $\partial S = [L]$. 
   Let $Y$ be the double branched cover of $X$ along $L$ determined by the image of $S$ in $H_2(X, L ;\bbZ/2)$, and let $\t$ denote the branching involution. 
   Fix a collection of basepoints $\bfw$ such that there is one basepoint on each component of the fixed point set. 
   Then the hat version of the real Heegaard Floer homology $\widehat{\HFR}(Y, \t, \bfw)$ of $(Y, \t, \bfw)$ admits an absolute $\bbZ/2$ grading $\grh$, depending on the ordering and orientation of $L$, and the choice of primitive class $S$. 
   \label{main-theorem2}
\end{theorem}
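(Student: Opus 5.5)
We will mimic the construction behind Theorem~\ref{main-theorem1}: define a $\bbZ/2$-valued function on generators using only data from a Heegaard diagram, and then check that it is compatible with the differential and invariant under moves.

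\textbf{Setup.} Recall from \cite{GM1} that $\widehat{\HFR}(Y,\t,\bfw)$ is computed from a real Heegaard diagram. Concretely, there is a $\t$-invariant multi-pointed diagram $(\S,\bsa,\bsb,\bfw)$ with $\t(\bsa)=\bsb$, and $\t$ restricts to an orientation-reversing involution of $\S$. The complex is the Lagrangian Floer complex of $\bbT_\a$ and the fixed locus $\bbT_\a^\t\subset\mathrm{Sym}^{g+k-1}(\S)$ of the induced anti-holomorphic involution. Its generators are the $\t$-invariant intersection points $\bfx\in\bbT_\a\cap\bbT_\b$.

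\textbf{Step 1: the grading function.} Pass to the quotient diagram for $(X,L)$. The quotient $\S/\t$ is a surface with boundary the image of the fixed set. The $\a$-curves descend to curves $\bar\a_i$ in $\S/\t$, and fixed points of $\t$ on $\S$ correspond to branch points. A $\t$-invariant generator then pairs up its coordinates, up to the fixed ones, in a way determined by a permutation. I propose to define $\grh(\bfx)$ as the sum of two terms:
\begin{itemize}
\item the sign of that permutation, together with local intersection signs of oriented $\a$ and $\b=\t(\a)$ curves at the relevant points, as in the definition of $\ngr$;
\item a correction term coming from the chosen primitive class $S$.
\end{itemize}

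The nonorientability of $\bbT_\a^\t$ is the reason a naive choice fails. Its first Stiefel--Whitney class pairs with loops through the double branched cover data. The class $S$, with $\partial S=[L]$, supplies a trivialization: a relative $1$-cochain on the quotient diagram whose coboundary cancels $w_1$. Choosing an oriented Seifert surface representative makes this concrete. The ordering and orientation of $L$ fix the sign conventions at the fixed components, which is exactly why the grading depends on them.

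\textbf{Step 2: the differential changes the grading by one.} We must show $\grh(\bfx)-\grh(\bfy)=1$ whenever a Maslov index one real disk connects $\bfx$ to $\bfy$. I would argue via the standard Lagrangian Floer fact: the mod 2 index of a disk equals the difference of local orientation signs of the two Lagrangians at its ends, provided they are oriented along the disk's boundary path. Here the Lagrangian is oriented along the disk boundary using the twisting by $S$. Equivalently, I would show that for any real domain $\phi\in\pi_2(\bfx,\bfy)$, $\mu(\phi)\bmod 2$ equals the difference of the local signs plus the $S$-twist evaluated on $\partial\phi$. This can be computed combinatorially from the doubled domain, in the manner of Lipshitz's formula restricted to invariant domains.

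\textbf{Step 3: independence of choices.} Show that $\grh$ is unchanged on homology under real isotopies, handleslides, stabilizations, and changes of basepoints consistent with $\bfw$. For each move we use the triangle or continuation maps and check parity by the same index computation. Orientation conventions of the curves are shown to be irrelevant up to a global shift that is pinned down via $S$. Finally we check dependence only on the listed data.

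\textbf{Main obstacle.} Step 2 is the main obstacle. We must show that the twisting cocycle built from $S$ exactly cancels the orientation character of $\bbT_\a^\t$ along every loop of generators. The proof of Step 2 reduces this to the identity $w_1(\bbT_\a^\t)=$ (reduction of $S$-data) on $H_1$, which I would verify on a generating set of periodic domains.
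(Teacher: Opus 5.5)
\textbf{There is a genuine gap: the grading is never actually defined, and the mechanism you propose for it is not the one that works.}

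Your argument hinges on a ``twisting cochain'' built from $S$ that cancels the orientation character of the fixed locus. This object is never constructed. The identity you propose to check ($w_1=$ ``reduction of $S$-data'') is not well posed as stated, since the coboundary of a $1$-cochain lives in degree two while $w_1$ has degree one. (Also, the second Lagrangian is the fixed set $M^R$ of the induced involution $R$ on $\mathrm{Sym}^m(\S)$, not a subset of $\bbT_\a$.)

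The fixed set has two sources of nonorientability. One is the strata $\mathrm{Sym}^k(\S/\t)$ when $\S/\t$ is nonorientable. The other is the factors $\mathrm{Sym}^n(C_j)$ for $n$ even: pushing a point across $w_j$ cyclically reorders the $n$ points and changes the orientation by $(-1)^{n-1}$. The paper removes both with no twisting at all.
\begin{enumerate}[(a)]
\item It uses only real diagrams built from a free Seifert surface $\S'$ with $[\S']=S$, so $\S/\t=\S'$ is connected and oriented by $L$.
\item It deletes the basepoint divisor, replacing $M^R$ by $\widehat{M^R}=M^R\setminus(\bfw\times\mathrm{Sym}^{m-1}(\S))$. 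The strata $\mathrm{Sym}^k(\S')\times\mathrm{Sym}^{m-2k}(\widehat C)$ are then oriented by the orientation and ordering of $L$.
\end{enumerate}
This is exactly where the hat hypothesis enters. Disks counted in $\widehat{\CFR}$ have $n_{\bfw}=0$ and so stay in $\widehat M$. Hence $\grh(\bfx)$ is just the sign of $\bfx$ in $\bbT_\a\cap\widehat{M^R}$, and compatibility with the differential is the standard relation $(-1)^{\mu(\phi)}=\sign(\bfx)\sign(\bfy)$, with no periodic-domain verification needed.

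Nothing in your plan uses the basepoints or the condition $n_{\bfw}=0$. Your Step~2 identity for \emph{all} real domains would need a correction governed by $\bfw$, not by $S$: the transported orientation jumps by $(-1)^{n_j-1}$ each time $\partial\phi$ crosses $w_j$ with $n_j$ coordinates on $C_j$. Also, if your first term is the sign of $\bfx$ in $\bbT_\a\cap\bbT_\b$ ``as in $\ngr$'', it carries no information here. The double of a real disk has even Maslov index, so that sign is constant along real disks. The relevant sign is (\ref{int-sign}). It uses $\a\cdot\b$ only at paired points in $\S^+$, the signs $\a\cdot C$ at fixed points, and the order of the fixed points along the oriented components of $C$ starting at $\bfw$.

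Two further ingredients are missing. First, an \emph{absolute} grading requires a canonical orientation of $\bbT_\a$, and ``a global shift pinned down via $S$'' does not supply one. The paper orients $H_1(U_\a,\bfw;\bbR)=H_1(X-\S',\bfw)$ through the exact sequence of the triple $(X,X-\S',\bfw)$. It uses $H_2(X,X-\S')\cong H^1(\S')$, an orientation of $H_1(\S')$ obtained by capping $\S'$ off to a closed surface and declaring symplectic bases positive, and the ordering of $L$ to orient $\ker(H_0(\bfw)\to H_0(X))$.

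Second, because $\grh$ is only defined on diagrams adapted to $S$, invariance under isotopies, handleslides and stabilizations is not enough. You must show that any two adapted diagrams are connected through adapted ones. The paper proves that free Seifert surfaces in the same class in $H_2(X,L;\bbZ)$ are related by local stabilizations through free surfaces. It combines this with Theorem~1 of \cite{N1}: real splittings inducing the same framing on $C$ are related by free stabilizations and diffeomorphisms. As a result, only real isotopies, real handleslides and \emph{orientable} free stabilizations are needed. A nonorientable free stabilization would take you outside the class of diagrams on which $\grh$ is defined.
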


\begin{remark} 
Any real three-manifold $(Y, \t)$ is the double branched cover of the quotient $X = Y /\t$ along the projection $L \subset X$ of the fixed set. 
The link $L$ is nullhomologous in $H_2(X, L ;\bbZ/2)$.
\Cref{main-theorem2} then says that when this link $L$ is nullhomologous in $H_2(X, L ; \bbZ)$, the hat version of the real Heegaard Floer homology of the real three-manifold $(Y, \t)$ admits an absolute $\bbZ/2$ grading. 
\end{remark}

The grading in \Cref{main-theorem2} is constructed using a pointed real Heegaard diagram $(\S, \bsa, \bsb, \mathbf{w}, \t)$ for $(Y, \t)$ such that the quotient $\S/\t$ is a connected Seifert surface of $L$. 
In this case, removing a basepoint from each component of the fixed point set $C$ in $Y$ allows both Lagrangians to be oriented, and the ordering and orientations of the link components specifies natural orientations. 
Any two such Heegaard diagrams are connected by certain real Heegaard moves which preserve this grading. 

As described in Section 7 of \cite{GM1}, there are two ways to consider the Euler characteristic of $\widehat{\HFR}(Y, \t, \bfw,\mathfrak{s})$. In the real case, the decomposition by real Spin$^c$-structures admits a refinement into relative real Spin$^c$-structures. 
If a pair of intersection points $\bfx$ and $\bfy$ determine the same real Spin$^c$-structure $\mathfrak{s}$, where $\d(\mathfrak{s})$ is divisible by four, and $\bfx$ and $\bfy$ belong to the same relative real Spin$^c$-structure, then the real Maslov index determines a relative $\bbZ/2$ grading between these two points. 
This relative $\bbZ/2$ grading allows us to compute the Euler characteristic $\c(\widehat{\HFR}(Y, \t, \mathbf{w}, \mathfrak{s}))$ up to sign. 
Alternatively, when the Lagrangians can be oriented, comparing signs of intersection points gives rise to another relative $\bbZ/2$ grading. 
We let $\widehat{\c}(\widehat{\HFR}(Y, \t, \mathbf{w}, \mathfrak{s}))$ denote the Euler characteristic with respect to this grading. 
Since this relative grading is determined across all real Spin$^c$-structures, we can also compute $\widehat{\c}(\widehat{\HFR}(Y, \t, \bfw))$. 
Again, this is only defined up to sign. As Example 7.4 of \cite{GM1} illustrates, these Euler characteristics $\chi$ and $\widehat{\chi}$ need not agree. 

Since the grading in \Cref{main-theorem2} is constructed by fixing orientations of the Lagrangians, we will only be considering the second Euler characteristic $\widehat{\c}$. 
In this setting, the absolute $\bbZ/2$ grading allows us to pin down the sign of the Euler characteristic  $\widehat{\c}(\widehat{\HFR}(Y, \t, \bfw))$. 
We may also fix the sign of the hat version of the Euler characteristic in each real Spin$^c$-structure. 
These signs turn out to be independent of the ordering of the link components.

\begin{theorem}
    Let $L \subset X$ be an oriented nullhomologous link in a closed oriented three-manifold. 
    Fix a primitive class $S \in H_2(X, L ; \bbZ)$ such that $\partial S = [L]$, and let $(Y, \t)$ be the double branched cover of $X$ along $L$ determined by $S$ with branching involution $\t$. 
    Fix a collection of basepoints $\bfw$ such that there is one basepoint on each component of the fixed point set.
    Then for each real Spin$^c$-structure $\mathfrak{s}$, the sign of the hat version of the Euler characteristic $\widehat{\c}(\widehat{\HFR}(Y, \t, \bfw,\mathfrak{s}))$ depends only on the oriented link $L$ and the primitive homology class $S$. 
    It is also unchanged under simultaneously replacing $L$ with its reverse $rL$ and $S$ with $-S$. 
    \label{main-theorem3}
\end{theorem}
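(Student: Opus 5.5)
The plan is to derive \Cref{main-theorem3} from \Cref{main-theorem2}. By that theorem the absolute grading $\grh$ depends only on the ordering and orientation of $L$ and on $S$. So the signed Euler characteristic $\widehat{\c}(\widehat{\HFR}(Y,\t,\bfw,\mathfrak{s}))$, computed as the signed count of intersection points in $\mathfrak{s}$ with respect to $\grh$, is well defined once these data are fixed. Two things remain to check:
\begin{enumerate}[(i)]
\item changing the ordering of the link components does not change $\grh$ within any single real Spin$^c$-structure, up to one global sign that we can show is trivial;
\item replacing $(L,S)$ by $(rL,-S)$ leaves the grading unchanged, or changes it uniformly in a way that cancels.
\end{enumerate}

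We work with a real Heegaard diagram $(\S,\bsa,\bsb,\bfw,\t)$ whose quotient $\S/\t$ is a connected Seifert surface for $L$ representing $S$. The grading of an intersection point $\bfx$ is the product of local intersection signs of the oriented Lagrangians $\bbT_\a^\t$ and $\bbT_\b^\t$ at $\bfx$, where the orientations are built from the ordering and orientation of $L$.

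For (i), I would write down explicitly how the orientation of each Lagrangian depends on the ordering. Presumably the orientation of each real Lagrangian is a product of orientations of factors, one for each component of the fixed set minus its basepoint, wedged together in the chosen order. A transposition of two components $K_i,K_j$ then multiplies the orientation of $\bbT_\a^\t$ by $(-1)^{a_ia_j}$ and that of $\bbT_\b^\t$ by $(-1)^{b_ib_j}$, where $a_i,b_i$ are the dimensions of the corresponding factors. The key point is that $a_i$ and $b_i$ are determined by the diagram near $K_i$ and agree mod $2$. The number of $\a$-arcs and $\b$-arcs crossing a fixed-set component match, since both come from a Heegaard splitting of the quotient compatible with $L$. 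It follows that the two orientation changes cancel in the intersection sign, or at worst contribute a global sign. A global sign would then be pinned down by comparing with a model diagram, for example a stabilization, where the grading is computed directly. Because everything happens at the level of orientations, this argument is uniform over all $\bfx$, so it handles each $\mathfrak{s}$ separately.

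For (ii), reversing every component of $L$ and replacing $S$ by $-S$ reverses the orientation of the Seifert surface $\S/\t$ while keeping it as the same unoriented surface. Since the Heegaard diagram is essentially unchanged, the same diagram serves for both data. Reversing the orientation of $\S/\t$, together with the orientations of the fixed arcs, changes the orientation of each real Lagrangian by a sign depending only on its dimension, and these signs agree for $\bbT_\a^\t$ and $\bbT_\b^\t$. The signs therefore cancel in every local intersection number. One possible subtlety is that the identification of the symplectic orientation of $\mathrm{Sym}$ of the fixed locus might also flip. That would contribute an overall factor $(-1)^{\binom{n}{2}}$-type, which is again uniform and should be checked against the same model computation.

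The main obstacle will be the bookkeeping in (i): making precise which orientation convention from \Cref{main-theorem2} depends on the ordering, and verifying the mod $2$ parity equality that makes the transposition signs cancel. If that parity statement fails in general, the fallback is to show that the discrepancy is a constant sign. That sign would be detected by the total Euler characteristic, which by the paper's later identification with $\Delta_L(i)$-type data is ordering-independent. That is circular, though, so I would prefer the direct parity argument.
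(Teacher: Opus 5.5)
There is a genuine gap in step (i), and step (ii) depends on it.

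\textbf{Step (i).} First, the grading is not an intersection sign of two Lagrangians $\bbT_\a^\t$, $\bbT_\b^\t$. It is the sign of $\bfx$ as an intersection point of $\bbT_\a$ with $\widehat{M^R}$, the fixed locus of $R$ on $\text{Sym}^m(\S)$ with the basepoint divisor removed. The canonical orientation of $\bbT_\a$ does not depend on the ordering at all, so everything rests on $\widehat{M^R}$. This is disconnected: its components are $\text{Sym}^k(\S')\times(I_1^{n_1}\times\cdots\times I_l^{n_l})/\sim$, indexed by how many points of the generator lie on each fixed circle. Exchanging $I_i$ and $I_j$ reverses the orientation of such a component by $(-1)^{n_in_j+(n_i+n_j)\sum_{i<k<j}n_k}$, and this varies from component to component. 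For two components it is $(-1)^{n_1n_2}$: it flips generators with $n_1,n_2$ odd and fixes those with $n_1,n_2$ even, and both kinds already occur for the Hopf link of \Cref{hopf-link-ex}. So the grading itself really does change under reordering, and not uniformly. There is no parity identity that makes the change cancel, and no global sign to calibrate against a model diagram. Your fallback fails for the same reason.

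\textbf{Step (ii).} The same problem appears here. Reverse $\S'$ and all fixed arcs while keeping the ordering fixed. The orientation of a component of $\widehat{M^R}$ then changes by a sign that depends on $(k;n_1,\dots,n_l)$, not only on the dimension. It becomes the uniform sign $(-1)^{h+l-1}$, with $h$ the genus of $\S'$, only if you also reverse the ordering; that sign is then cancelled by the change in the canonical orientation of $\bbT_\a$. So what holds exactly is that $(L,S)$ and $(rL,-S)$ with reversed ordering give the same grading. The statement as written still needs ordering-independence.

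\textbf{Missing idea.} Ordering-independence holds only for the Euler characteristics, and the reason is cancellation in pairs. Suppose a component's orientation flips. Then some $n_p$ with $p\ge 2$ must be odd:
\begin{enumerate}[(a)]
\item $n_i$ and $n_j$ cannot both be even;
\item if $n_j$ is odd, take $p=j$;
\item if $n_j$ is even and $i>1$, take $p=i$;
\item if $n_j$ is even and $i=1$, the sign reduces to $(-1)^{\sum_{1<k<j}n_k}$, so some intermediate $n_k$ is odd.
\end{enumerate}
The small alpha curve around $w_p$ meets $C_p$ in exactly two points $p^\pm$ of opposite sign, and these are the first and last points of $C_p\cap\bsa$. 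So a generator on that component has $c^p_1$ or $c^p_{n_p}$ on this curve. Swapping that point for the other one moves it past $n_p-1$ points, an even number, and flips its local sign. The result is a generator on the same component, in the same real Spin$^c$ structure, with the opposite sign. Hence the generators whose grading flips contribute zero to each $\widehat{\c}(\widehat{\HFR}(Y,\t,\bfw,\mathfrak{s}))$. This is what makes the sign depend only on $(L,S)$, and combined with the reversed-ordering statement it also gives the $(rL,-S)$ claim.
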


We then specialize further to the case of double branched covers $\S_2(L)$ of links $L$ in $S^3$, equipped with the branching involution $\t_L$. 
Each Spin$^c$-structure on $\S_2(L)$ has a unique real Spin$^c$-structure, so the decomposition by real $\text{Spin}^c$-structures can simply be written as 
\[\widehat{\HFR}(\S_2(L), \t_L, \bfw) = \bigoplus_{\mathfrak{s} \in \text{Spin}^c(\S_2(L))}\widehat{\HFR}(\S_2(L), \t_L, \bfw, \mathfrak{s}).\]
For each $\mathfrak{s} \in \text{Spin}^c(\S_2(L))$, let $\cs(L)$ denote the hat version of the Euler characteristic in that $\text{Spin}^c$-structure.
In \cite{GM1}, Guth and Manolescu define the Heegaard Floer analogue of Miyazawa's degree invariant for knots $K$ (see \cite{Miy1}) as 
\[\ctot(K) = \lvert \widehat{\c}(\widehat{\HFR}(\S_2(K), \t_K, \bfw))\rvert.\]
Because $\cs(K)$ is a priori only well-defined up to sign, Guth and Manolescu fix the signs globally so that 
\[\ctot(K) = \sum_{\mathfrak{s} \in \text{Spin}^c(\S_2(K))}\cs(K).\]
Using the absolute $\bbZ/2$ grading $\grh$, we define $\bbZ$-valued invariants of links $L$, which are the appropriate signed analogues of Miyazawa's invariant. 
As a remark, our definitions of $\ctot$ and $\cs$ differ from the one in \cite{GM1} by a sign.

\begin{definition}
    Let $L \subset S^3$ be a link, and let $(\S_2(L), \t_L)$ be the double branched cover of $S^3$ over $L$ along with the branching involution. Define 
    \[\ctot(L) = \widehat{\c}(\widehat{\HFR}(\S_2(L), \t_L, \bfw )),\]
    and for each $\mathfrak{s \in }\text{Spin}^c(\S_2(L))$, define  
    \[\cs(L) = \widehat{\c}(\widehat{\HFR}(\S_2(L), \t_L, \bfw, \mathfrak{s})),\]
    where $\widehat{\HFR}(\S_2(L), \t_L)$ is given the grading $\grh$ from \Cref{main-theorem2}.
    \label[definition]{deg-def}
\end{definition} 

We show that $\ctot$ can actually be expressed in terms of the Alexander polynomial:

\begin{proposition}
    Let $L \subset S^3$ be an $l$-component link in $S^3$. Then 
    \[\ctot(L) = 2^{l-1}\D_{L}(i, \ldots, i).\] 
    Here, $\D_L$ denotes the \mycomment{symmetrized Alexander polynomial, and $A_L$ denotes the}multivariate Alexander polynomial. In particular, for a knot $K \subset S^3$, we have $\ctot(K) = \D_K(i).$
    \label[proposition]{alex-poly}
\end{proposition}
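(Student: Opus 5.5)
Since $\ctot(L)$ is the Euler characteristic of the chain complex, I will compute it at chain level from a convenient real Heegaard diagram. Take a connected Seifert surface $F$ for $L$ and the real Heegaard diagram $(\S,\bsa,\bsb,\bfw,\t)$ with $\S/\t = F$, as in the construction behind \Cref{main-theorem2}. Following Guth--Manolescu, the complex $\widehat{\CFR}$ is the Lagrangian Floer complex of two half-dimensional real Lagrangians in $\mathrm{Sym}^{g}(F)$ (or of the fixed sets of $\t$ on $\bbT_\a,\bbT_\b$). Its generators are $\t$-invariant intersection points, and these correspond to intersection points of curve systems on the quotient surface $F$. With the orientations fixed by the ordering and orientation of $L$, the grading $\grh$ of a generator is its local intersection sign. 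Hence
\[\ctot(L)=\sum_{\bfx}\varepsilon(\bfx),\]
a signed count of real generators. This count is a determinant of an intersection matrix between the two curve systems on $F$.

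The next step is to identify this matrix with a Seifert-type matrix. I will choose $F$ as a disk with $2h$ bands, so $h = g(F)$ and $\mathrm{rk}\, H_1(F) = 2h + l - 1$. I will also choose the $\a$ and $\b$ arcs in $F$ as in the standard construction of double branched cover diagrams from a Seifert surface: $\a$ curves are cocores of the bands, and $\b$ curves are their pushoffs through the complement. The $\b$ curves then project to arcs whose intersections with the $\a$ arcs are recorded by the Seifert form. For a Seifert matrix $V$, the algebraic intersection matrix should come out to be a combination $aV + bV^T$ with $a,b$ units in $\bbZ[i]$, up to a correction that accounts for the basepoints. Concretely, I expect the signed count to equal $\det(V - iV^T)$ up to a power of $i$ and a sign, or $\det(V + V^T)$ twisted by $i$. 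This step is the crux. I would first test it on the unknot and the trefoil: for the trefoil, $\D(i) = i^{-1} - 1 + i = -1$, which is consistent with $|\det| = 1$ for a double branched cover $L(3,1)$ counted with signs $2 - 1$.

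The third step is to relate this to the Alexander polynomial. Recall $\D_L(t) \doteq \det(t^{1/2}V - t^{-1/2}V^T)$, and for a link the single-variable polynomial equals $(t-1)\D_L(t,\dots,t)$ up to normalization. At $t = i$ the factor $(t^{1/2} - t^{-1/2})$ gives $\sqrt2$-type constants. The $l-1$ extra generators of $H_1(F)$ coming from the boundary components are what produce the factor $2^{l-1}$: they contribute $\pm 2$ each. So I would verify $\det(i^{1/2}V - i^{-1/2}V^T) = 2^{l-1}\D_L(i,\dots,i)$ using the Torres formula $\D_L(t,\dots,t)(t-1) = \D_L(t)$ with the conventional normalization. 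Then I would check that the number of bands gives the sign and power-of-$i$ normalization matching the symmetric Conway normalization. Real-valuedness of $\D_L(i,\dots,i)$ under the symmetric normalization ensures that the phases cancel.

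The main obstacle is the sign bookkeeping. The absolute sign convention of $\grh$, which comes from orienting the Lagrangians via the ordering and orientation of $L$, must match the Conway normalization exactly rather than up to sign. I would handle this by invariance: both sides are invariants by \Cref{main-theorem3}. Then I would fix the overall sign in the determinant formula by computing the unknot, where $\ctot = 1$ since there is one generator in even degree, and the $l$-component unlink/Hopf-type base cases, and by checking that stabilizing a band changes both sides identically.
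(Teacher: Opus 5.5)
Your route differs from the paper's: you aim for a closed chain-level formula read off a Seifert-surface diagram. As written it has a genuine gap at exactly the step you call the crux, and the mechanism you propose there is not the right one.

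The complex $\widehat{\CFR}$ is the Floer complex of $\bbT_\a$ and $M^R$ in $\mathrm{Sym}^m(\S)$, not of two Lagrangians in a symmetric product of $F$. Its generators are also not bijections between two curve systems, which is what makes $\chi(\widehat{\CF})$ a determinant in the ordinary case. Since $\b_a=\t(\a_a)$, a generator $\bfx=\{z_1,\t(z_1),\ldots,z_k,\t(z_k),c_1,\ldots,c_{m-2k}\}$ does one of two things to each alpha curve:
\begin{itemize}
\item it pairs alpha curves off through points $z_i\in\a_{r(i)}\cap\b_{\s(r(i))}\cap\S^+$, or
\item it sends an alpha curve to the fixed set through $c_j\in\a_{s(j)}\cap C$.
\end{itemize}
By (\ref{int-sign}), its sign is the sign of the permutation $(r(1),\s(r(1)),\ldots,s(1),\ldots,s(m-2k))$, with the $c_j$ taken in their order along $C$, times local signs. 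Summing therefore gives Pfaffian-type combinatorics (partial matchings plus ordered boundary points), not a determinant.

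The proposed form of the matrix also cannot be right. An intersection matrix has integer entries, so $i$ cannot enter through coefficients $a,b$ in $aV+bV^T$. With $a,b\in\{\pm1\}$ you only reach $\det(V\pm V^T)$, i.e.\ $\D_L(\mp1)$ up to a unit, which is the wrong evaluation: for $4_1$, $\det(V+V^T)=\pm5$, while $\ctot(4_1)=\D_{4_1}(i)=3$. And since $\det(V-tV^T)\doteq\D_L(t)$ anyway, expecting the count to equal $\det(V-iV^T)$ up to a unit is essentially the proposition itself. No argument for it is given.

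The later steps do not repair this. By the Torres relation you cite, $\det(i^{1/2}V-i^{-1/2}V^T)=\D_L(i)$ has modulus $\sqrt2\,|\D_L(i,\ldots,i)|$ for $l\ge2$. So the identity $\det(i^{1/2}V-i^{-1/2}V^T)=2^{l-1}\D_L(i,\ldots,i)$ that you plan to verify already fails for the Hopf link: the left side has modulus $\sqrt2$, the right side is $\pm2$. The claim that each boundary component contributes $\pm2$ is unsupported.

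Note also that the paper converts via $(t^{-1/2}-t^{1/2})^{l-1}\D_L(t,\ldots,t)=\D_L(t)$. This agrees with your power-one Torres relation only for $l\le2$. For the three-component chain, $\D_L(t_1,t_2,t_3)\doteq t_2-1$, so $|4\D_L(i,i,i)|=4\sqrt2$ is not an integer. For $l\ge3$ you would first have to fix which quantity the right-hand side denotes.

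Most importantly, $|\ctot|$ is already Guth--Manolescu's invariant, so the real content of the statement is the sign. It cannot be pinned down ``by invariance'': two invariants that agree up to sign on all links and exactly on a few base cases need not agree. Band stabilization only compares Seifert surfaces of the \emph{same} link. To fix the sign by your route, you would have to carry the orientations of $\bbT_\a$ (coming from (\ref{triple-les})) and of $\widehat{M^R}$ exactly through a chain-level formula.

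The paper avoids all of this. It checks $\ctot(U)=1$ and proves the skein relations (\ref{skein-reln-1}) and (\ref{skein-reln-2}) directly. It takes braid-closure diagrams for $L_+,L_-,L_0$ that agree away from the changed crossing, matches their generators, and compares signs via (\ref{int-sign}). The sign then propagates from the unknot through the skein relations.
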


The structure of this paper is as follows: In Section 2, we present the alternate definition of the $\bbZ/2$ grading for usual Heegaard Floer homology and prove \Cref{main-theorem1}. 
Section 3 introduces the absolute $\bbZ/$2 grading for $\widehat{\HFR}$ and shows that it is well-defined. 
Finally, Section 4 looks at the Euler characteristic of $\widehat{\HFR}$. 

\subsection*{Acknowledgements} 
I would like to thank Ciprian Manolescu for many helpful conversations and continued support.
I am also grateful to Ciprian Bonciocat, Gary Guth, and Judson Kuhrman for useful discussions, and to Mohammed Abouzaid for the formulation of the grading in Section 2. 
Finally, I am thankful for the support provided by the Simons Collaboration Grant on New Structures in Low-Dimensional Topology. 

\section{The absolute \texorpdfstring{$\bbZ/2$}{Z/2} grading for Heegaard Floer homology}

\subsection{The definition of the grading}
We start by presenting a definition of a $\bbZ/2$ grading on the Heegaard Floer homology groups of a closed oriented three-manifold $Y$ in terms of the cohomological data of an associated Heegaard diagram. 
As a remark, this grading is the same as the one defined in \cite{P1}, although it is presented in a slightly different manner. 
Furthermore, this grading differs from the one in \cite{FJR1} by a factor of $(-1)^{b_1(Y)}$. Let $(\S, \boldsymbol{\a}, \boldsymbol{\b}, z)$ be a pointed genus $g$ Heegaard diagram for $Y$, and assume that the corresponding tori $\mathbb{T}_\a$ and $\mathbb{T}_\b$ intersect transversely in $\text{Sym}^g(\S)$. 
Let $U_\a$ and $U_\b$ denote the handlebodies specified by the alpha and beta curves, respectively. 
Consider the Mayer-Vietoris sequence in cohomology with coefficients in $\bbR$ for $(Y, U_\a, U_\b, \S)$: 
\[0 \longrightarrow H^1(Y ) \xrightarrow[]{(i_\a^*, i_\b^*)} H^1(U_\a) \oplus H^1(U_\b ) \xrightarrow[]{j_\a^* - j_\b^*} H^1(\S ) \xrightarrow[]{\hspace{.15cm}\delta\hspace{.15cm}} H^2(Y ) \longrightarrow 0.
\]
This breaks up into the following short exact sequences:
\begin{equation} 0  \longrightarrow \text{Im}(j_\a^* - j_\b^*) \longrightarrow H^1(\S ) \longrightarrow H^2(Y ) \longrightarrow 0
\label{ses-1}
\end{equation}
and
\begin{equation} 0 \longrightarrow H^1(Y ) \longrightarrow H^1(U_\a ) \oplus H^1(U_\b ) \longrightarrow \text{Im}(j_\a^* - j_\b^*) \longrightarrow 0.
\label{ses-2}
\end{equation}

Given such a short exact sequence of vector spaces, an orientation on any two of the spaces determines an orientation on the third. 
Our convention will be to fix an isomorphism identifying the middle vector space with the direct sum of the left and right vector spaces, in that order, such that this isomorphism also identifies the short exact sequence with the trivial one. 
We then require our vector spaces to be oriented so that this isomorphism is orientation preserving. Order and orient the alpha and beta curves arbitrarily. 
Then $H^1(U_\a )$ is generated by the algebraic duals $\{\wta_i^*\}$ of $g$ simple closed curves $\{\wta_i\}$ satisfying $\a_i \cdot \wta_j = \delta_{ij}$, and similarly for $H^1(U_\b )$.  

We orient $H^1(\S)$ by declaring that the symplectic basis $\langle \a_1^*, \wta_1^*, \ldots, \a_g^*, \wta_g^*\rangle$ with respect to the symplectic form given by the intersection pairing is positively oriented. 
To orient $H^1(Y )$ and $H^2(Y )$, fix an arbitrary orientation on $H^1(Y )$. 
Poincar\'e duality and the universal coefficients theorem yield isomorphisms $H^1(Y ) \cong H_2(Y ) \cong H^2(Y )$, and we let the image of a positively oriented basis of $H^1(Y )$ under these isomorphisms be a positively oriented basis of $H^2(Y )$. 
This induces an orientation of $\text{Im}(j_\a^* - j_\b^*)$ using (\ref{ses-1}), which in turn induces an orientation of $H^1(U_\a ) \oplus H^1(U_\b )$ using (\ref{ses-2}).

\begin{lemma}
    The orientation of $H^1(U_\a ) \oplus H^1(U_\b )$ is independent of the ordering and orientations of the alpha and beta curves, and of the orientation on $H^1(Y )$.
\end{lemma}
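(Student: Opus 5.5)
The orientation on $H^1(U_\a)\oplus H^1(U_\b)$ is built in two stages. First, (\ref{ses-1}) and the chosen orientations of $H^1(\S)$ and $H^2(Y)$ give an orientation of $\text{Im}(j_\a^*-j_\b^*)$. Second, (\ref{ses-2}) and the orientations of $H^1(Y)$ and $\text{Im}(j_\a^*-j_\b^*)$ give the orientation of the middle term. The data enter only through two inputs: the orientation of $H^1(\S)$, and the orientation of $H^1(Y)$, which also fixes that of $H^2(Y)$. The maps in the Mayer–Vietoris sequence are canonical. The plan is to show the first input is independent of the ordering and orientations of the curves, and that flipping the second input does not affect the final answer.

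I will use two elementary facts about our convention for a short exact sequence $0\to A\to B\to C\to 0$, where $B$ is identified with $A\oplus C$ in that order. First, if the orientation of $B$ is fixed and that of $C$ is reversed, then the induced orientation of $A$ is reversed. Second, if the orientations of both $A$ and $C$ are reversed, then the induced orientation of $B$ is unchanged. Both hold because reversing one summand of $A\oplus C$ reverses the orientation of the direct sum, while reversing both summands preserves it.

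The step needing the most care is the orientation of $H^1(\S)$. It is defined using the basis $\langle \a_1^*,\wta_1^*,\ldots,\a_g^*,\wta_g^*\rangle$, which depends on the ordering and orientations of the $\a_i$ and on the choice of the duals $\wta_i$. I will show it is the canonical symplectic orientation. The basis is symplectic for the intersection form $\omega$ on $H^1(\S)$, with $\omega(\a_i^*,\wta_i^*)$ equal to a fixed sign $\epsilon$ independent of $i$ and of all choices, since it is determined by the convention $\a_i\cdot\wta_i=1$. Hence $\omega^{\wedge g}$ evaluates on this basis to $g!\,\epsilon^g$. Any two bases of this form therefore induce the same orientation, namely the one in which $\epsilon^g\omega^{\wedge g}$ is positive.

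This can also be checked by hand. Reordering the $\a_i$ permutes the pairs $(\a_i^*,\wta_i^*)$ as blocks of size two, which is an even permutation of the basis. Reversing $\a_i$ forces reversing $\wta_i$ to keep $\a_i\cdot\wta_j=\delta_{ij}$, which gives two sign changes. Changing the duals $\wta_i$ replaces the basis by another symplectic basis. The $\beta$ curves do not enter the orientation of $H^1(\S)$ at all, so their ordering and orientations are irrelevant. This settles independence from the curve data.

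For the orientation of $H^1(Y)$: reversing it reverses the orientation of $H^2(Y)$, because the identification $H^1(Y)\cong H_2(Y)\cong H^2(Y)$ is a fixed linear isomorphism. In (\ref{ses-1}) the middle term $H^1(\S)$ is unchanged and the right term is reversed, so by the first fact the orientation of $\text{Im}(j_\a^*-j_\b^*)$ is reversed. In (\ref{ses-2}) both outer terms $H^1(Y)$ and $\text{Im}(j_\a^*-j_\b^*)$ are now reversed, so by the second fact the orientation of $H^1(U_\a)\oplus H^1(U_\b)$ is unchanged. Combining this with the previous paragraph proves the lemma.
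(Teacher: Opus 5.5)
Your proof is correct and follows essentially the same route as the paper. The paper also shows that the orientation of $H^1(\S)$ is unaffected by reordering or reorienting the curves, using exactly your by-hand check: the paired sign flip, and the determinant $\text{sgn}(\rho)^2=1$ for a reordering. Your $\omega^{\wedge g}$ remark is a more conceptual packaging of that check which also covers the choice of the $\wta_i$. The $H^1(Y)$ step is identical to the paper's: reversing $H^1(Y)$ reverses $H^2(Y)$ and hence $\text{Im}(j_\a^*-j_\b^*)$, so both outer terms of the second sequence flip and the orientation of $H^1(U_\a)\oplus H^1(U_\b)$ is unchanged.
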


\begin{proof}
    Fix arbitrary orderings and orientations of the alpha and beta curves. Denote these $\{\a_1, \ldots, \a_g\}$ and $ \{\b_1, \ldots, \b_g\}$. 
    Without loss of generality, change the orientation on $\a_1$. Replacing $\a_1$ with $-\a_1$ replaces the basis $\langle \a_1^*, \wta_1^*, \ldots, \a_g^*, \wta_g^* \rangle$ of $H^1(\S )$ with $\langle -\a_1^*, -\wta_1^*, \ldots, \a_g^*, \wta_g^* \rangle$.
    These clearly determine the same orientation of $H^1(\S)$. Similarly, let $\r$ be a  permutation of $\{1,\ldots,g\}$ and consider another ordering $\{\a_{\r(1)}, \ldots, \a_{\r(g)}\}$ of the alpha curves. 
    Then the change of basis matrix between the corresponding bases of $H^1(\S )$ has determinant $\text{sgn}(\r)^2 = 1$. 
    Changing the ordering and orientations of the beta curves does not affect anything either.

    Changing the orientation on $H^1(Y )$ changes the orientation on $H^2(Y )$, and thus also the orientation on $\text{Im}(j_\a^* - j_\b^*)$ obtained using (\ref{ses-1}). 
    So, we obtain the same orientation on $H^1(U_\a ) \oplus H^1(U_\b )$ using (\ref{ses-2}). 
\end{proof}

There is a canonical isomorphism $H^1(U_\a ) \oplus H^1(U_\b ) \to T_\mathbf{x}\bbT_\a \oplus T_\mathbf{x}\bbT_\b $ for each $\mathbf{x} \in \bbT_\a \cap \bbT_\b$. 
We can describe this map after fixing orientations and orderings of the alpha and beta curves: Write $\bfx = \{x_1, \ldots, x_g\}$, where $x_i \in \a_{i} \cap \b_{\s(i)}$. 
Let $v_i \in T_{x_i}\a_i$ and $w_i \in T_{x_i}\b_{\s(i)}$ denote positive tangent vectors with respect to the orientations of the alpha and beta curves. 
Then the generators $\wta_i^*$ and $\wtb_i^*$ get mapped to the vectors $v_i$ and $w_i$, respectively. 
Using this isomorphism, we obtain an orientation of $T_\mathbf{x}\bbT_\a \oplus T_\mathbf{x}\bbT_\b$ for each $\mathbf{x} \in T_\mathbf{x} \bbT_\a \cap T_\mathbf{x} \bbT_\b$. 

\begin{definition}
For each $\mathbf{x} \in T_\mathbf{x} \bbT_\a \cap T_\mathbf{x} \bbT_\b$, define a sign $\sign(\mathbf{x}) \in \{\pm 1\}$ which is $1$ whenever the orientation on $T_\mathbf{x}\text{Sym}^g(\S)$ induced by the orientation on $\S$ agrees with the one induced by the orientation on $T_\mathbf{x}\mathbb{T}_\a \oplus T_\mathbf{x}\mathbb{T}_\b$ constructed above. Define a function $\ngr : \bbT_\a \cap \bbT_\b \to \bbZ/2$ by  
\[\ngr(\mathbf{x}) = 0\]
if and only if $(-1)^g \cdot \sign(\mathbf{x}) = 1$.
\label[definition]{new_gr}
\end{definition}

\begin{remark}
    The $(-1)^g$ factor ensures that the grading remains invariant under stabilizations. 
    The need for such a correction arises from the fact that the map $j^*_\a - j^*_\b: H^1(U_\a) \oplus H^1(U_\b ) \to H^1(\S)$ introduces a minus sign when mapping each $\wtb_{i}^*$ to its restriction in $H^1(\S)$.  
\end{remark}

This determines a $\bbZ/2$-valued function on the chain complexes $\CF^\circ(\bbT_\a, \bbT_\b)$, and this function turns out to be a lift of the usual relative $\bbZ/2$ grading defined using the Maslov index. 
To see this, we note the following properties of the Maslov index and local intersection numbers:

\begin{lemma} 
    Let $(\S, \bsa, \bsb, \bsg, z)$ be a pointed Heegaard triple. Fix orderings and orientations of the alpha, beta, and gamma curves, and a point $\boldsymbol{\Theta} \in \bbT_\b \cap \bbT_\g$. 
    For an intersection point $\bfx$, let $\e(\bfx)$ denote the local intersection number with respect to the obvious product orientations on $\bbT_\a, \bbT_\b$, and $\bbT_\g$. 
    Suppose $\e(\boldsymbol{\Theta}) = 1$. For points $\bfx, \bfy \in \bbT_\a \cap \bbT_\b$ and a holomorphic strip $\phi \in \pi_2(\bfx, \bfy)$, 
    \[(-1)^{\m(\phi)} = \e(\bfx) \e(\bfy).\]
    In particular, a chain map defined by counting strips with Maslov index zero preserves $\ngr$. 
    Similarly, for points $\bfx \in \bbT_\a\cap \bbT_\b$ and $\bfy \in \bbT_\a \cap \bbT_\g$, and a holomorphic triangle $\phi \in \pi_2(\bfx, \boldsymbol{\Theta}, \bfy)$, 
    \[(-1)^{\m(\phi)} = \e(\bfx)\e(\boldsymbol{\Theta})\e(\bfy).\]
    In particular, a triangle map determined by counting triangles $\phi \in \pi_2(\bfx, \boldsymbol{\Theta}, \bfy)$ of Maslov index one preserves $\ngr$ if and only if the orientations on $T_\bfx\bbT_\a \oplus T_\bfx \bbT_\b$ and $T_\bfy\bbT_\a \oplus T_\bfy \bbT_\g$ constructed for \Cref{new_gr} either both agree or both disagree with the obvious product orientations. 
\label[lemma]{maslov} 
\end{lemma}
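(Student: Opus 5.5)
Both index formulas are mod-2 statements, so I would derive them from the standard fact that the Maslov index of a strip or polygon in a symplectic manifold with oriented Lagrangian boundary conditions agrees, mod 2, with a sum of local signs at the corners. For strips I would use the spectral-flow description of $\m(\phi)$. Trivialize $\phi^*T\,\text{Sym}^g(\S)$ over the strip $[0,1]\times\bbR$. The boundary conditions $T\bbT_\a$ and $T\bbT_\b$ then become paths of Lagrangian subspaces in $\bbC^g$. They are oriented because $\bbT_\a$ and $\bbT_\b$ carry product orientations, and these orientations vary continuously along the boundary.

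For an oriented Lagrangian path, the Maslov index mod 2 counts how often the orientation of $\Lambda(t)$ flips relative to the determinant-squared map $\Lambda \mapsto \det^2$. Equivalently, it is the parity of the number of half-turns in $\det$. Concretely, I would use the following elementary fact. For a pair of transverse oriented Lagrangians $(\Lambda_0,\Lambda_1)$ in $\bbC^g$, the sign of the orientation of $\Lambda_0\oplus\Lambda_1$ relative to the complex orientation is a homotopy invariant of transverse oriented pairs. The Maslov index of a path of such pairs, with endpoints transverse, changes parity exactly when this sign flips between the two ends. One proves this by reducing to $g=1$: one checks it for a rotation of one line through $\pi/2$, which changes $\m$ by one and flips the sign. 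The orientation-spectral-flow relation can then be quoted from the standard orientation theory, e.g.\ the Fukaya--Oh--Ohta--Ono discussion of $\bbZ/2$ gradings via orientations.

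Applying this fact to the strip $\phi$ gives $(-1)^{\m(\phi)}=\e(\bfx)\e(\bfy)$. Since $\ngr(\bfx)$ differs from $\e(\bfx)$ only by a sign that is fixed across all generators, namely the comparison between the constructed orientation of $T\bbT_\a\oplus T\bbT_\b$ and the product orientation, together with $(-1)^g$, any Maslov index zero strip forces $\ngr(\bfx)=\ngr(\bfy)$. This sign is indeed fixed, because the canonical isomorphism $H^1(U_\a)\oplus H^1(U_\b)\to T_\bfx\bbT_\a\oplus T_\bfx\bbT_\b$ sends the basis to positive tangent vectors, which is exactly the product orientation up to a universal permutation sign.

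For triangles, I would glue. Concatenate the triangle $\phi$ with boundary on $\bbT_\a,\bbT_\b,\bbT_\g$ and use the additivity of the Maslov index together with the same orientation count around the three corners. Equivalently, I would compute directly by viewing the boundary Lagrangian loop as passing through three corners. At each corner the oriented Lagrangian switches, and the parity of the index picks up the local sign at that corner. The index convention for triangles involves an extra $g/2$-type normalization that should be absorbed so that constant-like triangles at a point with all signs positive have the right parity. I would check that normalization on a model example in $\text{Sym}^1$, a small triangle on a genus-one diagram, which has $\m=0$ and all three local signs equal to $+1$, and then extend to general $g$ by the product structure. The assumption $\e(\boldsymbol{\Theta})=1$ then yields the stated criterion for the triangle map: $\ngr$ is preserved exactly when the two constructed orientations deviate from the product orientations in the same way.

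The main obstacle is the orientation-versus-Maslov-parity fact and its triangle normalization, in particular getting corner conventions and the $\text{Sym}^g$ versus $\S^{\times g}$ orientations consistent. I would settle this by explicit computation in $\bbC^g$ with $g=1$ and the product reduction.
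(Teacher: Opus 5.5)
Your strip argument is fine; it is the standard orientation--parity fact that the paper simply cites. The gap is in the triangle half, at the step where you check the normalization at $g=1$ and ``extend to general $g$ by the product structure''. The normalization does not survive that extension. Take $\bsg$ to be a small Hamiltonian translate of $\bsb$, $x_i\in\a_i\cap\b_i$, and $\phi\in\pi_2(\bfx,\boldsymbol{\Theta},\bfy)$ the nearest-point triangle, which is a product of $g$ small embedded triangles. Then $\m(\phi)=0$, because the index is additive under products and each factor is rigid. Each factor satisfies $\e(x_i)\e(\theta_i)\e(y_i)=+1$, which is exactly your $g=1$ check. But by \Cref{int-nums} each of $\e(\bfx)$, $\e(\boldsymbol{\Theta})$, $\e(\bfy)$ carries the reordering sign $(-1)^{g(g-1)/2}$, so
\[\e(\bfx)\e(\boldsymbol{\Theta})\e(\bfy)=(-1)^{\frac{3g(g-1)}{2}}=(-1)^{\frac{g(g-1)}{2}}.\]
This triple product does not depend on the chosen orderings and orientations, since each such change multiplies two of the three signs by the same $\pm1$. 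Hence for $g\equiv 2,3\pmod 4$ no choice with $\e(\boldsymbol{\Theta})=1$ makes the displayed triangle identity true. The per-corner sign cancels for the two corners of a strip, which is why your strip formula holds for every $g$, but it survives at the three corners of a triangle. What your argument actually proves is
\[(-1)^{\m(\phi)}=(-1)^{\frac{g(g-1)}{2}}\,\e(\bfx)\e(\boldsymbol{\Theta})\e(\bfy).\]
So you must either assume $(-1)^{g(g-1)/2}=1$, which is what the paper arranges by stabilizing in the proof of \Cref{main-theorem1}, or replace the hypothesis $\e(\boldsymbol{\Theta})=1$ by $(-1)^{g(g-1)/2}\e(\boldsymbol{\Theta})=1$. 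The latter holds, e.g., for a $\boldsymbol{\Theta}$ with identity permutation and all local signs positive. Relatedly, there is no ``$g/2$-type'' normalization of $\m$ to absorb: $\m$ is an honest index, additive under products, and the correction lives entirely in the orientation bookkeeping.

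Second, your claim that $\e(\boldsymbol{\Theta})=1$ ``yields the stated criterion'' depends on the parity of the triangles being counted. The triangle maps actually used (e.g.\ $\underline{f_1}^+$ in the proof of \Cref{main-theorem1}) count triangles with $\m(\phi)=0$. Then the (corrected) formula gives $\e(\bfx)=\e(\bfy)$. Since $\sign=\pm\e$ with a sign fixed for each diagram, and the factor $(-1)^g$ is common to both diagrams, $\ngr$ is preserved exactly when both constructed orientations agree, or both disagree, with the product orientations. If one literally counts index-one triangles, as the statement says, the formula gives $\e(\bfx)=-\e(\bfy)$ and the criterion is reversed. You should state the index-zero assumption explicitly rather than asserting the criterion.
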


\begin{proof}
    This follows from standard properties of the Maslov index in Lagrangian Floer homology; see Sections 1.3 and 2.1 of \cite{A1}.
\end{proof}

Passing to homology, we obtain an absolute $\bbZ/2$ grading on $\HF^\circ(\bbT_\a, \bbT_\b)$, and it remains to verify that this grading is independent of the choice of Heegaard diagram. 
For this, we need the following result (Lemma 2.8 in \cite{FJR1}) on the local intersection numbers of $\bbT_\a$ and $\bbT_\b$ equipped with the standard product orientations induced by orderings and orientations of the alpha and beta curves:

\begin{lemma}
    Let $(\S, \a_1, \ldots, \a_g, \b_1, \ldots, \b_g)$ be a Heegaard diagram with orderings and orientations of the alpha and beta curves. 
    Fix the corresponding product orientations on $\bbT_\a$ and $\bbT_\b$. 
    Let $\bfx = \{x_1, \ldots, x_g\} \in \bbT_\a \cap \bbT_\b$, and let $\s$ be the permutation such that $x_i \in \a_i \cap \b_{\s(i)}$. 
    Let also $\e(x_i)$ denote the local intersection number of $\a_i$ and $\b_{\s(i)}$ at $x_i$. 
    Then the sign of $\bfx$ as an intersection point between $\bbT_\a$ and $\bbT_\b$ is given by   
     \[\e(\bfx) = (-1)^{\frac{g(g-1)}{2}}\text{\emph{sgn}}(\s)\prod_{i = 1}^g\e(x_i).\]
     \label[lemma]{int-nums}
\end{lemma}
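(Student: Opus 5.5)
The plan is to compute the local intersection sign directly in the tangent space $T_\bfx \text{Sym}^g(\S) = \bigoplus_{i} T_{x_i}\S$. Each summand $T_{x_i}\S$ carries the complex orientation coming from $\S$, and since it is two-dimensional these blocks commute, so the orientation of the whole space is the direct sum of the block orientations in any order.

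First, write down the relevant oriented bases. The product orientation on $\bbT_\a$ is $T_{x_1}\a_1 \oplus \cdots \oplus T_{x_g}\a_g$, with positively oriented basis $(v_1, \ldots, v_g)$. Here $v_i \in T_{x_i}\a_i$ is a positive tangent vector, viewed inside the $i$-th block. The product orientation on $\bbT_\b$ is $T\b_1 \oplus \cdots \oplus T\b_g$. The point of $\b_j$ lying in $\bfx$ is $x_{\s^{-1}(j)}$, so the positively oriented basis is $(w'_1, \ldots, w'_g)$, where $w'_j$ is a positive tangent vector to $\b_j$ at $x_{\s^{-1}(j)}$. By definition, $\e(\bfx)$ is the sign comparing the basis $(v_1, \ldots, v_g, w'_1, \ldots, w'_g)$ with the orientation of $\text{Sym}^g(\S)$.

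Second, reorder this basis in two steps and track the signs.
\begin{itemize}
\item Reindex the beta vectors by $i$, so that the list becomes $(v_1, \ldots, v_g, w_1, \ldots, w_g)$ with $w_i = w'_{\s(i)} \in T_{x_i}\b_{\s(i)}$. This permutes $g$ vectors by $\s^{-1}$ and contributes $\text{sgn}(\s)$.
\item Shuffle the list into the interleaved order $(v_1, w_1, v_2, w_2, \ldots, v_g, w_g)$. Moving $w_1$ past $v_2, \ldots, v_g$, then $w_2$ past $v_3, \ldots, v_g$, and so on, uses $(g-1) + (g-2) + \cdots + 0 = g(g-1)/2$ transpositions. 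This contributes $(-1)^{g(g-1)/2}$.
\end{itemize}

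Finally, $(v_i, w_i)$ is a basis of $T_{x_i}\S$. By the definition of the local intersection number of $\a_i$ and $\b_{\s(i)}$ at $x_i$, it is positively oriented exactly when $\e(x_i) = 1$. Multiplying the signs from the $g$ blocks gives $\prod_i \e(x_i)$, and combining this with the two reordering signs yields
\[\e(\bfx) = (-1)^{\frac{g(g-1)}{2}}\text{sgn}(\s)\prod_{i=1}^g \e(x_i).\]

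The main obstacle is consistency of conventions. One must check that the orientation of $\text{Sym}^g(\S)$ near $\bfx$ really is the product of the complex orientations on the blocks, which holds because the local chart $\S^g \to \text{Sym}^g(\S)$ is a local diffeomorphism near points with distinct coordinates. One must also check that the intersection-number convention ($\bbT_\a$ first, then $\bbT_\b$) matches the one used for curves ($\a_i$ first, then $\b_{\s(i)}$). Since $\text{sgn}(\s^{-1}) = \text{sgn}(\s)$, the direction of the reindexing permutation does not matter.
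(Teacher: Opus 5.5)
Your argument is correct. You identify $T_\bfx\text{Sym}^g(\S)$ with $\bigoplus_i T_{x_i}\S$ through the local chart, reindex the $\b$-vectors (sign $\text{sgn}(\s)$), interleave them with the $\a$-vectors (sign $(-1)^{g(g-1)/2}$), and read off each $2\times 2$ block; this yields exactly the stated formula. The paper does not prove this lemma but imports it as Lemma 2.8 of \cite{FJR1}; your computation is the standard direct verification behind that citation, so it makes the statement self-contained rather than taking a different route.
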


With this, we can verify that $\ngr$ is a well-defined grading on $\HFR^\circ(Y)$.

\begin{proposition}
    Let $Y$ be a closed oriented 3-manifold, and let $(\S, \bsa, \bsb, z)$ be a pointed admissible Heegaard diagram for $Y$. 
    The absolute $\bbZ/2$ grading on $\HF^\circ(\bbT_\a, \bbT_\b)$ defined above is independent of the choice of Heegaard diagram. 
    \label[proposition]{gr-inv}
\end{proposition}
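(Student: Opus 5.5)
Any two pointed admissible Heegaard diagrams for $Y$ are related by isotopies, handleslides and stabilizations/destabilizations, all avoiding the basepoint (Reidemeister--Singer, as used in \cite{OS1}). The plan is to show that each of the induced isomorphisms on $\HF^\circ$ preserves $\ngr$. Since $\ngr(\bfx)$ is built from $\sign(\bfx)$, we first compare $\sign(\bfx)$ with the product-orientation sign $\e(\bfx)$. Fix orderings and orientations of the curves. With the product orientations on $\bbT_\a$ and $\bbT_\b$, the orientation of $T_\bfx\bbT_\a\oplus T_\bfx\bbT_\b$ from \Cref{new_gr} is obtained from the product orientation by a global sign $c(\bsa,\bsb)\in\{\pm1\}$. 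This sign is determined by comparing the orientation of $H^1(U_\a)\oplus H^1(U_\b)$ from the Mayer--Vietoris sequences with the basis $\langle\wta_1^*,\ldots,\wta_g^*,\wtb_1^*,\ldots,\wtb_g^*\rangle$. Crucially, $c$ does not depend on $\bfx$. Hence $(-1)^{\ngr(\bfx)} = (-1)^g c(\bsa,\bsb)\,\e(\bfx)$, and $\ngr$ differs from $\e$ by a diagram-dependent constant.

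\textbf{Isotopies.} For isotopies realized by Hamiltonian isotopies of $\bbT_\b$ (or $\bbT_\a$), the continuation map counts index-zero strips. By \Cref{maslov} it preserves $\e$. The cohomological data $H^1(U_\a)$, $H^1(U_\b)$ and the map $j_\a^*-j_\b^*$ are unchanged under isotopy, so $c$ is unchanged and $\ngr$ is preserved.

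\textbf{Handleslides.} Say $\bsb$ is replaced by $\bsg$. The isomorphism is the triangle map using the top generator $\boldsymbol{\Theta}\in\bbT_\b\cap\bbT_\g$, and we orient the $\g$ curves so that $\e(\boldsymbol{\Theta})=1$. By \Cref{maslov}, the triangle map preserves $\ngr$ exactly when $c(\bsa,\bsb)=c(\bsa,\bsg)$. To verify this, note that $U_\b$ and $U_\g$ are the same handlebody: the attaching disks of the $\g$ curves are obtained from those of $\bsb$ by band sums, so $H^1(U_\b)=H^1(U_\g)$ canonically, and the Mayer--Vietoris sequences coincide. The remaining task is to compare the bases $\{\wtb_i^*\}$ and $\{\wtg_i^*\}$. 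The dual curves change by an elementary unimodular matrix, of determinant $1$ once the orientation of $\g_1=\b_1\#\b_2$ is fixed compatibly. We also need $\e(\boldsymbol{\Theta})=1$ to be consistent with these orientations. This can be computed using \Cref{int-nums} on the standard model: $\b_i$ and $\g_i$ are small pushoffs of one another meeting in two points, and $\boldsymbol{\Theta}$ consists of the top points. That computation gives $\e(\boldsymbol{\Theta})=\pm1$ with a sign depending only on $g$. The factor $(-1)^{g(g-1)/2}$ in \Cref{int-nums} must be tracked carefully so that the conventions match.

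\textbf{Stabilization.} We add a torus with new curves $\a_{g+1}$, $\b_{g+1}$ meeting in a single point $x_{g+1}$, and send $\bfx\mapsto\bfx\cup\{x_{g+1}\}$. Then $H^1(Y)$ and $H^2(Y)$ are unchanged, and $H^1(\S)$ gains the symplectic pair $\a_{g+1}^*,\wta_{g+1}^*$ with $\wta_{g+1}=\b_{g+1}$. Under $j_\a^*-j_\b^*$, the new generators $\wta_{g+1}^*$ and $\wtb_{g+1}^*$ map to $\pm$ the new pair, one of them with the minus sign from $-j_\b^*$. This changes $\sign$ by exactly $-1$ relative to the orientation of $T\,\mathrm{Sym}^{g+1}$, and the factor $(-1)^{g+1}$ versus $(-1)^g$ compensates, as in the remark after \Cref{new_gr}.

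I expect the handleslide step to be the main obstacle. It is there that one must check that the Mayer--Vietoris orientation convention, the choice $\e(\boldsymbol{\Theta})=1$, and the sign formula of \Cref{int-nums} are mutually consistent.
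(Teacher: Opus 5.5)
Your outline follows the paper's proof step for step:
\begin{itemize}
\item reduce $\sign$ to the product sign $\e$ up to one diagram-dependent constant;
\item handle isotopies with index-zero continuation maps and \Cref{maslov};
\item handle handleslides with triangle maps, after checking that the canonical orientation is unchanged (same handlebody, determinant-one change of dual basis);
\item handle stabilization by showing that $\sign$ flips while $(-1)^g$ also flips.
\end{itemize}
Your stabilization heuristic gives the right answer. The paper makes it precise: the new pair $\a_{g+1}^*,\wta_{g+1}^*$ lifts to $\wtb_{g+1}^*,\wta_{g+1}^*$, so the change-of-basis determinant picks up $(-1)^{g+1}$, and \Cref{int-nums} contributes $\e(\bfx^+)=(-1)^g\e(\bfx)$.

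The handleslide step, however, is not finished, and the point you flagged is a real issue rather than bookkeeping.
\begin{itemize}
\item Orient the $\g$ curves compatibly with the $\b$ curves, which is what gives $c(\bsa,\bsb)=c(\bsa,\bsg)$. Then every top point $\theta_i$ has local sign $+1$, so \Cref{int-nums} gives $\e(\boldsymbol{\Theta})=(-1)^{g(g-1)/2}$. This is $-1$ when $g\equiv 2,3 \pmod 4$.
\item In those genera you cannot have both $\e(\boldsymbol{\Theta})=1$ and $c(\bsa,\bsb)=c(\bsa,\bsg)$, because reorienting a $\g$ curve flips both. Applied literally, your argument would then say that the triangle map reverses $\ngr$.
\end{itemize}
The missing idea is that the triangle identity in \Cref{maslov} holds only up to a constant depending on $g$; for strips this constant cancels. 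So you should calibrate it on an actual triangle rather than force $\e(\boldsymbol{\Theta})=1$ by reorienting a curve.
\begin{itemize}
\item The small triangles from $\bfx$ to its nearest point $\bfx'$ have $\m=0$.
\item With compatible orientations, $\e(\bfx)=\e(\bfx')$ by \Cref{int-nums}, since the permutation and the local signs are the same.
\item The quantity $(-1)^{\m(\phi)}\e(\bfx)\e(\bfy)$ depends only on $\boldsymbol{\Theta}$, the orientations and $g$, not on the triangle $\phi$.
\end{itemize}
Hence every index-zero triangle through $\boldsymbol{\Theta}$ preserves $\e$, and therefore preserves $\ngr$, whatever $\e(\boldsymbol{\Theta})$ is.

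Alternatively, stabilize first so that $(-1)^{g(g-1)/2}=1$. This is legitimate because your stabilization step does not use handleslides. It is exactly how the paper arranges $\e(\boldsymbol{\Theta}_{\b,\g})=1$ in the proof of \Cref{main-theorem1}. For what it is worth, the paper's proof of this proposition also applies \Cref{maslov} at this point without checking $\e(\boldsymbol{\Theta})$.
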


\begin{proof}
    Fix orderings and orientations of the alpha and beta curves.
    Without loss of generality, assume $\langle \wta_1^* \ldots, \wta_g^*, \wtb_1^*, \ldots, \wtb_g^* \rangle$ is a positively oriented basis of $H^1(U_\a ) \oplus H^1(U_\b )$ according to the construction of \Cref{new_gr}. 
    Then for each $\mathbf{x} \in \bbT_\a \cap \bbT_\b$, the orientation on $T_\mathbf{x} \bbT_\a \oplus T_\mathbf{x} \bbT_\b$ agrees with the product orientation induced by the orderings and orientations of the alpha and beta curves. In other words, $\sign(\bfx) = \e(\bfx)$. 

    We first verify invariance under isotopies. 
    Note that isotopies of the attaching curves don't change the corresponding classes in homology. 
    The isomorphism between the corresponding homology groups constructed in the proof of Theorem 7.3 in \cite{OS1} is obtained from a chain map counting disks with Maslov index zero. 
    By \Cref{maslov}, this must preserve $\ngr$. 

    Next, we verify invariance under stabilizations. 
    Let $(\S^+, \bsa^+, \bsb^+, z)$ denote the stabilization of the original Heegaard diagram. 
    Label the new alpha and beta curves $\a_{g+1}$ and $\b_{g+1}$. 
    Orient $\a_{g+1}$ and $\b_{g+1}$ so that they intersect positively at a single point $x_{g+1}$. 
    There is a clear correspondence between the sets of intersection points: a point $\bfx \in \bbT_\a \cap \bbT_\b$ corresponds to the point $\bfx^+ = \bfx \times \{x_{g+1}\} \in \bbT_\a^+ \cap \bbT_\b^+$. 
    We claim that for each $\bfx \in \bbT_\a \cap \bbT_\b$, 
    \[\ngr(\bfx) = \ngr(\bfx^+).\]
    We first compute the orientation on $H^1(U_\a^+ ) \oplus H^1(U_\b^+)$. 
    Fix an orientation of $H^1(Y)$. 
    For brevity, let $\psi = j_\a^* - j_\b^*$ and $\psi^+ = j_{\a^+}^* - j_{\b^+}^*$. 
    Let $B$ be a positively oriented basis of $\text{Im}(\psi)$. 
    Then there is an orientation-preserving isomorphism 
    \[f : \text{Im}(\psi) \oplus H^2(Y ) \longrightarrow H^1(\S ).\]
    The stabilized diagram $(\S^+, \bsa^+, \bsb^+, z)$ is obtained by taking the connect sum with a standard genus one Heegaard diagram for $S^3$, so that $\psi^+(\wtb_{g+1}^*) = \a_{g+1}^*$, $\psi^+(\wta_{g+1}^*) = \wta_{g+1}^*$, and 
    \[\text{Im}(\psi^+) = \text{Im}(\psi) \oplus \langle \a_{g+1}^*, \wta_{g+1}^*\rangle.\]
    Define an isomorphism $f^+ : \text{Im}(\psi^+) \oplus H^2(Y ) \to H^1(\S^+ )$ by letting $f^+$ be the inclusion on $\text{Im}(\psi^+)$ and letting $f^+$ be equal to $f$ on $H^2(Y )$. 
    Since $f$ is orientation-preserving, $B \oplus \langle \a_{g+1}^*, \wta_{g+1}^*\rangle$ is a positively oriented basis of $\text{Im}(\psi^+)$. 

    Similarly, there is an isomorphism $h : H^1(Y ) \oplus \text{Im}(\psi)  \to H^1(U_\a ) \oplus H^1(U_\b )$. 
    Let $M$ denote the change of basis matrix between the image of this isomorphism and the basis $\langle \wta_1^*, \ldots, \wta_g^*, \wtb_1^*, \ldots, \wtb_g^*\rangle$, so that $\det(M) > 0$.  
    Again, use $h$ to construct an isomorphism $h^+ : H^1(Y ) \oplus \text{Im}(\psi)  \oplus\langle\a_{g+1}^*, \wta^*_{g+1}\rangle \to H^1(U_\a^+ ) \oplus H^1(U_\b^+ )$. 
    Then the change of basis matrix $M^+$ between the image of $h^+$ and $\langle \wta_1^*, \ldots, \wta_{g+1}^*, \wtb_1^*, \ldots, \wtb_{g+1}^*\rangle$ has determinant $(-1)^{g+1}\cdot \det(M)$. 
    Therefore, $\sign(\bfx^+) = (-1)^{g+1}\e(\bfx^+)$.  

    By \Cref{int-nums}, $\e(\bfx^+) = (-1)^g\e(\bfx)$. Combining this with the computations above, we see 
    \begin{align*}
        (-1)^{g+1}\sign(\bfx^+) &= (-1)^{g+1}(-1)^{g+1}\e(\bfx^+) \\
        &= \e(\bfx^+)\\
        &= (-1)^g\e(\bfx) \\
        &= (-1)^g\sign(\bfx).
    \end{align*}
    Therefore, $\ngr(\bfx^+) = \ngr(\bfx)$.

    Finally, we check invariance under handleslides. 
    Without loss of generality, assume we handleslide $\a_1$ over $\a_2$ to obtain a new curve $\g_1$. 
    Orient this new curve so that $\g_1 = \a_1 - \a_2$. 
    Note that $\g_1$ intersects $\wta_1$ positively, since $\g_1 \cdot \wta_1 = (\a_1 - \a_2) \cdot \wta_1 = 1$, so that we may let $\wtg_1 = \wta_1$.
    For $1 < i \leq g$, let $\g_i$ denote a curve isotopic to $\a_i$, intersecting $\a_i$ transversely in two points. 
    It is easy to see that the bases $\langle \a_1^*, \wta_1^*, \ldots, \a_g^*, \wta^*_g\rangle$ and $\langle \g_1^*, \wtg_1^*, \ldots, \g_g^*, \wtg_g^*\rangle$ determine the same orientation on $H^1(\S)$. 
    It follows that $\langle \wtg_1^*, \ldots, \wtg_g^*, \wtb_1^*, \ldots, \wtb_g^*\rangle$ is a positively oriented basis for $H^1(U_\g ) \oplus H^1(U_\b )$, so that for any $\mathbf{y} \in \bbT_\g \cap \bbT_\b$, the orientation of $T_\bfy\bbT_\g \oplus T_\bfy\bbT_\b$ used in \Cref{new_gr} agrees with the one induced by the orderings and orientations of the gamma and beta curves. 
    Invariance now follows from the construction of the maps proving invariance of Heegaard Floer homology under handleslides, which are induced by certain triangle maps (Theorem 9.5 in \cite{OS1}). 
    In particular, by \Cref{maslov}, we have an isomorphism between $\HF^\circ(\bbT_\g, \bbT_\b)$ and $\HF^\circ(\bbT_\a, \bbT_\b)$ which preserves $\ngr$. 
    The case of sliding one of the beta curves over another is similar. 
\end{proof}

\subsection{Relation to gr}
Having shown that the grading $\ngr$ constructed in the previous section is a well-defined grading on the Heegaard Floer homology groups, we turn to showing the second part of \Cref{main-theorem1}. 
Namely, we show that this grading agrees with the absolute $\bbZ/2$ grading $\gr$ defined by Ozsv\'ath and Szab\'o in \cite{OS2}.

\begin{proof}[Proof of \Cref{main-theorem1}]
We induct on $b_1(Y)$. The case where $Y$ is a closed oriented three-manifold with $b_1(Y) = 0$ is Corollary 3.4 in \cite{P1}. 

When $b_1(Y) = n > 0$, let $K \subset Y$ be a knot representing a primitive element of $H_1(Y  ; \bbZ) /\text{Tors}$. 
Remove a tubular neighborhood $N(K)$ of $K$ and attach a solid torus by identifying a meridian to a longitude on $\partial(Y - N(K))$. 
This results in another closed, oriented three-manifold $X$ such that $b_1(X) = b_1(Y) - 1$ and the knot $K$ is nullhomologous in $X$. 
Let $X_p$ denote the three-manifold obtained by integral surgery on $K \subset X$ with framing $p$. 
Then $Y = X_0$ and $X_1$ is a three-manifold with $b_1(X_1) = b_1(X)$. 
There is a long exact sequence (see Theorem 9.1 in \cite{JM1}) relating the fully twisted Heegaard Floer homologies of $X, X_0$, and $X_1$: 
\[\dots \longrightarrow \underline{\HF}^+(X)[T, T^{-1}] \xrightarrow[]{\underline{F_1^+}}\underline{\HF}^+(X_0) \xrightarrow[]{\underline{F_2^+}} \underline{\HF}^+(X_1)[T, T^{-1}] \xrightarrow[]{\underline{F_3^+}} \dots\]

Both $\underline{F_1^+}$ and $\underline{F_3^+}$ preserve $\gr$ while $\underline{F_2^+}$ shifts it. 
These three maps are obtained from triangle maps associated to a suitably admissible pointed Heegaard multi-diagram $(\S, \bsa, \bsb, \bsg, \bsd, z)$ for the triple $(X, X_0, X_1)$ satisfying the following properties:
\begin{itemize}
    \item The Heegaard diagrams $(\S, \boldsymbol{\a},  \boldsymbol{\b})$, $(\S, \boldsymbol{\a},  \boldsymbol{\g})$, and $(\S, \boldsymbol{\a}, \boldsymbol{\d})$ describe $X, X_0$, and $X_1$ respectively. 
    \item For $i = 1, \ldots, g-1$, the curves $\g_i$ and $\delta_i$ are small isotopic translates of $\b_i$, each pairwise intersecting in a pair of cancelling intersection points. 
    \item The curve $\d_g$ is isotopic to the juxtaposition of $\b_g$ and $\g_g$ with appropriate signs.
\end{itemize}
By stabilizing if necessary, assume that $(-1)^g = (-1)^{g(g-1)/2} = (-1)^{g(g+1)/2} = 1$. 
Following the proof of Theorem 9.1 in \cite{OS2}, orient $\a_1, \ldots, \a_g$ and $\b_1, \ldots, \b_{g-1}$ arbitrarily. 
This induces orientations on $\g_1, \ldots, \g_{g-1}$ and $\d_1, \ldots, \d_{g-1}$, following the isotopies of the beta curves. 
The orientations on $\b_g$ and $\d_g$ are determined by requiring that the absolute $\bbZ/2$ gradings on $\CF^\infty(\bbT_\a, \bbT_\b)$ and $\CF^\infty(\bbT_\a, \bbT_\d)$ induced by the orientations of the alpha, beta, and delta curves agree with Ozsv\'ath and Szab\'o's grading, which in turn agrees with the new grading by the inductive hypothesis. 
Orient $\g_g$ so that $\d_g = \b_g - \g_g$.

For $i < g$, label the intersection points $y_i^\pm = \b_i \cap \g_i, v_i^\pm = \g_i \cap \d_i,$ and $w_i^\pm = \b_i \cap \d_i$, where the sign indicates the sign of the intersection at that point. 
Also, let $y_g = \b_g \cap \g_g, v_g = \g_g \cap \d_g,$ and $w_g = \b_g \cap \d_g$. 
Then let 
\[\boldsymbol{\Theta}_{\b, \g} = \{y_1^+, \ldots, y_{g-1}^+, y_g\}, \hspace{.2cm} \boldsymbol{\Theta}_{\g, \d} = \{v_1^+, \ldots, v_{g-1}^+, v_g\}, \hspace{.2cm} \boldsymbol{\Theta}_{\b, \d} = \{w_1^+, \ldots, w_{g-1}^+, w_g\}\]
be the corresponding intersection points in $\bbT_\b \cap \bbT_g, \bbT_\g \cap \bbT_\d, $and $\bbT_\b \cap \bbT_\d$.
Note that $\b_g \cdot \g_g = 1$ and $\g_g \cdot \d_g = -1$, so that by \Cref{int-nums}$, \e(\boldsymbol{\Theta}_{\b, \g}) = 1$ and $\e(\boldsymbol{\Theta}_{\g, \d}) = -1$.
On the chain level, $\underline{F_1^+}$ and $\underline{F_2^+}$ are induced by the maps 
\[\underline{f_1}^+([\mathbf{x}, i]) = \sum_{\mathbf{y} \in \mathbb{T}_\a \cap \mathbb{T}_\g} \sum_{\{\phi \in \pi_2(\mathbf{x}, \boldsymbol{\Theta}_{\b, \g}, \mathbf{y}) \hspace{.03cm} \vert \hspace{.03cm}\mu(\phi) = 0\}} c(\mathbf{x}, \mathbf{y}, \phi) \cdot [\mathbf{y}, i - n_z(\phi)]\]
and 
\[\underline{f_2}^+([\mathbf{x}, i]) = \sum_{\mathbf{y} \in \mathbb{T}_\a \cap \mathbb{T}_\d} \sum_{\{\phi \in \pi_2(\mathbf{x}, \boldsymbol{\Theta}_{\g, \d}, \mathbf{y}) \hspace{.03cm} \vert \hspace{.03cm}\mu(\phi) = 0\}} c(\mathbf{x}, \mathbf{y}, \phi) \cdot [\mathbf{y}, i - n_z(\phi)],\]
where $c(\mathbf{x}, \mathbf{y}, \phi) \in \bbZ[T, T^{-1}]$ depends on $\mathbf{x}, \mathbf{y}$, and $\phi$.
By \Cref{maslov}, both $\underline{F_1^+}$ and $\underline{F_3^+}$ will preserve $\ngr$ while $\underline{F_2^+}$ will shift it if and only if $\sign(\bfx) = \e(\bfx)$ for every $\bfx \in \bbT_\a \cap \bbT_\g$. 
Since $\g_g \cdot \b_g = -1$ and $\b_i \simeq \g_i$ for $i < g$, it suffices to show that $\langle \wta_1^*, \ldots, \wta_g^*, \wtb_1^*, \ldots, \wtb_{g-1}^*, -\b_g^*\rangle$ is a positively oriented basis of $H^1(U_\a ) \oplus H^1(U_\g )$ according to the construction of Section 2.1. 

Fix an orientation on $H^1(X )$, inducing an orientation on $H^2(X )$. 
Orient $H^1(X_0 )$ as $H^1(X_0 ) = \langle \b_g^* \rangle \oplus H^1(X )$, so that $H^2(X_0 )$ is also oriented as $H^2(X_0 ) = \langle PD^{-1}(\b_g)\rangle \oplus H^2(X )$. 
Again, write $\psi = j_\a^* - j_\b^*$ and $\psi_0 =j_\a^* - j_\g^*$. 
We have that $\text{Im}(\psi) = \text{Im}(\psi_0) \oplus \langle \psi(\wtb_g^*)\rangle$, so choose a basis 
$B$ of $\text{Im}(\psi_0)$ such that $B \oplus \langle \psi(\wtb_g^*)\rangle$ is a positively oriented basis of $\text{Im}(\psi).$
We claim that the basis $B$ is a positively oriented basis of $\text{Im}(\psi_0)$. 
For, we have an orientation-preserving isomorphism $f : \text{Im}(\psi) \oplus H^2(Y ) \to H^1(\S )$. 
Define an isomorphism $f_0 : \text{Im}(\psi_0) \oplus H^2(X_0 ) \to H^1(\S )$ by 
\[\restr{f_0}{\text{Im}(\psi_0)} = \restr{f}{\text{Im}(\psi_0)}, \hspace{.25cm} \restr{f_0}{H^2(X )} = \restr{f}{H^2(X )}, \hspace{.25cm} f_0(PD^{-1}(\b_g)) = f(\psi(\wtb_g^*)).\]

For this to satisfy the appropriate commutativity conditions, we need that $\delta(\psi(\wtb_g^*)) = PD^{-1}(\b_g)$, or equivalently that $PD(\delta(\psi(\wtb_g^*))) = \b_g$. 
Since the image of $\psi(\wtb_g^*)$ under the boundary map in the Mayer-Vietoris sequence for $(X, U_\a, U_\b, \S)$ is trivial, we need only check that $\b_g^*(PD(\delta(\psi(\wtb_g^*)))) = \b_g^*(\b_g) = 1$.
Computing this, 
\begin{align*}
    \b_g^*(PD(\d(\psi(\wtb_g^*)))) &= \b_g^*([X_0] \frown \delta(\psi(\wtb_g^*))) \\
    &= (\d(\psi(\wtb_g^*)) \smile\b_g^*)[X_0] \\
    &= PD(\d(\ps(\wtb_g^*))) \cdot PD(\b_g^*) \\
    &= (-\b_g)\cdot(S \text{ s.t. }\partial S = -\g_g) = 1.
\end{align*}

Similarly, we orient $H^1(U_\a ) \oplus H^1(U_\g )$. 
Let $C$ denote a positively oriented basis of $H^1(X)$. 
By induction, there is an isomorphism $h : H^1(X ) \oplus \text{Im}(\psi) \to H^1(U_\a ) \oplus H^1(U_\b )$ such that the change of basis matrix $M$ between the image of the basis $C\oplus B \oplus \langle \psi(\wtb_g^*)\rangle$ and $\langle \wta_1^*, \ldots, \wta_g^*, \wtb_1^*, \ldots, \wtb_g^*\rangle$ has positive determinant. 
Recalling that $H^1(X_0 ) = \langle \b_g^*\rangle \oplus H^1(X )$, define an isomorphism $h_0 : H^1(X_0 ) \oplus \text{Im}(\psi_0) \to H^1(U_\a ) \oplus H^1(U_\g )$ by 
\[\restr{h_0}{\text{Im}(\psi_0)} = \restr{h}{\text{Im}(\psi_0)}, \hspace{.25cm} \restr{h_0}{H^1(X )} = \restr{h}{H^1(X )},\hspace{.25cm} h_0(\b_g^*) = i_\a^*(\b_g^*) + i_\b^*(\b_g^*) = \sum_{i = 1}^g(\a_i \cdot \b_g)\wta_i^* + \b_g^*.\]
Then the change of basis matrix between the image of the basis $\langle \b_g^*\rangle \oplus  C\oplus B$ under $h_0$ and the basis $\langle \wta_1^*, \ldots, \wta_g^*, \wtb_1^*, \ldots, \wtb_{g-1}^*, -\b_g^*\rangle$ has determinant 
\[ \det
\begin{bNiceArray}{c|ccc}[margin, columns-width=auto]
\a_1 \cdot \b_g & \Block[B]{3-3}{\text{Im}\left(\restr{h}{C \oplus B}\right)} & & \\
\Vdots & & &\\
\a_g \cdot \b_g & & &\\
0 & & &\\
\Vdots & & &\\
0 & & & \\
\hline
-1 & 0 & \Cdots &0
\end{bNiceArray}
= (-1)^{2g-1}(-1) \cdot\det(M) > 0.
\]

Therefore, for any $\bfx \in \bbT_\a \cap \bbT_\g$, $\sign(\bfx) = \e(\bfx)$. 
It follows that $\underline{F_1}^+$ and $\underline{F_3}^+$  preserve $\ngr$ and $\underline{F_2}^+$ reverses it. 
By induction, $\ngr$ must agree with Ozsv\'ath and Szab\'o's absolute $\bbZ/2$ grading for $X_0 = Y$ as well. 
\end{proof}

\section{The absolute \texorpdfstring{$\bbZ/2$}{Z/2} grading for real Heegaard Floer homology}
\subsection{Background on real Heegaard Floer homology} 
We first recall some basic facts about real Heegaard Floer homology. 
See \cite{GM1} for full details. 

A \emph{pointed real three-manifold} $(Y, \t, \mathbf{w})$ is a closed oriented three-manifold $Y$, an orientation-preserving involution $\t$ on $Y$ whose fixed point set $C$ is nonempty and has codimension two, and a set of basepoints $\mathbf{w} \in Y$. 
A \emph{real Heegaard splitting} for $(Y, \t)$ is a decomposition $Y = U \cup \t(U)$ into two handlebodies, where one handlebody is the image under $\t$ of the other. 
A \emph{multi-pointed real Heegaard diagram} for $(Y, \t, \mathbf{w})$ is the data $(\S, \a_1, \ldots, \a_m, \b_1, \ldots, \b_m, \mathbf{w}, \t)$.
Here, $m = g(\S) + |\mathbf{w}| -1$ and we require that $\mathbf{w}$ is a collection of basepoints contained in the fixed set $C$. 
We will only consider the case where $\bfw$ has exactly one basepoint on each component of $C$.
Both $\bsa = \{\a_i\}$ and $\bsb = \{\b_i\}$ are collections of disjoint, simple closed curves in $\S$ which bound compressing disks in $U$ and $\t(U)$ respectively, and such that each component of $\S \setminus \bsa$ and $\S \setminus \bsb$ contains exactly one basepoint. 
Finally, we require that $\t : \S \to \S$ exchanges $\bsa$ and $\bsb$. 
As in the regular case, any two real Heegaard diagrams representing a given real three-manifold are related by a sequence of real Heegaard moves. 

In this case, the real Heegaard Floer homology groups are defined by first fixing a real Heegaard diagram $(\S, \bsa, \bsb, \mathbf{w}, \t)$ realizing $(Y, \t, \mathbf{w})$ and considering the symmetric product $M = \text{Sym}^m(\S)$, which inherits an involution $R$ from $\t$. 
This admits a symplectic form with respect to which the torus $\bbT_\a = \a_1 \times \ldots \times \a_m$ and the fixed point set $M^R$ are Lagrangian. 
In its simplest form, the real Heegaard Floer homology of $(Y, \t, \bfw)$ is the Lagrangian Floer homology of $\bbT_\a$ and $M^R$ in $\text{Sym}^m(\S)$.

\subsection{The definition of the grading}
Let $L \subset X$ be an oriented nullhomologous $l$-component link in a closed orientable three-manifold $X$. 
Recall that double branched covers of $X$ along $L$ are classified by classes $S \in H_2(X, L ; \bbZ/2)$ satisfying $\partial S = [L]$.
Fix a primitive homology class $S \in H_2(X, L ; \bbZ)$ satisfying $\partial S = [L]$. 
Let $(Y, \t)$ be the double branched cover of $X$ along $L$ determined by the image of $S$ under the map $H_2(X, L ; \bbZ) \to H_2(X, L ; \bbZ/2)$, where $\t$ is the branching involution. 
Let $C$ denote the fixed point set of $\t$ in $Y$. 
Fix an ordering of the link components, and let $\mathbf{w} = \{w_1, \ldots, w_l\}$ be a set of basepoints such that $w_i$ lies on component $C_i$. 
To construct the grading, we use a certain real Heegaard diagram for $(Y, \t, \bfw)$. 
Because $L$ is nullhomologous in $X$ and $S$ is a primitive element of $H_2(X, L ; \bbZ)$, the real three-manifold $(Y, \t)$ admits a real Heegaard splitting $Y = U \cup_{\S}\t(U)$ such that the quotient $\S' = \S/\t$ is connected and orientable. 

We say that a Seifert surface for a link is \emph{free} if the complement of a neighborhood of the surface is a handlebody. 
As in Proposition 3.6 in \cite{GM1}, we work backwards using a free Seifert surface $\S' \subset X$ satisfying $[\S'] = S$ to construct a real Heegaard diagram for $(Y, \t, \bfw)$. 
Take a neighborhood $\n(\S') \cong \S' \times [0, 1] \subset X$ of $\S'$ whose complement is a handlebody. 
There is a natural involution $\t$ on the boundary $\partial \n(\S') \cong (\S' \times \partial I) \cup (L \times I)$ which swaps the two components of $\S' \times \partial I$ and reflects $L \times I$ through $L \times \{1/2\}$. 
Take representatives for a basis of homology for $H_1(\S'; \bbZ)$ which bound compressing disks in $X \setminus \n(\S')$; these determine alpha curves $\a_1, \ldots, \a_g$ on $\partial \nu( \S') =\S$. 
Choose $l-1$ more alpha curves on $\S$ as follows: around each of the basepoints $w_2, \ldots, w_l$, take a small simple closed curve encircling that basepoint, disjoint from all the other alpha curves. 
The involution determines the beta curves, and we get a Heegaard diagram $(\S, \bsa, \bsb, \bfw, \t)$ for $(Y, \t, \mathbf{w})$. 
\begin{figure}[htbp]
    \centering
    \def\svgwidth{.4\textwidth}
\begingroup%
  \makeatletter%
  \providecommand\color[2][]{%
    \errmessage{(Inkscape) Color is used for the text in Inkscape, but the package 'color.sty' is not loaded}%
    \renewcommand\color[2][]{}%
  }%
  \providecommand\transparent[1]{%
    \errmessage{(Inkscape) Transparency is used (non-zero) for the text in Inkscape, but the package 'transparent.sty' is not loaded}%
    \renewcommand\transparent[1]{}%
  }%
  \providecommand\rotatebox[2]{#2}%
  \newcommand*\fsize{\dimexpr\f@size pt\relax}%
  \newcommand*\lineheight[1]{\fontsize{\fsize}{#1\fsize}\selectfont}%
  \ifx\svgwidth\undefined%
    \setlength{\unitlength}{810bp}%
    \ifx\svgscale\undefined%
      \relax%
    \else%
      \setlength{\unitlength}{\unitlength * \real{\svgscale}}%
    \fi%
  \else%
    \setlength{\unitlength}{\svgwidth}%
  \fi%
  \global\let\svgwidth\undefined%
  \global\let\svgscale\undefined%
  \makeatother%
  \begin{picture}(1,0.69444444)%
    \lineheight{1}%
    \setlength\tabcolsep{0pt}%
    \put(0,0){\includegraphics[width=\unitlength,page=1]{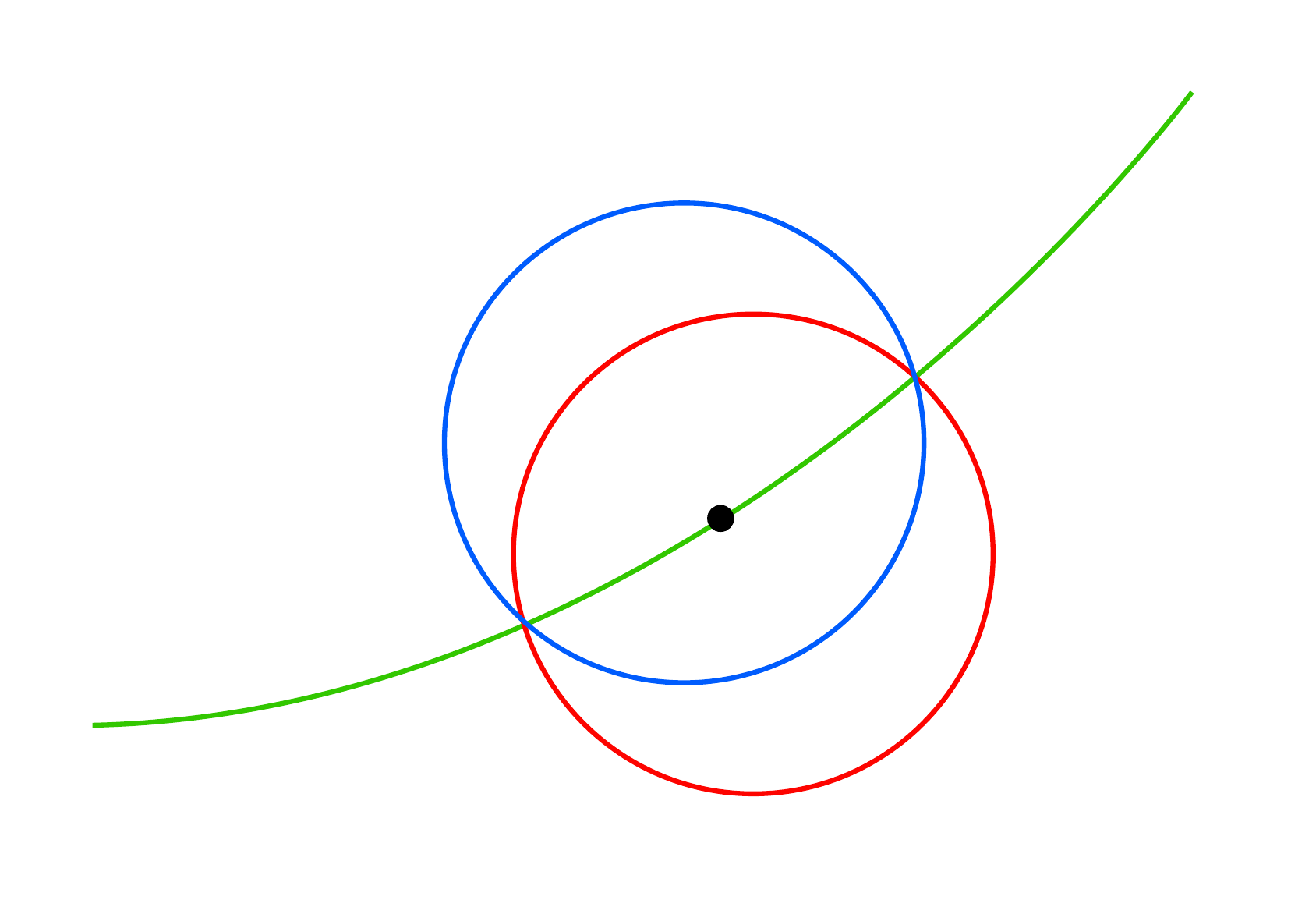}}%
    \put(0.57000162,0.2664284){\color[rgb]{0,0,0}\transparent{0.99536997}\makebox(0,0)[lt]{\smash{\begin{tabular}[t]{l}$w_i$\end{tabular}}}}%
  \end{picture}%
\endgroup%

    \caption{An example of a pair of additional alpha and beta curves.}
    \label{fig:special-alpha}
\end{figure}

Let $m = g + l-1$ denote the number of alpha curves, and let $M = \text{Sym}^m(\S)$ denote the $m$-fold symmetric product. Let $\widehat{M} = M \setminus (\mathbf{w} \times \text{Sym}^{m-1}(\S))$. 
Since there is a basepoint on each component of $C$, the resulting Lagrangian $\widehat{M^R} = M^R \cap \widehat{M}$ becomes orientable.
Fixing an absolute $\bbZ/2$ grading can then be accomplished by fixing orientations of the Lagrangians $\bbT_\a$ and $\widehat{M^R}$ in $M$.
Following Section 7 of \cite{GM1}, $\widehat{M^R}$ breaks up into strata of the form 
\[\text{Sym}^k(\S') \times \text{Sym}^{m - 2k}(\widehat{C})\]
where $\widehat{C} = C \setminus \mathbf{w}$. Orient $\S'$ so that the orientation induced on its boundary agrees with the given one on $L$. 
The orientations and ordering of the link components also determine orientations of the symmetric products $\text{Sym}^{m - 2k}(\widehat{C})$. 
We refer to Section 7.1 of \cite{GM1} for more details.

To orient the other Lagrangian, observe that choosing an orientation of $\bbT_\a$ is equivalent to choosing an orientation of $H_1(U_\a, \mathbf{w} ; \bbR)$. 
Consider the long exact sequence in homology of the triple $(X, X - \S', \mathbf{w})$:
\[0 \longrightarrow H_2(X,\mathbf{w}) \longrightarrow H_2(X, X- \S') \longrightarrow H_1(X - \S', \mathbf{w}) \longrightarrow H_1(X, \mathbf{w}) \longrightarrow 0.\]
Here, we are taking homology with coefficients in $\bbR$. 
The long exact sequence of the pair $(X, \mathbf{w})$ yields isomorphisms $H_2(X, \mathbf{w}) \cong H_2(X)$ and $H_1(X, \mathbf{w}) \cong H_1(X) \oplus \ker(H_0(\mathbf{w} ) \to H_0(X))$. 
So, the exact sequence above can be rewritten as 
\begin{equation}
0 \longrightarrow H_2(X) \longrightarrow H_2(X, X- \S') \longrightarrow H_1(X - \S', \mathbf{w}) \longrightarrow H_1(X) \oplus \ker(H_0(\mathbf{w} ) \to H_0(X))\longrightarrow 0.
\label{triple-les}
\end{equation}
Orient $H_1(X)$ arbitrarily and use Poincar\'e duality to orient $H_2(X)$. 
The ordering of the link components determines an orientation of $\ker(H_0(\mathbf{w} ) \to H_0(X))$. 
Because $X$ is closed and orientable, the pair $(X, \S')$ satisfies the following form of duality (see Theorem 3.44 in \cite{H1}):
\[H_2(X, X - \S') \cong H^1(\S').\]
Let $\widehat{\S'}$ denote the closed oriented surface given by capping $\S'$ off with a connected planar domain. 
Using the long exact sequence of the pair $(\widehat{\S'}, \S')$, we have 
\begin{align*}
    H_1(\widehat{\S'}) &\cong H_1(\S') \oplus H_1(\widehat{\S'}, \S') \\
    &\cong H_1(\S') \oplus \ker(H_0(\mathbf{w} ) \to H_0(X)).
\end{align*}
The isomorphism $H_1(\widehat{\S'}, \S') \to \ker(H_0(\mathbf{w} ) \to H_0(X))$ is given by the boundary homomorphism. 
Because $\widehat{\S'}$ is oriented, $H_1(\widehat{\S'})$ is a symplectic vector space with respect to the intersection form, and we declare a symplectic basis to be positively oriented. 
The ordering of the link components again determines an orientation of $\ker(H_0(\mathbf{w} ) \to H_0(X))$. 
Requiring the isomorphism above to be orientation-preserving then orients $H_1(\S')$. 
Therefore, every vector space other than $H_1(X - \S', \mathbf{w}) = H_1(U_\a, \mathbf{w})$ in the exact sequence of (\ref{triple-les}) is oriented. 
As in Section 2, (\ref{triple-les}) breaks up into the short exact sequences 
\[0 \longrightarrow \text{Im}(H_2(X, X-\S') \to H_1(X-\S', \bfw)) \longrightarrow H_1(X - \S', \bfw) \longrightarrow H_1(X) \oplus\ker(H_0(\bfw)\to H_0(X)) \longrightarrow 0\]
and
\[0 \longrightarrow H_2(X) \longrightarrow H_2(X, X-\S') \longrightarrow \text{Im}(H_2(X, X-\S') \to H_1(X-\S', \bfw)) \longrightarrow 0.\]
Again, we use the convention that if two of the vector spaces appearing in a short exact sequence are oriented, the orientation on the third is determined by fixing an isomorphism identifying the middle vector space with the direct sum of the left and the right, such that the isomorphism identifies this short exact sequence with the trivial one. 
Using these two short exact sequences, we obtain an orientation on $H_1(X - \S', \bfw ) = H_1(U_\a, \mathbf{w})$. 
As seen before, the resulting orientation is independent of the choice of orientation of $H_1(X)$. 
Similarly, it is also independent of the ordering of the link components. 

For an element $\mathbf{x} \in \bbT_\a \cap \widehat{M^R}$, let $\sign(\bfx) \in \{\pm1\}$ denote the resulting sign of $\bfx$ as an intersection point between $\bbT_\a$ and $\widehat{M^R}$ in $\text{Sym}^m(\S)$. 
The grading $\grh(\bfx) : \bbT_\a \cap \widehat{M^R} \to \bbZ/2$ is then defined as
\[\grh(\bfx) = 0\]
if and only if $\sign(\bfx) = 1$. 

In Section 7 of \cite{GM1}, it is described how to compute the grading of an intersection point in terms of intersection numbers of the alpha curves with the fixed set $C$ in $\S$. 
Let $\mathbf{x} \in \bbT_\a \cap \widehat{M^R}$ be a generator. 
Since $\t$ is orientation-reversing on $\S$, pulling back the orientation on $\S'$ to $\S$ splits $\S$ into two disjoint pieces, $\S^+$ and $ \S^-$, according to whether the orientation pulled back from $\S'$ agrees or disagrees with the orientation on $\S$. 
Using the decomposition of $\widehat{M^R}$ into strata, $\mathbf{x}$ is of the form
\[\mathbf{x} = \{z_1, \t(z_1), \ldots, z_k, \t(z_k), c_1, \ldots, c_{m - 2k}\} \in \text{Sym}^{k}(\S') \times \text{Sym}^{m - 2k}(\widehat{C}),\]
where
\[z_i \in \a_{r(i)} \cap \b_{\s(r(i))} \cap \S^+,  \hspace{.2cm}i = 1, \ldots, k\]
and 
\[c_j \in \a_{s(j)} \cap \b_{s(j)}, \hspace{.2cm}j = 1, \ldots, m -2k.\]
We also require that the points $c_1, \ldots, c_{m - 2k}$ appear on the components of $C$ in order, following the ordering and orientation of the link. 
Let $\e(z_i) \in \{\pm 1\}$ denote the sign of intersection between $\a_{r(i)}$ and $\b_{\s(r(i))}$ in $\S^+$, and let $\e(c_j)$ denote the sign of intersection between $\a_{s(j)}$ and $C$ using the orientation on $\S$. 
Finally, let $\e(r, \s, s)$ be the sign of the following permutation of $\{1, \ldots, m\}$:
\[(r(1), \s(r(2)), \ldots, r(k), \s(r(k)), s(1), \ldots, s(m - 2k)).\]
Proposition 7.3 in \cite{GM1} shows that the sign of $\mathbf{x}$ as an intersection point between $\bbT_\a$ and $\widehat{M^R}$ in $\text{Sym}^m(\S)$ is given by 
\begin{equation}
    \sign(\bfx) = \e(r, \s, s) \cdot \prod_{i = 1}^k\e(z_i) \cdot \prod_{j = 1}^{m - 2k}\e(c_j).
    \label{int-sign}
\end{equation}

To show that the grading $\grh$ is well-defined, namely that it depends only on the data of $(Y, \t)$, an ordering and orientation of the link components, and the primitive homology class $S \in H_2(X, L ; \bbZ)$, we need the following propositions. 

\begin{proposition}
    Let $L \subset X$ be an oriented nullhomologous link in a three-manifold $X$. 
    Suppose that $\S_1$ and $\S_2$ are two free Seifert surfaces for $L$ such that $[\S_1] = [\S_2]$ in $H_2(X, L ; \bbZ)$. 
    Then $\S_1$ and $\S_2$ are related by a sequence of stabilizations and destabilizations such that each intermediate Seifert surface is also free. 
    \label[proposition]{seifert-surfaces}
\end{proposition}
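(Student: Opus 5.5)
We reduce the statement to two known facts and then add the freeness bookkeeping. The first fact is that any two Seifert surfaces representing the same class in $H_2(X, L;\bbZ)$ are related by ambient isotopy together with a finite sequence of stabilizations (attaching a tube, i.e.\ a $1$-handle, along an arc in the complement) and destabilizations (compressing along a disk whose boundary is essential). This is the classical result for knots in $S^3$, going back to Levine and to Bar-Natan--Fulman--Kauffman. For links in a general closed $3$-manifold, the homological form is due to Baker and Motegi and to Friedl and collaborators ("Seifert surfaces in the same homology class are tube equivalent"). The second fact is that any Seifert surface becomes free after enough stabilizations. The plan is therefore to take an arbitrary tube-equivalence sequence $\S_1 = F_0, F_1, \ldots, F_n = \S_2$ and modify it so that every intermediate surface is free.

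\textbf{Step 1: stabilizing preserves freeness.} Let $F$ be free and let $F^+$ be $F$ with a tube attached along an arc $\g$ in $X \setminus \n(F)$ with endpoints on $F$. Then $X \setminus \n(F^+)$ is obtained from the handlebody $X \setminus \n(F)$ by removing a neighborhood of a properly embedded arc. We do not try to prove this is a handlebody for an arbitrary $\g$, since that fails for knotted arcs. Instead we choose the tubes so that the complement stays a handlebody. Concretely, $X\setminus \n(F)$ is a handlebody, so we fix a spine $\G$ for it. The key observation is that if $\g$ is isotopic rel endpoints to an arc parallel into $\partial \n(F)$, or to an arc in a compressing disk, then the complement of $\g$ is again a handlebody.

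\textbf{Step 2: make every surface in the sequence free.} Take the sequence $F_0,\ldots,F_n$ and a large common stabilization. At each $F_i$ we add extra tubes along arcs $\g_{i,1},\ldots,\g_{i,N}$ running along the edges of a spine of $X \setminus F_i$, where $X \setminus F_i$ is viewed as a $3$-manifold with boundary. This drilling makes the complement a handlebody, as in the standard proof that free Seifert surfaces exist via tubing along a Heegaard spine. We then check that two consecutive surfaces $F_i$ and $F_{i+1}$, after these stabilizations, are still related by a single tube or compression: the tubes are added in the complement of a small neighborhood of the tube or disk realizing the move $F_i \to F_{i+1}$, so the move commutes with the added tubes. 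We connect $\S_1$ to its heavily stabilized version by adding the tubes one at a time, each tube being a trivial (boundary-parallel) arc relative to the current free surface, so Step 1 applies. We treat $\S_2$ the same way in reverse.

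\textbf{Main obstacle.} The hard step is controlling freeness during the transitions. We must show that tubes can be added one at a time along a spine so that every partial stage is free, and that the spine arcs chosen for $F_i$ and for $F_{i+1}$ can be related through free surfaces. For the first point we would rely on Waldhausen's theorem: any Heegaard splitting of $S^3$-like pieces, and more generally handlebody complements built from unknotted arcs, remain handlebodies. We would order the drilling arcs so that each one is unknotted in the current complement, i.e.\ it cobounds a disk with an arc on the boundary. For the second point, we would use that two spines of the same compression body are related by edge slides. Each edge slide can be realized by adding a new tube and then removing an old one, and each intermediate surface is free because the intermediate complement is a handlebody with an extra boundary-parallel arc drilled.
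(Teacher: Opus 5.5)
\textbf{Verdict: there is a genuine gap.} Your first ingredient, tube-equivalence of homologous Seifert surfaces, is fine. The paper proves it directly: it writes $\Sigma_1\times 0\cup L\times I\cup -\Sigma_2\times 1=\partial W$ in $X\times I$ and reads the tubes off a handle decomposition of $W$. Your observation that tubing along a boundary-parallel arc preserves freeness is also fine. The gap is at the point you flag as the main obstacle, and the tools you propose there do not work.

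\textbf{Where Step 2 fails.} You choose freeing arcs $\gamma_{i,1},\dots,\gamma_{i,N}$ separately for each $F_i$, then assert that the move $F_i\to F_{i+1}$ ``commutes with the added tubes.'' But the stabilized $F_i$ and the stabilized $F_{i+1}$ carry different tube systems, so they are not related by a single tube. If you instead use one arc system for both, you must show it frees both complements at once. You must also show that the move between the two freed surfaces is itself a freeness-preserving stabilization. That is the whole content of the proposition.

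\textbf{Why the proposed repairs fail.} Your claim that two spines of the same compression body are related by edge slides does not apply, for two reasons. First, the spines for $F_i$ and $F_{i+1}$ live in different manifolds. Second, even for a single surface, two freeing systems generally give non-isotopic Heegaard splittings of $X$. Such splittings are only \emph{stably} equivalent, so slides alone cannot connect them. You also assert, without proof, that $\Sigma_1$ reaches its heavily stabilized version through tubes along boundary-parallel arcs; nothing in your argument shows the spine arcs are boundary-parallel. Waldhausen's theorem (uniqueness of Heegaard splittings of $S^3$) does not supply this.

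\textbf{What is missing.} You need a common framework in which the freeing choices for consecutive surfaces can be compared, plus a stable-equivalence input. The paper frees a surface $F$ using a triangulation $T$ of $X$ whose $1$-skeleton contains the core graph of $F$. It tubes along all edges of $T^{(1)}$ not in the core graph, so that $X\setminus\nu(F_T)$ is the handlebody $X\setminus\nu(T^{(1)})$. It then proves three things:

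(1) If $F$ is already free, then $F$ and $F_T$ differ by local stabilizations. This uses the Reidemeister--Singer argument: the splitting $\nu(F)\cup(X\setminus\nu(F))$ is stably equivalent to $\nu(T^{(1)})\cup(X\setminus\nu(T^{(1)}))$.

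(2) Two triangulations whose $1$-skeleta contain the same core graph give surfaces related by local stabilizations. This is shown via a common subdivision, where each extra tube is a local stabilization.

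(3) Suppose $F'$ is $F$ with a small unknotted tube $H$, and $T$ contains both core graphs and restricts to a triangulation of $H$. Then $F_T$ and $F'_T$ differ by a single local stabilization, checked locally near $H$.

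Since local stabilizations preserve freeness, chaining (1)--(3) along the tube sequence gives a path from $\Sigma_1$ to $\Sigma_2$ through free surfaces. The shared triangulation in (3) and the Reidemeister--Singer step in (1)--(2) are the ideas your outline lacks.
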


\begin{proof}
First, we claim that because $[\S_1] = [\S_2]$ in $H_2(X, L;\bbZ)$, they are related by a sequence of stabilizations and destabilizations. 
We follow the proof of the result in the case of Seifert surfaces of oriented knots in $S^3$ given in Section 9 of \cite{Gor1}.
Consider the embedded surface $M = (\S_1 \times 0) \cup (L \times I) \cup (-\S_2 \times 1) \subset X \times I$. 
Since $[\S_1] = [\S_2]$, $M$ is nullhomologous in $X \times I$, so $M = \partial W$ for some three-manifold $W \subset X \times I$.
In fact, we may take $W$ to be embedded; see page 50 of \cite{Kir1}.
Fix a handle decomposition of $W$ on $\S_1 \times 0$ with only 1- and 2-handles. 
Then the Seifert surface given by intersecting $W$ with $X \times \{t\}$ at a level $t$ in between the $1$- and $2$-handles is obtained from both $\S_1$ and $\S_2$ by adding or removing tubes.

Thus, there is a sequence of Seifert surfaces $\S_1 = F_0, F_1, \ldots, F_n = \S_2$ such that each $F_i$ is obtained from $F_{i-1}$ by attaching or removing a tube $H_i$. 
However, the intermediate Seifert surfaces need not be free. 
To remedy this, we proceed as follows. 
In Lemma 3.5 of \cite{GM1}, the authors describe a procedure for turning an arbitrary Seifert surface $F$ into a free Seifert surface $F_T$ in the same relative homology class. 
We recount the procedure here. Start by fixing a decomposition of $F$ into disks and bands. 
Let $\G \subset F$ be the embedded graph consisting of the cores of the disks and bands of $F$. 
Take a triangulation $T$ of $X$ such that the core graph $\G$ consists of the union of vertices and edges of $T$. 
The surface is then modified using $T$. 
For each vertex not in $F$, an additional disk is added, and for each edge not in $F$, do a band sum. 
For the new bands created, add a dual band to ensure that the boundary of the resulting surface $F_T$ is still isotopic to the original link. 
Note that this procedure amounts to adding tubes along all the edges of $T$ not already contained in $F$.

After an isotopy, we can assume that the tube $H_i$ that is attached (or removed) from $F_{i-1}$ to get to $F_i$ is small and unknotted. 
More precisely, there exists a small ball containing $H_i$, inside of which $H_i$ is unknotted; see Figure \ref{fig:add-tube} below.
For each $i > 0$, let $T_i$ be a triangulation of $X$ satisfying the following two properties:
\begin{enumerate}[(a)]
    \item The core graph $\G_i$ of $F_i$ is a subcomplex of $T_i$.
    \item The triangulation $T_i$ restricts to a triangulation of the tube $H_i$. 
\end{enumerate}

\begin{figure}[htbp]
    \centering
    \def\svgwidth{.62\textwidth}
\begingroup%
  \makeatletter%
  \providecommand\color[2][]{%
    \errmessage{(Inkscape) Color is used for the text in Inkscape, but the package 'color.sty' is not loaded}%
    \renewcommand\color[2][]{}%
  }%
  \providecommand\transparent[1]{%
    \errmessage{(Inkscape) Transparency is used (non-zero) for the text in Inkscape, but the package 'transparent.sty' is not loaded}%
    \renewcommand\transparent[1]{}%
  }%
  \providecommand\rotatebox[2]{#2}%
  \newcommand*\fsize{\dimexpr\f@size pt\relax}%
  \newcommand*\lineheight[1]{\fontsize{\fsize}{#1\fsize}\selectfont}%
  \ifx\svgwidth\undefined%
    \setlength{\unitlength}{535.01847875bp}%
    \ifx\svgscale\undefined%
      \relax%
    \else%
      \setlength{\unitlength}{\unitlength * \real{\svgscale}}%
    \fi%
  \else%
    \setlength{\unitlength}{\svgwidth}%
  \fi%
  \global\let\svgwidth\undefined%
  \global\let\svgscale\undefined%
  \makeatother%
  \begin{picture}(1,0.35272899)%
    \lineheight{1}%
    \setlength\tabcolsep{0pt}%
    \put(0,0){\includegraphics[width=\unitlength,page=1]{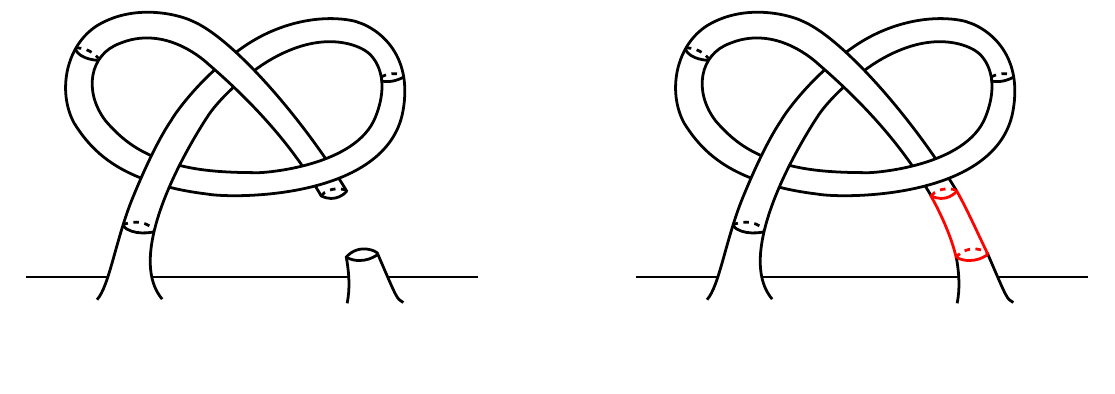}}%
    \put(0.17592694,0.03808601){\color[rgb]{0,0,0}\makebox(0,0)[lt]{\smash{\begin{tabular}[t]{l}$F_{i-1}$\end{tabular}}}}%
    \put(0.75113053,0.03808601){\color[rgb]{0,0,0}\makebox(0,0)[lt]{\smash{\begin{tabular}[t]{l}$F_i$\end{tabular}}}}%
    \put(0.89960301,0.14685305){\color[rgb]{0,0,0}\makebox(0,0)[lt]{\smash{\begin{tabular}[t]{l}$H_i$\end{tabular}}}}%
  \end{picture}%
\endgroup%

    \caption{Adding a small unknotted tube to $F_{i-1}$.}
    \label{fig:add-tube}
\end{figure}

Let $F_{i, T_j}$ be the surface obtained by performing the procedure from \cite{GM1} described above to $F_i$ using the triangulation $T_j$. 
Define a \emph{local stabilization} of a Seifert surface $F$ to be the connect sum of $F$ with $T^2$, where $T^2$ is an unknotted torus inside a ball. 
Note that freeness is preserved by local stabilizations.
We claim that there is a sequence of free Seifert surfaces 
\begin{equation}
    \S_1 = F_0, F_{0, T}, F_{0, T_1}, F_{1, T_1}, F_{2, T_1}, F_{2, T_2}, \ldots, F_{n, T_n}, F_{n, T'}, F_n = \S_2
    \label{eqn:seifert-surface-seq}
\end{equation}
such that consecutive surfaces are related by some sequence of local stabilizations and destabilizations. 
Here, $T$ and $T'$ will be some triangulations of $X$ satisfying (a) for $\S_1$ and $\S_2$, respectively. Showing the existence of a sequence as in (\ref{eqn:seifert-surface-seq}) amounts to proving the following three results:

\begin{enumerate}[(1)]
    \item For a free Seifert surface $F$, there exists a triangulation $T$ of $X$ satisfying (a) such that $F$ and $F_T$ are related by local stabilizations and destabilizations.
    \item Given two triangulations $T$ and $T'$ satisfying (a) for the same Seifert surface $F$, the resulting surfaces $F_T$ and $F_{T'}$ are related by local stabilizations and destabilizations.
    \item Let $F$ and $F'$ be two Seifert surfaces related by the addition of a small tube $H$ as in Figure \ref{fig:add-tube}. Let $T$ be a triangulation satisfying both (a) and (b) for the surface $F'$ and tube $H$. Then $F_T$ and $F'_T$ are related by local stabilizations and destabilizations. 
\end{enumerate}

The proofs of (1) and (2) closely resemble the proof of the Reidemeister-Singer theorem in \cite[Theorem~1.2]{S1}.
Let $F$ be a free Seifert surface, so that $F$ determines a Heegaard splitting $X = \n(F) \cup (X\setminus \nu(F))$. Let $\G$ be the core graph of $F$ (and also of $\n(F)$), and let $T$ be a triangulation of $X$ such that $\G$ is a subcomplex.
In \cite{S1}, it is shown that by subdividing $T$ if needed, this Heegaard splitting is stably equivalent to the Heegaard splitting $X = \n(T^{(1)}) \cup (X \setminus\n(T^{(1)}))$. 
Therefore, there exists a sequence of unknotted handles $h_1, \ldots, h_k$ such that $\n(T^{(1)})$ is obtained from $\n(F)$ by attaching these handles in order. 
Note that $\n(T^{(1)})$ is equal to $\n(F)$ with solid handles attached along the edges in $T^{(1)}\setminus \G$, and attaching these solid handles amounts to a sequence of stabilizations of the Heegaard splitting. 
Following the same order, use the core arcs of each $h_i$ to attach a sequence of unknotted tubes to $F$. Then this is a sequence of local stabilizations, and the result is precisely $F_T$. 

For (2), let $F$ be a Seifert surface, let $\G$ be the core graph of $F$, and let $T$ and $T'$ be two triangulations of $X$ both satisfying (a). 
Let $T''$ be a common subdivision of $T$ and $T'$. 
Then it suffices to check that $F_T$ and $F_{T''}$ are related by local stabilizations and destabilizations. 
We check this simplex by simplex. Regardless of which edges of a given simplex are already in $\G$, subdividing this simplex introduces additional edges along which tubes must be attached to obtain $F_{T''}$. 
Attaching these tubes consists of a number of local stabilizations; see Figure \ref{fig:subdiv} below for an example.

\begin{figure}[htbp]
    \centering
    \def\svgwidth{.7\textwidth}
    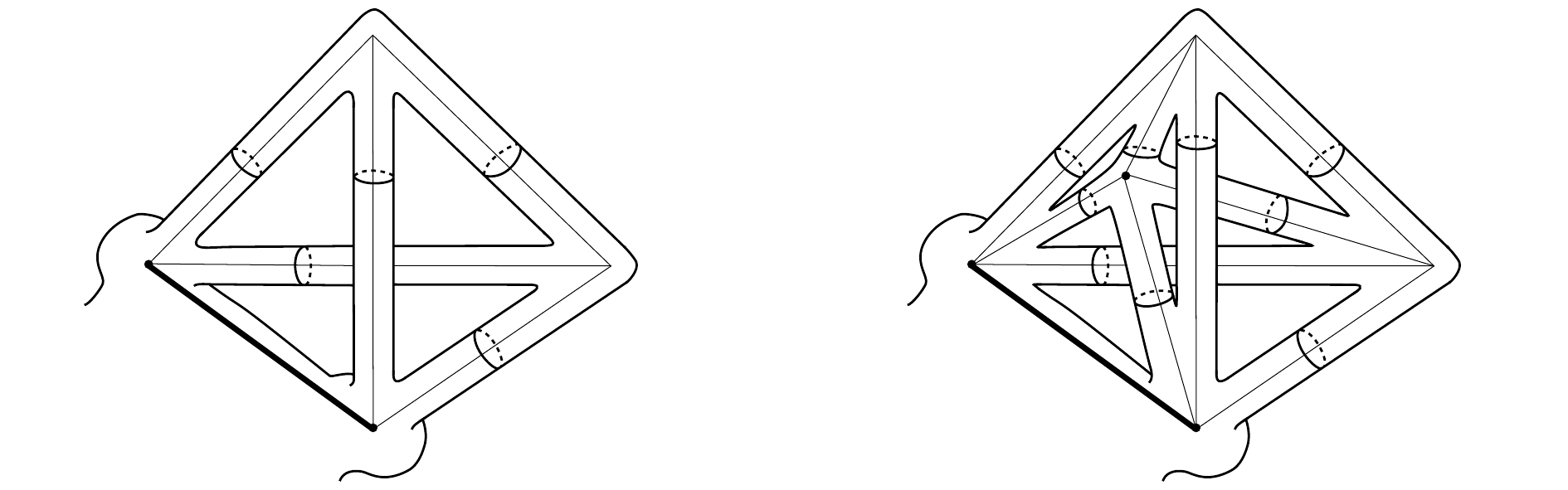
    \caption{Additional tubes coming from a subdivision of the triangulation. In this example, the bold edge in the bottom left is already contained in $\G$. Adding the new tubes can be viewed as a sequence of three local stabilizations.}
    \label{fig:subdiv}
\end{figure}

Finally, suppose that $F$ and $F'$ are Seifert surfaces related by adding a small tube $H$. Let $\G$ and $\G'$ be the core graphs of $F$ and $F'$.
Let $T$ be a triangulation satisfying (a) for $F'$ and its core graph $\G'$. 
Subdividing if needed, we may assume that $\G \subset \G' \subset T$. Viewing $H$ as a band and a dual band, the additional edges in $\G'$ consist of the cores of the these two bands. 
Now, consider the triangulation of $H$ pictured in Figure \ref{fig:H-triang}.
The cores of the bands forming $H$ are the union of the edges highlighted in purple. 
Further subdividing $T$ if needed, assume that this triangulation is a subtriangulation of $T$ restricted to $H$.
Assume for now that $T$ restricted to $H$ is precisely this triangulation. 

\begin{figure}[htbp]
    \centering
    \def\svgwidth{.5\textwidth}
    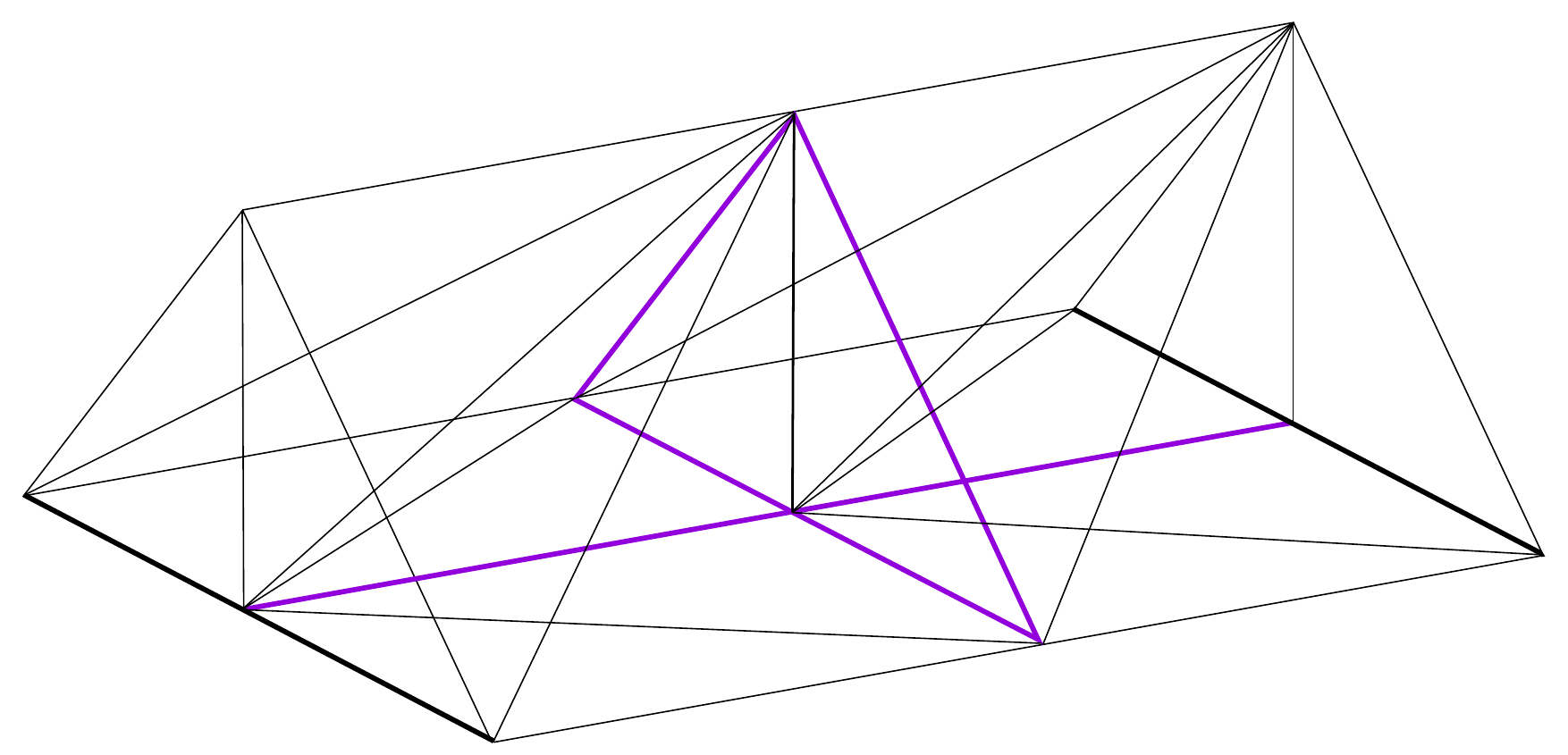
    \caption{A triangulation of $H$. The black bold edges are part of $\Gamma$ and the purple edges are $\G' \setminus \G$, namely the cores of the two bands forming $H$.}
    \label{fig:H-triang}
\end{figure}

The surfaces $F_T$ and $F'_T$ are formed by attaching tubes along the edges in $T^{(1)}\setminus \G$ and $T^{(1)}\setminus \G'$, respectively. 
Since $\G$ and $\G'$ only differ by the union of the edges forming the cores of the bands of $H$, the resulting surfaces $F_T$ and $F'_T$ only differ around where $H$ is attached. 
Using the triangulation of $H$ from Figure \ref{fig:H-triang}, we compare $F_T$ and $F_T'$ near $H$:

\begin{figure}[htbp]
    \centering
    \def\svgwidth{1.05\textwidth}
\begingroup%
  \makeatletter%
  \providecommand\color[2][]{%
    \errmessage{(Inkscape) Color is used for the text in Inkscape, but the package 'color.sty' is not loaded}%
    \renewcommand\color[2][]{}%
  }%
  \providecommand\transparent[1]{%
    \errmessage{(Inkscape) Transparency is used (non-zero) for the text in Inkscape, but the package 'transparent.sty' is not loaded}%
    \renewcommand\transparent[1]{}%
  }%
  \providecommand\rotatebox[2]{#2}%
  \newcommand*\fsize{\dimexpr\f@size pt\relax}%
  \newcommand*\lineheight[1]{\fontsize{\fsize}{#1\fsize}\selectfont}%
  \ifx\svgwidth\undefined%
    \setlength{\unitlength}{1951.3748193bp}%
    \ifx\svgscale\undefined%
      \relax%
    \else%
      \setlength{\unitlength}{\unitlength * \real{\svgscale}}%
    \fi%
  \else%
    \setlength{\unitlength}{\svgwidth}%
  \fi%
  \global\let\svgwidth\undefined%
  \global\let\svgscale\undefined%
  \makeatother%
  \begin{picture}(1,0.22695248)%
    \lineheight{1}%
    \setlength\tabcolsep{0pt}%
    \put(0,0){\includegraphics[width=\unitlength,page=1]{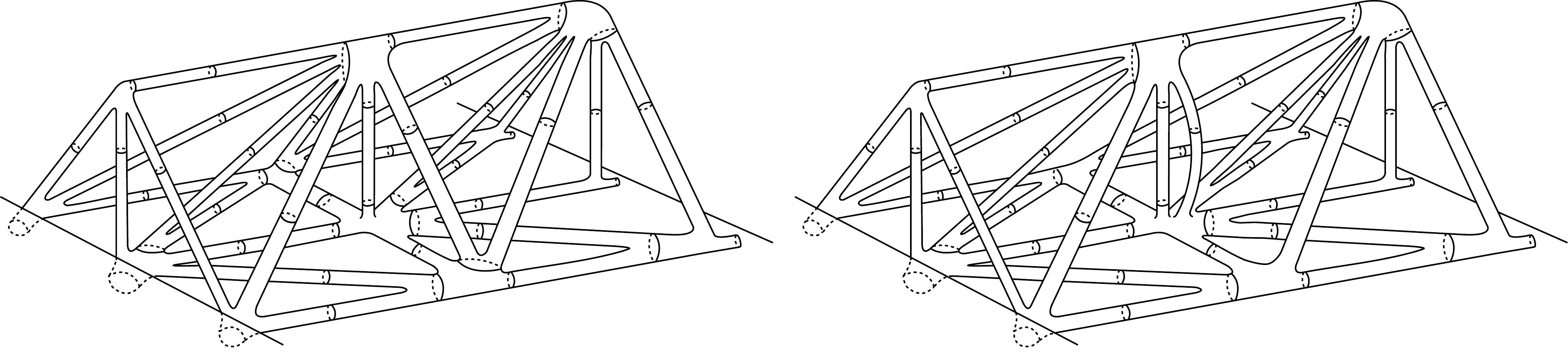}}%
    \put(0.21787351,0.00230238){\color[rgb]{0,0,0}\makebox(0,0)[lt]{\smash{\begin{tabular}[t]{l}$F_T$\end{tabular}}}}%
    \put(0.72434389,0.00230238){\color[rgb]{0,0,0}\makebox(0,0)[lt]{\smash{\begin{tabular}[t]{l}$F_T'$\end{tabular}}}}%
  \end{picture}%
\endgroup%

    \caption{The surfaces $F_T$ and $F_T'$ around $H$.}
    \label{fig:FTs}
\end{figure}

We see that $F_T$ and $F_T'$ differ by a single local stabilization. 
Earlier, we ignored additional edges of the triangulation $T$ restricted to $H$ coming from potential subdivisions. 
However, attaching tubes to $F$ and $F'$ for these additional edges amounts to doing some number of local stabilizations. Hence $F_T$ and $F_T'$ are related by local stabilizations.
\end{proof}

\begin{proposition}
    Let $(Y, \t, \bfw)$ be a double branched cover of $X$ along $L$. Let $H_1 = (\S_1, \bsa, \bsb, \bfw, \t)$ and $H_2 = (\S_2, \bsg, \bsd, \bfw, \t)$ be two multi-pointed real Heegaard diagrams for $(Y, \t, \bfw)$ such that $\S_1/\t$ and $\S_2/\t$ are both connected and orientable, and $[\S_1/\t] = [\S_2/\t]$ in $H_2(X, L ; \bbZ)$. 
    Then they are related by a series of Heegaard diagrams with orientable quotients all in that same relative homology class. 
\end{proposition}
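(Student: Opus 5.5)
Let $\S_1' = \S_1/\t$ and $\S_2' = \S_2/\t$ denote the quotient surfaces. The plan is to reduce the statement to \Cref{seifert-surfaces}. First I will show that for a real Heegaard diagram whose quotient is connected and orientable, the quotient is a free Seifert surface for $L$. The fixed set $C$ lies on $\S_i$, and $\t$ swaps $U$ and $\t(U)$. So the image of $\S_i$ in $X$ is a surface $F_i$ with $\partial F_i = L$. Its complement $X \setminus \n(F_i)$ is identified with the image of $U$ minus a collar, which is a handlebody. Hence $F_i$ is free, and after orienting it compatibly with $L$ we have $[F_1] = [F_2] = S$ by hypothesis.

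Next, I apply \Cref{seifert-surfaces} to get a sequence of free Seifert surfaces $F_1 = G_0, G_1, \ldots, G_N = F_2$. Consecutive surfaces in this sequence differ by a single (local) stabilization or destabilization, and all of them lie in the class $S$. By the construction recalled from Proposition 3.6 of \cite{GM1}, each $G_j$ yields a real Heegaard splitting of $(Y,\t)$ with Heegaard surface $\partial \n(G_j)$, whose quotient is $G_j$ and is therefore connected and orientable. It then remains to compare real Heegaard diagrams in two situations, each carried out through diagrams with orientable quotients in the class $S$. The first is diagrams for the same splitting $\partial\n(G_j)$. The second is diagrams for $\partial \n(G_j)$ and $\partial\n(G_{j+1})$. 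In both situations the real Heegaard moves should be performed $\t$-equivariantly.

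For a fixed surface $G$, any two real diagrams coming from the same real splitting differ by changing the alpha curves inside $U$, with the beta curves determined by $\t$. The handlebody version of the Reidemeister--Singer / Waldhausen argument gives that any two systems of attaching curves for $U$ (including the small curves around $w_2,\dots,w_l$) are related by isotopies and handleslides in $\S$ avoiding $\bfw$. Performing each such move on $\bsa$ and the mirrored move on $\bsb$ is a real Heegaard move, and it leaves the surface, and hence its quotient, unchanged. For a local stabilization $G_{j+1} = G_j \# T^2$ with $T^2$ an unknotted torus in a ball, the surface $\partial\n(G_{j+1})$ is obtained from $\partial \n(G_j)$ by two ordinary stabilizations exchanged by $\t$. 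This is a real stabilization: we add a pair $\a_{m+1}$, $\b_{m+1} = \t(\a_{m+1})$ and a pair $\a_{m+2}$, $\b_{m+2}$. The quotient changes from $G_j$ to $G_j \# T^2$, which is still connected, orientable, and homologous.

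The step I expect to be the main obstacle is this last identification. I need to check that a local stabilization of the Seifert surface really corresponds to a sequence of real Heegaard moves in the sense of \cite{GM1}, up to the handleslides of the previous paragraph. Concretely, the new alpha curves for $\S' \# T^2$ (meridian and longitude of the new handle, which bound compressing disks in the complement) must match the curves produced by a standard real stabilization. I would try to verify this by a direct local picture inside the ball containing $T^2$. I would then use the previous paragraph to absorb any discrepancy into handleslides and isotopies.
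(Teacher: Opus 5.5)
Your proposal is correct and is essentially the paper's argument: use \Cref{seifert-surfaces} to connect the quotient surfaces through free Seifert surfaces, lift each local stabilization to an orientable free (real) stabilization, and relate attaching curves on a fixed splitting by real isotopies and real handleslides. The paper frames this through Theorem~1 of \cite{N1}, which your route shows is not actually needed. You also make explicit two points the paper only asserts: that the quotients $\S_i/\t$ are free, and the local picture identifying a local stabilization with the real stabilization used later in the invariance proof (the meridian and longitude of the torus summand become alpha curves on the two $\t$-swapped sides).
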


\begin{proof}
    By Theorem 1 of \cite{N1}, since both $H_1$ and $H_2$ determine the zero framing on the fixed set $C \subset Y$, the two Heegaard splittings can be connected by a series of free stabilizations and diffeomorphisms. 
    There are two types of free stabilizations; we refer to these as orientable and nonorientable free stabilizations, depending on the orientability of the quotient of the stabilized surface. 
    An orientable free stabilization has the effect of performing a local stabilization on the quotient. 
    Since $[\S_1 / \t] = [\S_2 / \t]$, \Cref{seifert-surfaces} says they are related by a sequence of local stabilizations and destabilizations. 
    Upstairs, this means that $\S_1$ and $\S_2$ are related by orientable free stabilizations. 
    Once the Heegaard splitting is fixed, the sets of attaching curves can be transformed into each other by real isotopies and real handleslides as usual. 
\end{proof}

To show that this grading is well-defined, it therefore suffices to show invariance under real handleslides, real isotopies, and free orientable stabilizations of the real Heegaard diagram. 

\begin{proposition}
    The grading $\grh$ on $\widehat{\HFR}(Y, \t, \bfw)$ depends only on the orientation and ordering of the link components and the primitive homology class $S \in H_2(X, L ; \bbZ)$. 
\end{proposition}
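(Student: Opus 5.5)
By the preceding proposition, any two real Heegaard diagrams for $(Y,\t,\bfw)$ with connected orientable quotients in the class $S$ are related by real isotopies, real handleslides, and orientable free stabilizations, all through diagrams of this type. So the plan is to check that $\grh$ is preserved, on homology, by the isomorphism attached to each move. The model is the proof of \Cref{gr-inv}: first show the orientation of $H_1(U_\a,\bfw)$ built from (\ref{triple-les}) is compatible with the move, then use Maslov index parity to compare signs.

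\textbf{Isotopies.} A real isotopy of the alpha curves (with the beta curves moved by $\t$) does not change $\S'$, so it does not change the classes in $H_1(X-\S',\bfw)$. Hence the orientation of $\bbT_\a$ is carried along continuously. The orientation of $\widehat{M^R}$ depends only on $\S'$, $C$, $\bfw$ and the link data, so it is unchanged. The continuation map counts Maslov index zero strips between $\bbT_\a$ and $\widehat{M^R}$. The Lagrangian analogue of \Cref{maslov}, $(-1)^{\m(\phi)}=\sign(\bfx)\sign(\bfy)$ for oriented Lagrangians, then shows this map preserves $\grh$.

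\textbf{Handleslides.} Suppose $\a_1$ slides over $\a_2$ to give $\g_1=\a_1-\a_2$, with the other $\g_i$ small translates of $\a_i$. The classes $\{\g_i\}$ and $\{\a_i\}$ span the same subspace of $H_1(X-\S',\bfw)$, and the change of basis is unimodular upper triangular. So the induced orientations of $\bbT_\g$ and $\bbT_\a$ match the orientation coming from (\ref{triple-les}) in the same way. The invariance map in \cite{GM1} is a triangle count with a top generator $\boldsymbol{\Theta}\in\bbT_\g\cap\bbT_\a$ of positive sign. Applying the triangle part of \Cref{maslov} gives preservation of $\grh$. One must also check that the $l-1$ small curves around $w_2,\dots,w_l$ are handled correctly, for instance by sliding only curves from a basis of $H_1(\S')$ over one another or over the small curves.

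\textbf{Stabilizations.} This step is the main obstacle. An orientable free stabilization is a local stabilization $\S'\mapsto\S'\#T^2$ downstairs. Upstairs it adds two alpha curves, and $\t$ of these gives the two new beta curves; the new $\a$ and its $\t$-image meet in a new pair $\{z,\t(z)\}$ in the $\operatorname{Sym}^1(\S')$ stratum. I would order the new curves last and attach the new points to each generator. Then:
\begin{itemize}
\item Using (\ref{int-sign}), compute the change in $\sign$: it picks up a factor $\e(z)$ (and perhaps a second local sign), with the permutation sign $\e(r,\s,s)$ unchanged or changed by an explicitly computable amount.
\item Compute the change in the orientation of $H_1(U_\a,\bfw)$ from (\ref{triple-les}). $H^1(\S')$ gains a symplectic pair, oriented positively, and $H_2(X,X-\S')$ gains the corresponding dual pair. $H_1(X-\S',\bfw)$ gains the two new meridian/longitude-type classes, with $H_1(X)$ unchanged. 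Track the resulting block determinant as in the stabilization step of \Cref{gr-inv}.
\item Check that these two sign changes cancel. If a stray $(-1)$ appears, orient the new curves (choose which new curve is $\a_{m+1}$ versus $\a_{m+2}$) to absorb it, since the choice of orientations and ordering of alpha curves does not affect the construction.
\end{itemize}

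Finally, independence of the auxiliary choice of orientation on $H_1(X)$ was already noted. Combining the three moves gives the statement.
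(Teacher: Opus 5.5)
\textbf{Stabilization: the step that carries the content is missing, and your fallback cannot work.} Your reduction to the three moves and your isotopy step match the paper. The stabilization step, however, is the one place where an actual sign computation is needed, and you leave it as a to-do.

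The remedy you offer for a stray sign is not valid. The quantity to compare, $\sign(\bfx^+)$, is the intersection sign of $\bbT_\a^+$ and $\widehat{(M^+)^R}$ with their \emph{canonical} orientations. It therefore does not depend on which ordered, oriented set of alpha curves you use to represent the canonical orientation of $\bbT_\a^+$. Reversing a new curve, or swapping $\a_{m+1}$ and $\a_{m+2}$, flips the factor you read off from (\ref{int-sign}). It also flips the comparison between the product orientation and the canonical one. The product is unchanged, so a stray $(-1)$ cannot be absorbed; if one appeared, $\grh$ would simply fail to be invariant.

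Your local model also needs correcting. Each new alpha curve is disjoint from its own $\t$-image, since they lie on the two different tori being summed in. The new generator component is $\{p,\t(p)\}$ with $p\in\a_{m+1}\cap\t(\a_{m+2})$, and $\bfx\mapsto\bfx\times\{p,\t(p)\}$ is the bijection of generators.

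The computation you are missing runs as follows.
\begin{enumerate}[(i)]
\item Orient the new curves so that their images in $\S/\t$ satisfy $\a_{m+1}\cdot\a_{m+2}=1$. Appended last, this ordered set gives the canonical orientation of $\bbT_\a^+$; this is the point to check through (\ref{triple-les}).
\item In (\ref{int-sign}), whichever of $p,\t(p)$ lies in $\S^+$ contributes the permutation entries $(m+1,m+2)$ or $(m+2,m+1)$. It also contributes a local sign of $+1$ or $-1$, respectively, because the projection to $\S'$ preserves orientation on $\S^+$ and sends $\t(\a_{m+2})$ and $\a_{m+2}$ to the same oriented curve. These two signs cancel.
\item Sliding the adjacent pair past the $m-2k$ entries $s(j)$ is an even permutation.
\end{enumerate}
Hence $\sign(\bfx\times\{p,\t(p)\})=\sign(\bfx)$.

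\textbf{Handleslides: the problem is the map, not the orientation check.} The paper does not use a triangle map. Following \cite{Per1}, the symplectic form is chosen so that $\bbT_\a$ and $\bbT_{\a'}$ are Hamiltonian isotopic via $\Psi_t$. The invariance map $\G_{\Psi_t}$ counts Maslov index zero disks. The handleslide therefore reduces to the isotopy argument once you know that $\{\a_1\pm\a_2,\a_2,\ldots,\a_m\}$ induces the canonical orientation of $\bbT_{\a'}$.

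Your unimodular change-of-basis observation supplies exactly that check. You should phrase it through the dual classes, though: the alpha curves themselves bound disks in $U_\a$, so they vanish in $H_1(X-\S',\bfw)$.

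By contrast, the paper gives no argument that a triangle map with boundary on $\bbT_\g$, $\bbT_\a$, $\widehat{M^R}$ is an isomorphism on $\widehat{\HFR}$; it cites \cite{GM1} for the invariance maps and uses the continuation map. The sign of $\boldsymbol{\Theta}$ would also need verifying. As written, this step rests on an unproved claim. Replacing the triangle map by the continuation map, with your same orientation check, repairs it.
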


\begin{proof}
    Let $(\S, \bsa, \bsb, \bfw, \t)$ be a real Heegaard diagram for $(Y, \t, \bfw)$ with $[\S/\t] = S$. Let $g$ denote the genus of $\S$ and let $l$ be the number of link components, so that there are $m = g + l-1$ many alpha curves. 
    Order and orient the alpha curves so that they induce the orientation of $\bbT_\a$ specified in the definition of $\grh$. 
    In Section 5 of \cite{GM1}, it is shown that real Heegaard moves give rise to maps inducing chain homotopy equivalences between the chain complexes associated to each real Heegaard diagram. 
    We show that the absolute grading on $\widehat{\CFR}$ is preserved under these maps. 
    
    A real isotopy does not change the homology classes of the alpha curves, so the canonical orientation of the isotoped alpha curves agrees with the orientation induced by the isotopy. 
    Invariance of $\grh$ then follows from \Cref{maslov} and the fact that the map showing invariance of real Heegaard Floer homology under real isotopies counts disks with Maslov index zero. 

    Let $(\S, \bsa', \bsb', \bfw, \t)$ be the diagram obtained by real handlesliding $\a_1$ over $\a_2$. 
    Following the proof of Theorem 1.2 in \cite{Per1}, the symplectic form $\omega$ can be chosen so that $\bbT_\a$ and $\bbT_\a'$ are Hamiltonian isotopic via $\Psi_t$. 
    By the construction of this isotopy, $\a_1' = \Psi_1(\a_1)$ is isotopic to $\a_1 \pm \a_2$ (where the sign depends on the orientation of $\a_2$). 
    It is straightforward to verify that $\{\a_1 \pm \a_2, \a_2, \ldots, \a_{m}\}$ induces the canonical orientation on $\bbT_\a'$. 
    The homotopy equivalence $\G_{\Psi_t}:\widehat{\CFR}(\S, \bsa, \bsb,\bfw, \t) \to \widehat{\CFR}(\S, \bsa', \bsb', \bfw, \t)$ is defined by counting disks with Maslov index zero, so it preserves $\grh$. 

    Finally, let $(\S_+, \bsa^+, \bsb^+, \bfw, \t)$ be the diagram obtained by a free orientable stabilization.
    This diagram is obtained by taking the connect sum with two standard genus 1 Heegaard diagrams for $S^3$. 
    The involution $\t$ is extended over the new handles by sending the alpha curve on one of the diagrams to the beta curve on the other. 
    Label the two new alpha curves $\a_{m+1}$ and $\a_{m+2}$, and label the new beta curves by $\b_{m+1} = \t(\a_{m+1})$ and $\b_{m+2} = \t(\a_{m+2})$. Let $p$ be the unique intersection point between $\a_{m+1}$ and $\b_{m+2}$, so that $\t(p)$ is the intersection point between $\a_{m+2}$ and $\b_{m+1}$. 
    In the quotient $\S/\t$, the images of these curves intersect exactly once, and are disjoint from the images of all the other alpha curves. Orient them so that the images in the quotient $\S/\t$ satisfy $\a_{m+1} \cdot \a_{m+2} = 1$. 
    Then the resulting set of ordered and oriented alpha curves determines the canonical orientation of $\bbT_\a^+$. 
    The orientation of $\S_+/\t$ is still determined by the orientation and ordering of $L$. Let $M^+ = \text{Sym}^{m+2}(\S_+)$, and let $\widehat{(M^+)^R}$ be the fixed point set minus the divisor $\bfw \times \text{Sym}^{m+1}(\S_+)$. 
    There is a 1-1 correspondence between points in $\bbT_\a \cap \widehat{M^R}$ and $\bbT_\a^+ \cap \widehat{(M^+)^R} $: a point $\bfx \in \bbT_\a \cap \widehat{M^R}$ corresponds to the point $\bfx \times \{p, \t(p)\} \in \bbT_\a^+ \cap \widehat{(M^+)^R}$. 
    Using (\ref{int-sign}) to compute the signs of these intersection points, we have that $\sign(\bfx) = \sign(\bfx \times \{p ,\t(p)\})$.  
\end{proof}

Although this grading may depend on the ordering and orientation of the link components, it is straightforward to show that it is unchanged if we replace $L$ with its reverse $rL$ and reverse the ordering. 

\begin{proposition}
    Let $L$ be an oriented link and fix a primitive class $S \in H_2(X, L ; \bbZ)$ satisfying $\partial S = [L]$. 
    Let $(Y, \t)$ be the double branched cover of $X$ along $L$ corresponding to the image of $S$ in $H_2(X, L;\bbZ/2)$. 
    Fix an ordering of the link components and a set of basepoints $\bfw \subset Y$.
    Then the absolute $\bbZ/2$ grading defined on $\widehat{\HFR}(Y, \t, \bfw)$ using the ordered and oriented link $L$ and class $S$ is the same as the one using the link $rL$ with the ordering reversed and the class $-S$.  
    \label[proposition]{reverse-it}
\end{proposition}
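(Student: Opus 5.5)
Since the grading is well-defined by the preceding proposition, we may compute both gradings using the same real Heegaard diagram $(\S, \bsa, \bsb, \bfw, \t)$ built from a free Seifert surface $\S'$ with $[\S'] = S$; the same surface, with its orientation reversed, represents $-S$ and has boundary $rL$. We keep the same ordered, oriented alpha curves. For each generator $\bfx$ we then compare the two signs $\sign(\bfx)$. Each sign is a product of two factors: the orientation of $\widehat{M^R}$, and the orientation of $\bbT_\a$, that is, of $H_1(U_\a,\bfw)$. It suffices to show the two changes of sign cancel.

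\textbf{Step 1: the orientation of $\widehat{M^R}$.} On a stratum $\text{Sym}^k(\S') \times \text{Sym}^{m-2k}(\widehat{C})$, reversing $\S'$ swaps the roles of $\S^+$ and $\S^-$. I will argue this through formula (\ref{int-sign}). We replace each $z_i$ by $\t(z_i)$, which now lies in the new $\S^+$. This interchanges $r(i)$ and $\s(r(i))$ in the permutation, and it changes $\e(z_i)$ to the sign of $\a_{\s(r(i))}\cap\b_{r(i)}$ at $\t(z_i)$. Since $\t$ reverses the orientation of $\S$ and swaps alphas and betas, these two changes should cancel pair by pair; this needs to be checked.

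For the $c_j$, reversing $C$ flips each $\e(c_j)$, giving a factor $(-1)^{m-2k} = (-1)^m$. Reversing the ordering of the components, together with reversing the direction along each component, reverses the order of the list $c_1,\ldots,c_{m-2k}$. This changes $\e(r,\s,s)$ by the sign of the reversal permutation on $m-2k$ letters, namely $(-1)^{(m-2k)(m-2k-1)/2}$.

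\textbf{Step 2: the orientation of $H_1(U_\a,\bfw)$.} We track the exact sequence (\ref{triple-les}), using the convention that any two oriented terms of a short exact sequence orient the third.
\begin{itemize}
\item Reversing $\S'$ reverses the duality isomorphism $H_2(X, X-\S') \cong H^1(\S')$, up to the orientation of $H^1(\S')$.
\item The capped surface $\widehat{\S'}$ is reversed, so its symplectic orientation changes by $(-1)^{g'}$, where $g'$ is the genus of $\widehat{\S'}$.
\item Reversing the ordering of the $l-1$ generators of $\ker(H_0(\bfw)\to H_0(X))$ contributes $(-1)^{(l-1)(l-2)/2}$, relative to the fixed ordering of basepoints. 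The way the boundary map identifies $H_1(\widehat{\S'},\S')$ with this kernel also enters here, and that sign must be tracked too.
\end{itemize}
Since the kernel term occurs both in the orientation of $H_1(\S')$ and in the last term of (\ref{triple-les}), its reordering sign appears twice and cancels. What remains is a global sign in terms of $g'$, $l$, and $\dim H^1(\S') = 2g'+l-1$. We have $m = 2g'+l-1$, because $\S$ is the double of $\S'$.

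\textbf{Step 3: comparison.} The main obstacle is showing that the Step 1 sign
\[
(-1)^{m}(-1)^{(m-2k)(m-2k-1)/2}
\]
is independent of $k$ and equals the Step 2 sign. The dependence on $k$ looks worrying. However, $(m-2k)(m-2k-1)/2 \equiv m(m-1)/2 + k \pmod 2$, up to terms of the form $2k^2$ and $2km$, so there is a residual factor $(-1)^k$.

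I expect this factor to be absorbed in Step 1. When $k$ pairs are each swapped ($z_i \leftrightarrow \t(z_i)$), each swap is a transposition in the permutation, and each sign $\e$ may pick up an extra $-1$ because $\t$ reverses orientation. The net per-pair factor would then be $-1$, cancelling the residual $(-1)^k$.

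To settle this, I would first verify the formula on the simplest cases: the unknot with the standard genus-zero diagram, and a stabilization. These fix the normalizations. I would then do the general parity bookkeeping, concluding that the total change of sign is $+1$ for every generator, so $\grh$ is unchanged.
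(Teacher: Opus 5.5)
Your plan is the paper's: fix one diagram built from $\S'$, keep the same ordered, oriented alpha curves, and compare the change in the orientation of $\bbT_\a$ (through (\ref{triple-les})) with the change in (\ref{int-sign}). However, you stop before the decisive parity count, and two of the inputs you would feed into it are wrong, so as written the argument does not close.

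\textbf{The number of alpha curves.} If $h$ is the genus of $\S'$, then $\widehat{\S'}$ has genus $g'=h+l-1$. The Heegaard surface $\S$, the double of $\S'$, has genus $g=2h+l-1$. So $m=g+l-1=2h+2l-2=2g'$. You have conflated $g$ with $m$; also $\dim H^1(\S')=2h+l-1=2g'-(l-1)$, not $2g'+l-1$. Thus $m$ is always even. Flipping every $\e(c_j)$ therefore costs nothing, and reversing the list $c_1,\ldots,c_{m-2k}$ costs $(-1)^{(m-2k)(m-2k-1)/2}=(-1)^{g'-k}$. If you carried your value of $m$ through, the $\widehat{M^R}$ side would come out as $(-1)^{m(m+1)/2}$. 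That does not match the correct $\bbT_\a$-side sign $(-1)^{g'}$ when, for example, $l=2$.

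\textbf{The per-pair sign.} You state this in two incompatible ways. In Step 1 the changes for $z_i,\t(z_i)$ ``cancel''. In Step 3 both the transposition and $\e$ pick up a $-1$, yet the net is supposedly $-1$. The correct computation: $\t$ reverses the orientation of $\S$, but it also carries the ordered pair $(\a_{r(i)},\b_{\s(r(i))})$ at $z_i$ to $(\b_{r(i)},\a_{\s(r(i))})$ at $\t(z_i)$. These two effects cancel, so $\e(\overline{\t(z_i)})=\e(z_i)$. The only per-pair change is the transposition $r(i)\leftrightarrow\s(r(i))$. Hence the $\widehat{M^R}$ side changes by $(-1)^k(-1)^{g'-k}=(-1)^{h+l-1}$, independent of $k$. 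This is the paper's statement that the permutation sign becomes $(-1)^{h+l-1}\e$.

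\textbf{The $\bbT_\a$ side.} The duality isomorphism $H_2(X,X-\S')\cong H^1(\S')$ depends only on the orientation of $X$. The boundary identification of $H_1(\widehat{\S'},\S')$ with $\ker(H_0(\bfw)\to H_0(X))$ uses no orientation at all. So neither contributes a sign. The kernel's reordering sign cancels, as you say. The only surviving factor is $(-1)^{g'}=(-1)^{h+l-1}$, from negating the symplectic form on $H_1(\widehat{\S'})$.

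The two signs cancel for every generator, and this is the paper's proof. No checks on the unknot or on stabilizations are needed.
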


\begin{proof}
    Take a Seifert surface $\S'$ representing $S$ and construct a real Heegaard splitting of $(Y, \t, \bfw)$ following \cite{GM1}. 
    Choose alpha curves and let $\t$ determine the beta curves. 
    Orient $\S'$ so that the orientation induced on its boundary agrees with the one on $L$. 
    Along with the ordering of the link components, this determines an orientation of $\widehat{M^R}$. 
    Order and orient the alpha curves so that they induce the canonical orientation of $\bbT_\a$. 
    Using the involution $\t$, we get an ordering and orientation of the beta curves as well. Let $\mathbf{x} \in \bbT_\a \cap \widehat{M^R}$, so that $\mathbf{x}$ is of the form 
    \[\bfx = \{z_1, \t(z_1), \ldots, z_k, \t(z_k), c_1, \ldots, c_{g - 2k}\} \in \text{Sym}^{k}(\S') \times \text{Sym}^{m - 2k}(\widehat{C}).\]
    If we reverse the orientation of every component of $L$, observe that $\overline{\S'}$ is a Seifert surface for $rL$, where $\overline{\S'}$ denotes $\S'$ with the opposite orientation. 
    Thus, using $rL$ (with the order reversed) to orient the Lagrangians will affect the orientation on $\widehat{M^R}$; denote the result $\widehat{\overline{M^R}}$. 
    Following the definition of $\grh$, the correct ordering and orientation of the alpha curves using the new ordering and orientation of $L$ will remain the same if and only if $(-1)^{h+l-1} = 1$, where $l$ is the number of link components and $h$ is the genus of $\S'$. 
    
    Let $\overline{\bfx}$ denote the corresponding generator in $\bbT_\a \cap \widehat{\overline{M^R}}$. 
    To distinguish the orientations we are using to talk about the sign of an intersection point, a point with an overline always refers to that point with the orientations induced by $rL$ and the reverse ordering. 
    Then to compute the sign of $\overline{\bfx}$ as an intersection point of $\bbT_\a \cap \widehat{\overline{M^R}}$ using (\ref{int-sign}), we write it as follows, following the conventions in \cite{GM1}:
    \[\overline{\bfx} = \{\overline{\t(z_1)}, \overline{z_1}, \ldots, \overline{\t(z_k)}, \overline{z_k}, \overline{c_{m-2k}}, \ldots, \overline{c_{1}}\} \in \text{Sym}^{k}(\overline{\S'}) \times \text{Sym}^{m - 2k}(\widehat{rC}).\]
    This is because if we pull back the orientation on $\overline{\S'}$ to $\S$, the roles of $\S^+$ and $\S^-$ swap. 
    Additionally, reversing the ordering and orientation of each link component means we encounter the $c_i$ in the reverse order.
    We have that $\e(\overline{\t(z_i)}) = \e(z_i)$ and $\e(c_i) = -\e(\overline{c_i})$. Let $\e \in \{\pm 1\}$ denote the sign of the permutation $\e(r, \s, s)$ for $\bfx$, so that the sign of the corresponding permutation for $\overline{\bfx}$ is $ (-1)^{h+l-1}\e$. 
    It follows that $\text{sgn}(\bfx) = \text{sgn}(\overline{\bfx})$.  
\end{proof}

\mycomment{
\begin{remark}
    In general, this grading does indeed depend on the ordering and orientation of the link components. Varying these choices for the Hopf link in $S^3$ gives rise to different absolute $\bbZ/2$ gradings on $\widehat{\CFR}(Y, \t)$. It is currently unknown whether or not the grading actually depends on the choice of homology class $S \in H_2(X, L ; \bbZ)$.
\end{remark}}

We conclude this section by computing $\grh$ for the double branched cover of the Hopf link in $S^3$ with its two possible choices of orientations. 

\begin{example} Let $H^\pm$ denote the Hopf link with linking number $\pm 1$, and consider the diagrams for $H^\pm$ in Figure \ref{fig:hopf-links}. 

\begin{figure}[htbp]
    \centering
    \def\svgwidth{.6\textwidth}
    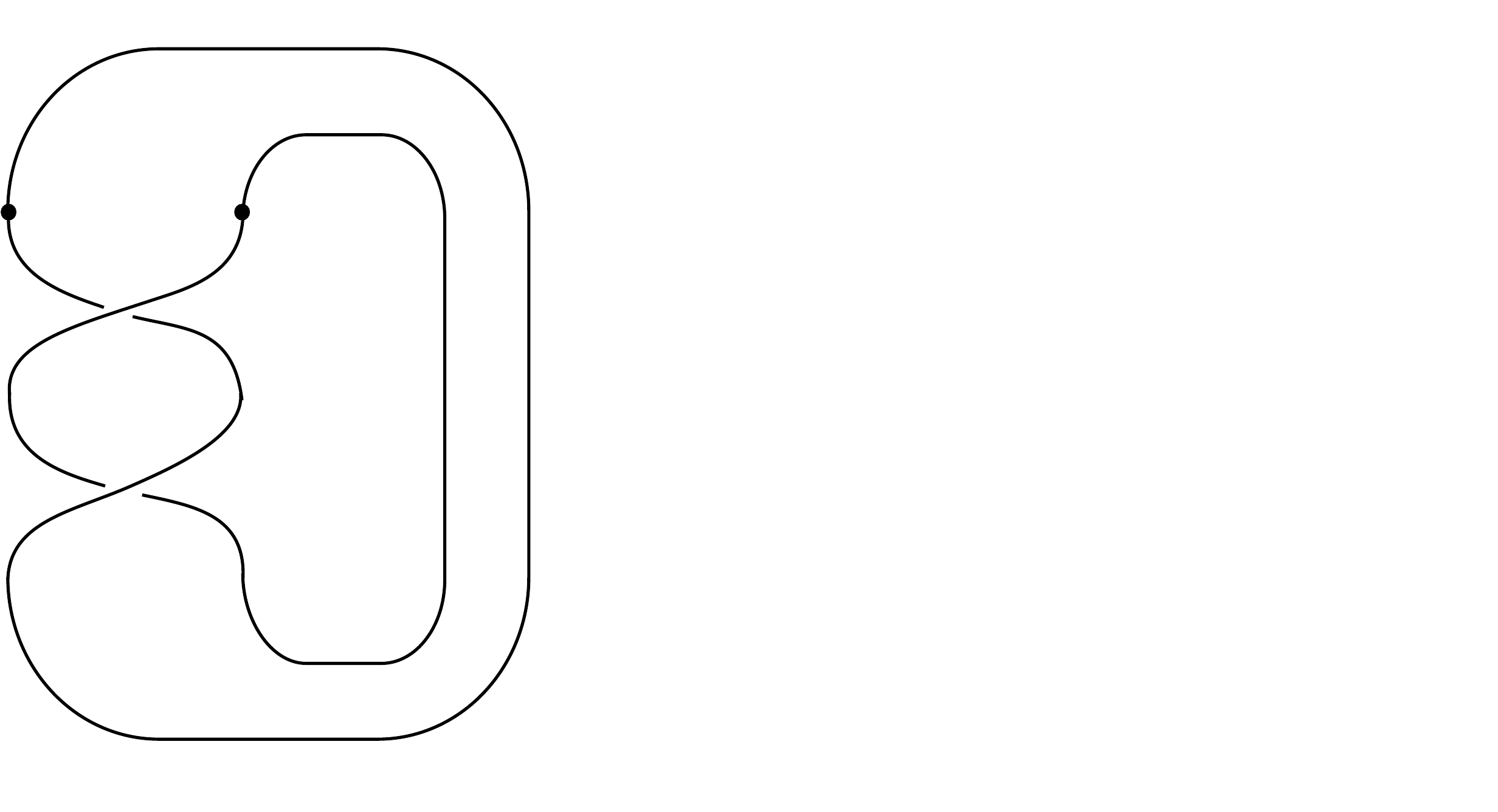
    \caption{The Hopf link with its two choices of orientations.}
    \label{fig:hopf-links}
\end{figure}

We have additionally fixed an ordering of the components and a basepoint on each component. 
Applying Seifert's algorithm, we obtain Seifert surfaces $(\S')_\pm$ for $H^\pm$. The curves $\a_1^\pm$ and $\g^\pm$ determine a basis for $H_1(\widehat{(\S')_\pm})$, where $\widehat{(\S')_\pm}$ is the surface obtained by capping $(\S')_\pm$ off with a connected planar domain. 

\begin{figure}[htbp]
    \centering
    \def\svgwidth{.62\textwidth}
    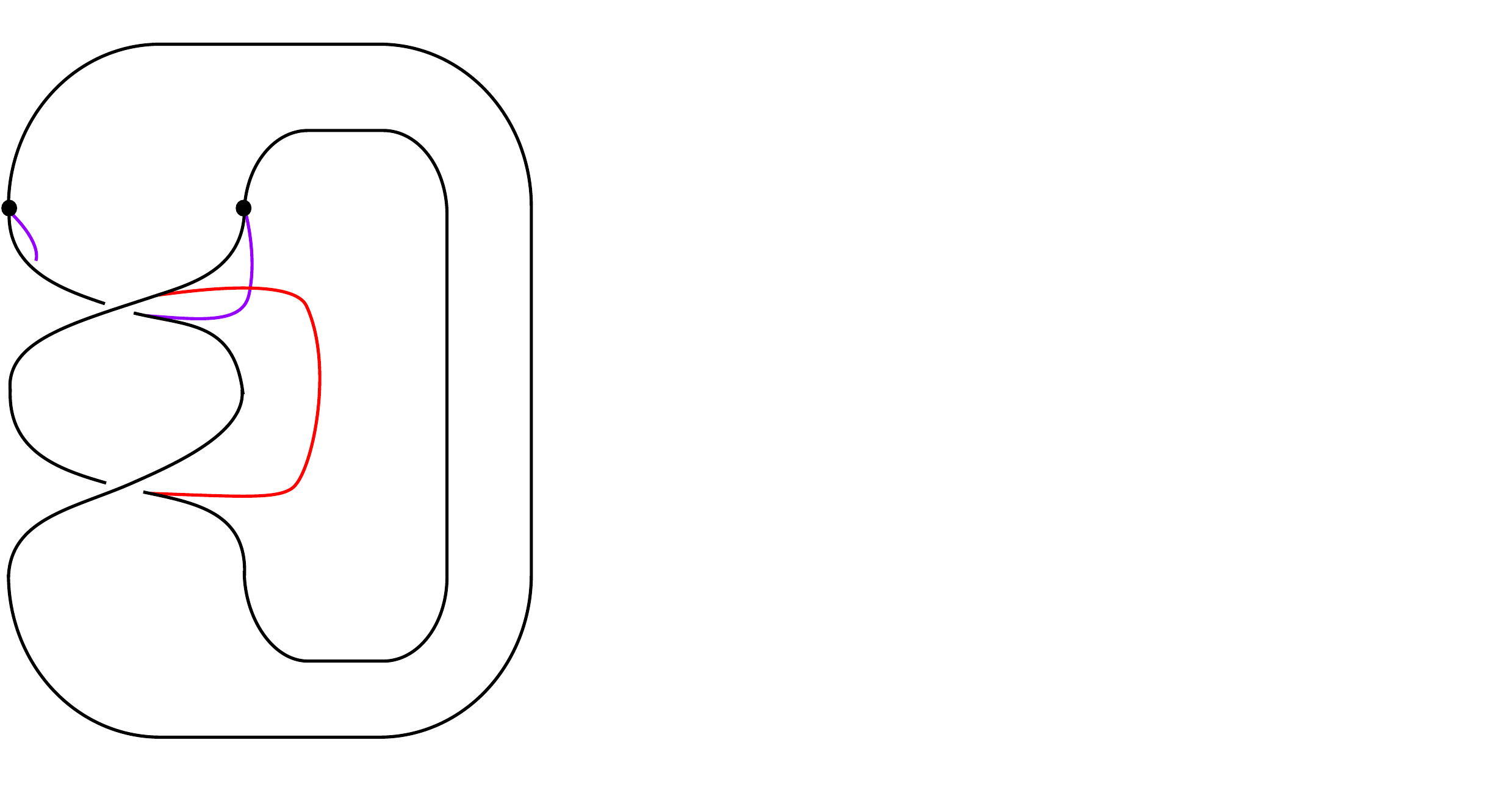
    \caption{Seifert surfaces $\S_\pm'$ for the two Hopf links.}
    \label{fig:hopf-link-seifert-surface}
\end{figure}

Tracing through the isomorphisms
\begin{align*}
    H_1(U_\a^\pm, \bfw^\pm) &\cong H_1((\S')_\pm)\oplus \ker(H_0(\bfw^\pm) \to H_0(U_\a^\pm)) \\
    &\cong H_1((\S')_\pm)\oplus H_1(\widehat{(\S')_\pm}, (\S')_\pm)\\
    &\cong H_1(\widehat{(\S')_\pm}), 
\end{align*}
the curve $\g^\pm$ first gets mapped to a curve $\z^\pm$ which connects $w_1$ to $w_2$ and is contained entirely on the connected planar domain. 
This then gets mapped to the curve $\z^\pm \cup -\g^\pm$ on $\widehat{(\S')_\pm}$. 
With the orientations above, $\a_1^\pm \cdot \g^\pm = -1$, so that $\langle \a_1^\pm, \z^\pm \cup -\g^\pm\rangle $ is a symplectic basis for $H_1(\widehat{(\S')_\pm})$. 
Letting $\a_2^\pm$ be a simple closed curve encircling $w_2$ and satisfying $\a_2^\pm \cdot \g^\pm = 1$, we obtain the two Heegaard diagrams in \Cref{fig:hopf-link-heeg-diags}, which represent $(\S_\pm, \bsa^\pm, \bsb^\pm, \bfw^\pm, \t^\pm)$. 
Furthermore, the pictured orientations in \Cref{fig:hopf-link-heeg-diags} determine the orientation of $\bbT_\a^\pm$ used to compute $\grh$. 

\begin{figure}[htbp]
    \centering
    \def\svgwidth{.9\textwidth}
    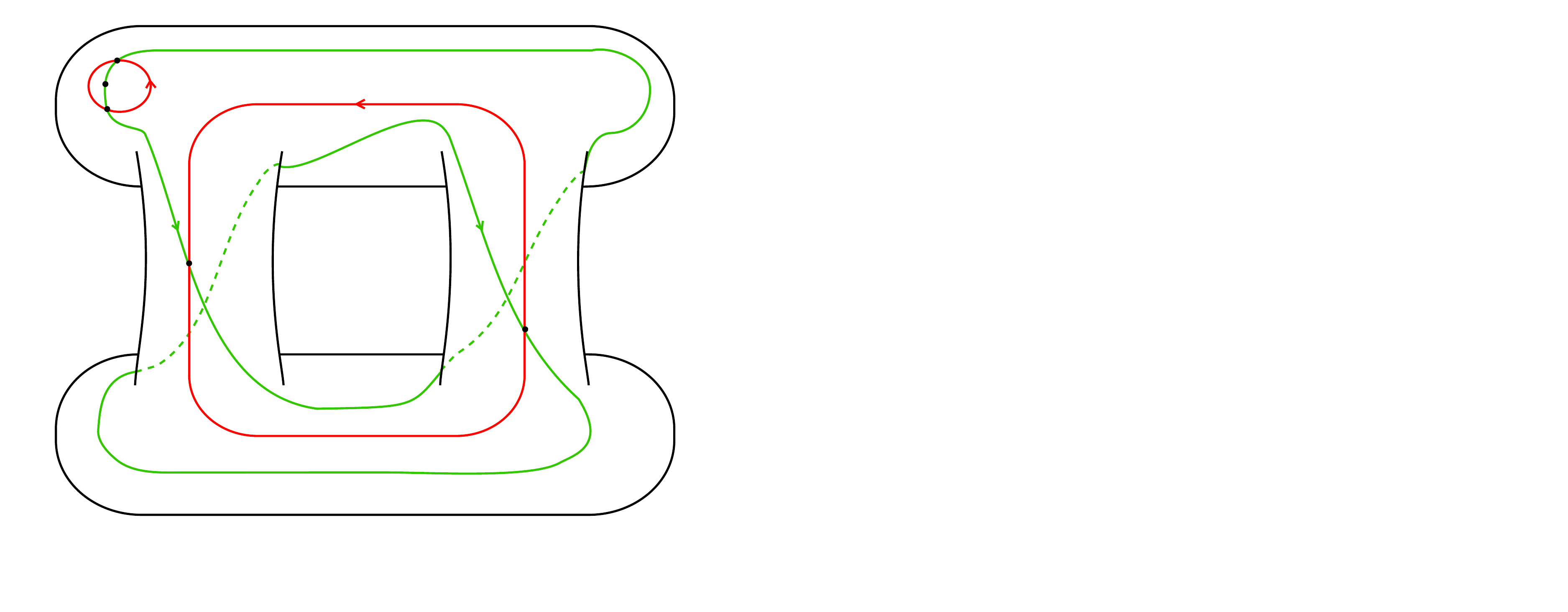
    \caption{Heegaard diagrams for $(\S_2(H^\pm), \t^\pm)$ built from the Seifert surfaces $\S'_\pm$. The fixed point sets are drawn in green. The beta curves are not shown, but can be obtained by applying $\tau^\pm$ to the alpha curves.}
    \label{fig:hopf-link-heeg-diags}
\end{figure}

We first consider the grading on $\widehat{\HFR}(\S_2(H^+), \t^+, \bfw^+)$. 
Looking at the left image in \Cref{fig:hopf-link-heeg-diags}, there are four intersection points in $\bbT_\a^+ \cap \widehat{(M^+)^R}$:
\[x^+z^+, x^+w^+, y^+z^+, y^+w^+\]
The points $x^+z^+$ and $ x^+w^+$ both lie in one of the two real Spin$^c$-structures, while $y^+z^+$ and $y^+w^+$ both lie in the other; call these real Spin$^c$-structures $\mathfrak{s}_1^+$ and $\mathfrak{s}_2^+$ respectively. 
Using (\ref{int-sign}) to compute the grading, we have 
\[\grh(x^+z^+) = 0, \hspace{.2cm} \grh(x^+w^+) = 0, \hspace{.2cm} \grh(y^+z^+) = 0, \hspace{.2cm} \grh(y^+w^+) = 1.\]
Therefore, 
\[\widehat{\HFR}(\S_2(H^+), \t^+, \mathfrak{s}_1^+, \bfw^+) \cong \bbF^2 = \bbF_{(0)} \oplus \bbF_{(0)}\]
and 
\[\widehat{\HFR}(\S_2(H^+), \t^+, \mathfrak{s}_2^+, \bfw^+) \cong \bbF^2 = \bbF_{(0)} \oplus \bbF_{(1)},\]
where the subscripts denote the $\bbZ/2$ grading of the generator of each summand. 

Similarly, we may compute $\grh$ for $\widehat{\HFR}(\S_2(H^-), \t^-, \bfw^-)$. This time, 
\[\widehat{\HFR}(\S_2(H^-), \t^-, \mathfrak{s}_1^-, \bfw^-) \cong \bbF^2 = \bbF_{(0)} \oplus \bbF_{(1)}\]
and 
\[\widehat{\HFR}(\S_2(H^-), \t^-, \mathfrak{s}_2^-, \bfw^-) \cong \bbF^2 = \bbF_{(1)} \oplus \bbF_{(1)}.\]

As this example illustrates, $\grh$ generally depends on the orientation of the link components. 
\label[example]{hopf-link-ex}
\end{example}

\section{The Euler Characteristic}
\subsection{The sign of the Euler characteristic}
Let $L$ be an oriented link and fix an ordering of the link components. 
Fix a primitive class $S \in H_2(X, L ; \bbZ)$ satisfying $\partial S = [L]$. 
Let $(Y, \t, \bfw)$ be the double branched cover of $X$ along $L$ corresponding to the image of $S$ in $H_2(X, L ; \bbZ/2)$, where $\t$ is the branching involution.  
Using the absolute $\bbZ/2$ grading $\grh$ defined in Section 3, we may fix the sign of the hat version of the Euler characteristic $\widehat{\c}(\widehat{\HFR}(Y, \t, \bfw))$. From \Cref{reverse-it}, the sign will be unchanged if we both replace $L$ with the reverse $rL$ and reverse the ordering of the components. 
In fact, the Euler characteristic in each real $\text{Spin}^c$-structure is actually independent of the ordering of the link components. 

\begin{proposition}
    For each real $\text{Spin}^c$-structure on $(Y, \t, \bfw)$, the sign of the hat version of the Euler characteristic $\widehat{\c}(\widehat{\HFR}(Y, \t, \mathfrak{s}, \bfw))$ is independent of the ordering of the link components.
    \label[proposition]{order-inv}
\end{proposition}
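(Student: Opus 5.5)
The plan is to reduce to adjacent transpositions of the ordering and track the change in $\sign(\bfx)$ for every generator $\bfx \in \bbT_\a \cap \widehat{M^R}$. Every permutation of the link components is a product of adjacent transpositions. So it suffices to compare the gradings $\grh$ and $\grh'$ defined using orderings that differ by swapping components $C_i$ and $C_{i+1}$, with the orientation of $L$ and the class $S$ held fixed. We may use the same free Seifert surface $\S'$ and the same real Heegaard diagram for both, since the construction of the diagram does not depend on the ordering. By the independence statement of Section 3, the canonical orientation of $H_1(U_\a, \bfw)$, and hence of $\bbT_\a$, is also the same for both orderings. The reason is that the orientation of $\ker(H_0(\bfw) \to H_0(X))$ enters twice: once through $H_1(\widehat{\S'}) \cong H_1(\S') \oplus H_1(\widehat{\S'}, \S')$, and once through (\ref{triple-les}). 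A transposition flips both, and the two flips cancel. I will first re-verify this cancellation carefully, since the whole argument rests on it.

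Consequently the only change is in the orientation of $\widehat{M^R}$, and in fact only in the factor $\text{Sym}^{m-2k}(\widehat{C})$ of each stratum. The factor $\text{Sym}^k(\S')$ depends only on the orientation of $\S'$, which is unchanged. Write a generator as in (\ref{int-sign}) and let $n_j(\bfx)$ be the number of the points $c_1, \ldots, c_{m-2k}$ lying on $C_j$. Listing the $c$'s in the new order moves the block of $n_{i+1}(\bfx)$ points on $C_{i+1}$ past the block of $n_i(\bfx)$ points on $C_i$. This changes $\e(r, \s, s)$, and nothing else in (\ref{int-sign}), by $(-1)^{n_i(\bfx) n_{i+1}(\bfx)}$. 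Hence
\[\sign'(\bfx) = (-1)^{n_i(\bfx)n_{i+1}(\bfx)}\sign(\bfx).\]

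The key step is to show that the parities $n_j(\bfx) \bmod 2$ depend only on the real Spin$^c$-structure of $\bfx$. Then the sign change is a constant $\pm 1$ on each summand $\widehat{\CFR}(Y,\t,\bfw,\mathfrak{s})$. To prove this, take $\bfx, \bfy$ in the same real Spin$^c$-structure. Then $\e(\bfx,\bfy) = 0$, so there is a real domain connecting them. Its boundary consists of arcs along $\bsa$ and their $\t$-images along $\bsb$. In the quotient, restrict attention to how this boundary meets $C_j$. Each alpha arc has one endpoint at a point of $\bfx$ and one at a point of $\bfy$. A point of $\bfx$ or $\bfy$ lying on $C_j$ is exactly an endpoint of an alpha arc on $C_j$. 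Pairs $\{z, \t(z)\}$ off $C$ contribute in pairs. Therefore $n_j(\bfx) - n_j(\bfy)$ is congruent mod $2$ to the intersection number of the closed curve $\partial D$ with a suitable arc or surface dual to $C_j$. That intersection number vanishes because $\partial D$ is nullhomologous. Writing this mod $2$ count rigorously is the step I expect to be the main obstacle.

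Finally, I must rule out the constant factor being $-1$ on a summand with nonzero Euler characteristic, since a global flip would change the sign of $\widehat{\c}$. My first attempt is to show that if $n_i$ and $n_{i+1}$ are both odd on $\mathfrak{s}$, then $\widehat{\c}(\widehat{\HFR}(Y,\t,\mathfrak{s},\bfw)) = 0$. The idea is a sign-reversing pairing of generators, obtained by moving one point along $C_i$ past the basepoint $w_i$. Failing that, I would compare with Proposition~\ref{reverse-it} and with the $\text{Sym}(\widehat{C})$ orientation convention of \cite{GM1}, checking it against Example~\ref{hopf-link-ex}.
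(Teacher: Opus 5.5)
Your reduction to adjacent transpositions, your check that the orientation of $\bbT_\a$ is unchanged (the two occurrences of $\ker(H_0(\bfw)\to H_0(X))$ do cancel), and your formula $\sign'(\bfx)=(-1)^{n_i(\bfx)n_{i+1}(\bfx)}\sign(\bfx)$ are all correct. The last is the adjacent case of the paper's formula (\ref{swap-i-j}).

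\textbf{The gap.} It is in the step you call key. The inference ``$\e(\bfx,\bfy)=0$, so there is a real domain connecting them'' is not valid. $\e(\bfx,\bfy)=0$ only gives an ordinary domain $D\in\pi_2(\bfx,\bfy)$. The reflected domain $\t^*D$ also lies in $\pi_2(\bfx,\bfy)$, so the obvious symmetrization $D+\t^*D$ has $\bsa$-boundary with endpoints $2(\bfy-\bfx)$. It is therefore not a domain from $\bfx$ to $\bfy$.

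Your own mod $2$ count makes the problem sharper. That count is correct: $n_j(\bfx)+n_j(\bfy)\equiv(\delta-\t\delta)\cdot_\S C_j$ for any $\bsa$-chain $\delta$ from $\bfx$ to $\bfy$. It shows that a $\t$-invariant domain can exist only when the parities already agree, so the argument is circular.

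This failure really occurs. Take $X=S^1\times S^2$ and $L$ two oppositely oriented fibres bounding an annulus. The construction of Section 3 gives the standard genus one diagram of $S^1\times S^2$, with $\a_1$ a slightly tilted meridian, plus a small circle around $w_2$. Then $\a_1$ and $\b_1=\t(\a_1)$ meet only at $p_1\in C_1$ and $p_2\in C_2$. The generators through $p_1$ and through $p_2$ are joined by an ordinary bigon, so they lie in the same Spin$^c$-structure. But their parities are $(n_1,n_2)\equiv(1,1)$ and $(0,0)$, so no real domain joins them.

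So you have not shown that the sign change is constant on a real Spin$^c$-structure. For $X=S^3$ it can be recovered by a transfer argument using $H_2(S^3)=0$, but the proposition concerns general $X$. Your final step leans on this, since it is phrased as ``if $n_i$ and $n_{i+1}$ are both odd on $\mathfrak{s}$''.

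\textbf{The fix.} Drop that step and apply your pairing directly to generators, which is what the paper does.
\begin{itemize}
\item For the swap of $C_i$ and $C_{i+1}$, the sign changes exactly on generators with $n_i$ and $n_{i+1}$ both odd.
\item Use the small alpha circle around $w_{i+1}$, not around $w_i$. There is no such circle around $w_1$, so ``moving a point along $C_i$ past $w_i$'' fails when $i=1$.
\item This circle meets the beta curves only in its two points $p^\pm\in C_{i+1}$, which have opposite sign. They are the first and last points of $\widehat{C}_{i+1}$, so every generator contains exactly one of them.
\item Exchanging $p^+\leftrightarrow p^-$ preserves every $n_k$. It stays in the same real Spin$^c$-structure, because the $\t$-invariant disk inside both small circles is a real domain joining the two generators.
\item The exchange multiplies $\sign$ by $(-1)\cdot(-1)^{n_{i+1}-1}=(-1)^{n_{i+1}}=-1$, under either ordering.
\end{itemize}
Hence, within each $\mathfrak{s}$, the generators whose sign flips contribute zero to $\widehat{\c}$ for both orderings, and all other generators keep their sign.

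Your reduction to adjacent transpositions makes this cleaner than the paper's treatment of an arbitrary transposition. There, the case $i=1$ forces the use of an intermediate component with $n_k$ odd; with adjacent swaps you can always use $C_{i+1}$.
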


\begin{proof}
    It suffices to check that the sign is unchanged under exchanging the labels of two components. 
    Let $I$ and $J$ be two link components. 
    Fix some ordering of the components, and suppose without loss of generality that $I = I_i$ and $J = I_j$ for $i < j$ with respect to this ordering. 
    Take a representative $\S'$ of $S$ and construct a real Heegaard diagram for $(Y, \t, \bfw)$ as detailed in Section 3. 
    In particular, if $h$ is the genus of $\S'$ and $l$ is the number of link components, the genus of the surface in the real Heegaard splitting is $g = 2h + l-1$. Let $m = g + l-1$ be the number of alpha curves. 
    Then $l-1$ of these alpha curves, $\a_{g+1}, \ldots, \a_{g +l-1}$, are such that each $\a_{g+n}$ is a small simple closed curve encircling the basepoint on the $(n+1)$-th component of $L$, intersecting it in precisely two points with opposite sign. 
    The ordering and orientations of the components determine an orientation of $\widehat{M^R}$, and the orientations determine one of $\bbT_\a$. 
    If we replace this ordering with one that simply exchanges the positions of $I$ and $J$, then the orientation of $\bbT_\a$ doesn't change, but the orientation of components of $\widehat{M^R}$ will. 

    More concretly, fix $k \in \{0, \ldots, h + l-1\}$ and consider the stratum
    \[\text{Sym}^{2k}(\S') \times \text{Sym}^{m - 2k}(\widehat{C})\]
    of $\widehat{M^R}$. 
    This breaks up further into components of the form 
    \[\text{Sym}^{2k}(\S') \times (I_1^{n_1} \times \ldots \times I_{l}^{n_l})/\sim, \]
    where $n_1 + \ldots + n_l = m - 2k$ and we are quotienting by the symmetric group action on the second factor. 
    Exchanging the positions of $I$ and $J$ then changes the orientation on this component by a factor of
    \begin{equation}
    (-1)^{n_in_j + (n_i+n_j)\sum_{i < k < j}n_k}.
    \label{swap-i-j}
    \end{equation}
    Suppose that a component of this form changes orientation when we orient $\widehat{M^R}$ with respect to the new ordering. 
    Then the sign in (\ref{swap-i-j}) must be equal to $-1$, and we see that this requires at least one of $n_i$ and $n_j$ to be odd. Let 
    \[\bfx = \{z_1, \t(z_1), \ldots, z_k, \t(z_k), c_1^1, \ldots, c_{n_1}^1, \ldots, c^l_{1}, \ldots c^l_{n_l}\} \in \text{Sym}^{2k}(\S') \times (I_1^{n_1} \times \ldots \times I_{l}^{n_l})/\sim\]
    be a generator. 
    As usual, we require these points to be in order following the orientation and initial ordering of $L$. 
    Here, $c^p_n$ denotes the $n$-th point encountered on the $p$-th link component. 

    Suppose first that $n_j$ is odd. 
    Since $j > 1$, there is an alpha curve $\a_{g + (j-1)}$ which encircles the basepoint on $L_j$ and intersects this component in exactly two points $\{p^\pm\}$ with opposite sign. 
    By the construction of this real Heegaard diagram, we must have that either $c^j_1$ or $c^j_{n_j}$ lies on $\a_{g + (j-1)}$. 
    Without loss of generality, suppose $c^j_1 = p^+$. Then the point 
    \[\bfy = \{z_1, \t(z_1), \ldots, z_k, \t(z_k), c_1^1, \ldots, c_{n_1}^1, \ldots, c_2^j, \ldots, p^-, \ldots, c^l_{1}, \ldots c^l_{n_l}\}\]
    is another generator, and $\text{sgn}(\bfx) = -\text{sgn}(\bfy)$. 
    Moreover, these generators belong to the same real Spin$^c$-structure. 

    Otherwise, if $n_j$ is even, we must have that $n_i$ is odd. If $i > 1$, simply apply the argument above to $\a_{g + (i -1)}$. 
    So, suppose $i = 1$. Looking at (\ref{swap-i-j}), we necessarily have that $j > 2$ and there is some $1 < k < j$ with $n_k$ odd. 
    If this were not the case, then the orientation on $\text{Sym}^{2k}(\S') \times (I_1^{n_1} \times \ldots \times I_{l}^{n_l})/\sim$ would remain the same when we exchange the labels of $I$ and $J$. 
    Then apply the same argument to this $\a_{g + (k-1)}$. 

    In any case, we see that signs of intersection points which are contained in components of $\widehat{M^R}$ that change sign upon exchanging the labels of $I$ and $J$ will always cancel in pairs. 
    In addition, the points in each cancelling pair lie in the same real Spin$^c$-structure.
    Consequently, the Euler characteristic of each $\widehat{\HFR}(Y, \t, \mathfrak{s}, \bfw)$ is independent of the ordering of the link components.   
\end{proof}

Note that this allows us to upgrade \Cref{reverse-it}: the isomorphism class of $\widehat{\HFR}(Y, \t, \mathfrak{s}, \bfw)$ as a $\bbZ/2$ graded $\bbF$-module is independent of the ordering of the link components. 
Consequently, the grading is unchanged if we replace $L$ with its reverse. 

\begin{remark}
    The sign of the Euler characteristic does depend on the orientation of the link components, in the case that $L$ has more than one component. 
    The double branched cover of the Hopf link in $S^3$ provides an example where different choices of orientations give rise to different values of $\widehat{\c}(\widehat{\HFR}(Y, \t, \bfw))$. 
    Let $H^\pm$ the Hopf link with linking number $\pm 1$. 
    In \Cref{hopf-link-ex}, we computed $\grh$ for $\widehat{\HFR}(\S_2(H^\pm), \t^\pm, \bfw^\pm)$. 
    Summing the signs of intersection points, we can compute the Euler characteristics $\widehat{\c}$ for each of these: 
    \[\widehat{\c}(\widehat{\HFR}(\S_2(H^+), \t^+, \bfw^+)) = 2\]
    \[\widehat{\c}(\widehat{\HFR}(\S_2(H^-), \t^-, \bfw^-)) = -2.\]
    It is currently unknown whether or not the sign of the Euler characteristic actually depends on the choice of primitive homology class $S \in H_2(X, L ;\bbZ)$.
\end{remark}

\subsection{Double branched covers of links}
We now specialize to the case of links $L$ in $S^3$. 
As outlined in the introduction, real Heegaard Floer homology admits a decomposition by real Spin$^c$-structures. 
For double branched covers $\S_2(L)$ over links in $S^3$, equipped with their branching involution $\t_L$, each Spin$^c$ structure has a unique real Spin$^c$-structure, and so we can consider the Euler characteristic in each Spin$^c$-structure $\mathfrak{s}$:
\[\cs(L) = \widehat{\c}(\widehat{\HFR}(\S_2(L), \t_L, \mathfrak{s}, \bfw)).\]
Using $\grh$, the signs of each of these are fixed, along with the sign of the total Euler characteristic. 
By \Cref{order-inv}, these are invariants of the oriented link $L$. 

\begin{definition}
    Let $L \subset S^3$ be a link, and let $(\S_2(L), \t_L)$ be the double branched cover of $S^3$ along $L$. Fix a basepoint on each component. Define 
    \[\ctot(L) = \widehat{\c}(\widehat{\HFR}(\S_2(L), \t_L, \bfw )),\]
    and for each $\mathfrak{s \in }\text{Spin}^c(\S_2(L))$, define  
    \[\cs(L) = \widehat{\c}(\widehat{\HFR}(\S_2(L), \t_L, \mathfrak{s}, \bfw)),\]
    where $\widehat{\HFR}(\S_2(L), \t_L, \bfw)$ is given the grading $\grh$ from \Cref{main-theorem2}.
\end{definition}

Finally, we show that $\ctot$ can be expressed in terms of the Alexander polynomial. 

\begin{proof}[Proof of \Cref{alex-poly}]
    We show that $\ctot(L)$ and $2^{l-1}\D_L(i, \ldots, i)$ both take the same value on the unknot and satisfy the same oriented skein relation. 

    The double branched cover of the unknot $U \subset S^3$ admits a genus zero real Heegaard diagram with no alpha curves.
    Then $\widehat{\CFR}(\S_2(U), \t_U)$ is generated by a single point, which trivially has sign $1$, so that $\ctot(U) = 1$. 
    
    Next, let $L_0, L_+, L_-$ be three oriented links in $S^3$ admitting diagrams obtained from braid closures that differ only at one crossing, where they look as in \Cref{fig:skein-relns}. 

    \begin{figure}[htbp]
    \centering
    \def\svgwidth{.8\textwidth}
\begingroup%
  \makeatletter%
  \providecommand\color[2][]{%
    \errmessage{(Inkscape) Color is used for the text in Inkscape, but the package 'color.sty' is not loaded}%
    \renewcommand\color[2][]{}%
  }%
  \providecommand\transparent[1]{%
    \errmessage{(Inkscape) Transparency is used (non-zero) for the text in Inkscape, but the package 'transparent.sty' is not loaded}%
    \renewcommand\transparent[1]{}%
  }%
  \providecommand\rotatebox[2]{#2}%
  \newcommand*\fsize{\dimexpr\f@size pt\relax}%
  \newcommand*\lineheight[1]{\fontsize{\fsize}{#1\fsize}\selectfont}%
  \ifx\svgwidth\undefined%
    \setlength{\unitlength}{1252.40226746bp}%
    \ifx\svgscale\undefined%
      \relax%
    \else%
      \setlength{\unitlength}{\unitlength * \real{\svgscale}}%
    \fi%
  \else%
    \setlength{\unitlength}{\svgwidth}%
  \fi%
  \global\let\svgwidth\undefined%
  \global\let\svgscale\undefined%
  \makeatother%
  \begin{picture}(1,0.25928316)%
    \lineheight{1}%
    \setlength\tabcolsep{0pt}%
    \put(0,0){\includegraphics[width=\unitlength,page=1]{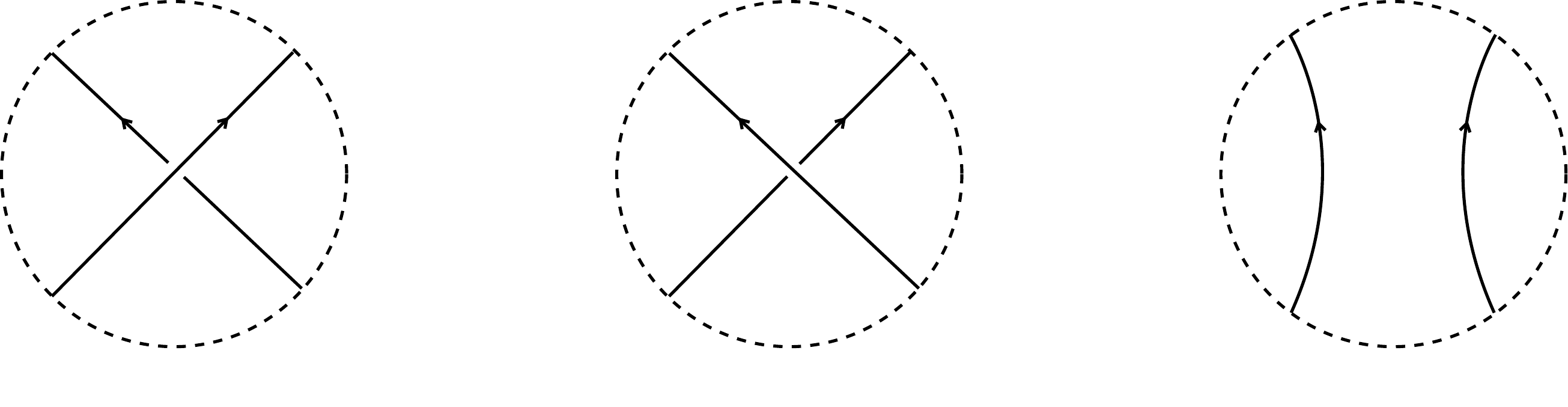}}%
    \put(0.09856877,0.00299212){\color[rgb]{0,0,0}\makebox(0,0)[lt]{\smash{\begin{tabular}[t]{l}$L_+$\end{tabular}}}}%
    \put(0.49591411,0.00299212){\color[rgb]{0,0,0}\makebox(0,0)[lt]{\smash{\begin{tabular}[t]{l}$L_-$\end{tabular}}}}%
    \put(0.8776826,0.00299212){\color[rgb]{0,0,0}\makebox(0,0)[lt]{\smash{\begin{tabular}[t]{l}$L_0$\end{tabular}}}}%
  \end{picture}%
\endgroup%

    \caption{Oriented skein relations.}
    \label{fig:skein-relns}
    \end{figure}
    
    The multivariate polynomial $\D_L(t_1, \ldots, t_l)$ and Conway-normalized Alexander polynomial $\D_L(t)$ of an $l$-component oriented link are related by 
    \[(t^{-1/2} - t^{1/2})^{l-1}\D_L(t, \ldots, t) = \D_L(t).\]
    Using this and the standard skein relation for $\D_L(t)$, it is straightforward to verify that $\D_L(i, \ldots,i)$ satisfies the following two skein relations: 
    \[\D_{L_+}(i, \ldots, i) - \D_{L_-}(i, \ldots, i) = \D_{L_0}(i, \ldots, i) \hspace{.25cm} \text{ if }\lvert L_\pm\rvert = \lvert L_0\rvert + 1,\] 
    \[\D_{L_+}(i, \ldots, i)-\D_{L_-}(i, \ldots, i) = -2\D_{L_0}(i,\ldots,i)\hspace{.25cm} \text{ if } \lvert L_\pm\rvert = \lvert L_0 \rvert -1.\]
    Multiplying through by the appropriate powers of two, we need to show that $\ctot$ satisfies the following two skein relations:
    \begin{equation} 
    \ctot(L_+) - \ctot(L_-) = 2\ctot(L_0) \hspace{.25cm} \text{ if }\lvert L_\pm\rvert = \lvert L_0\rvert + 1,
    \label[equation]{skein-reln-1}
    \end{equation}
    \begin{equation}
    \ctot(L_+) - \ctot(L_-) = -\ctot(L_0) \hspace{.25cm}\text{ if } \lvert L_\pm\rvert = \lvert L_0 \rvert -1.
    \label[equation]{skein-reln-2}
    \end{equation}
    \mycomment{
    We may assume that the crossing we are changing is the last generator in the braid word for $L_+$. 
    By inserting a cancelling pair of generators in the braid word if needed, we may also assume that this generator has previously appeared in the braid word. 
    Use Seifert's algorithm to construct Seifert surfaces $\S_+', \S_-',$ and $\S_-'$ for each of the links $L_+, L_-,$ and $L_0$ respectively. 
    Thicken the Seifert surfaces and take the alpha curves pictured in [FIG]. 
    Using the involution to obtain the beta curves gives real Heegaard diagrams for the double branched covers of these links equipped with their branching involutions. 
    These real Heegaard diagrams are identical away from the crossing that gets changed.
    To show the first relation (\ref{skein-reln-1}), label the link components and alpha curves as pictured in the leftmost image in [FIG?]. 
    Order and orient the alpha curves so that $\langle \a_1, \ldots, \a_{2h+l-1}, \a_{2h+l}, \ldots, \a_{2h+2(l-1)}\rangle$ determines the canonical orientation on $\bbT_\a^+$. Then the analogous ordering and orientation of the alpha curves on $\S_-$ also determines the canonical orientation on $\bbT_\a^-$. Removing the pair $\a_{2h+l-1}$ and $ \a_{2h+l}$ and keeping the ordering and orientation of the other curves the same results in a canonically oriented set of alpha curves for $\S_0$. Figure [FIG?] shows the real Heegaard diagrams for $\S_2(L_+), \S_2(L_-),$ and $\S_2(L_0)$ around the crossing that gets changed.}
    
    Let $b^+ = x_1 \cdot \ldots \cdot x_n \in B$ be a braid word such that the closure yields a diagram for $L_+$. 
    By cyclically permuting the letters of $b^+$ if necessary, we may assume that the crossing being changed is the one given by the last letter $x_n$. 
    Also, by inserting a canceling pair of letters, we may assume that that if $x_n = \s_i \in B$, then $\lvert x_m\rvert = \lvert\s_i\rvert$ for some $m < n$. 
    In other words, $x_n$ is not the first instance of that generator (or its inverse) in the braid word. 
    Then diagrams for $L_-$ and $L_0$ are given by closures of the braids $b^- = x_1 \cdot \ldots \cdot(-x_n)$ and $b^0 = x_1 \cdot \ldots \cdot x_{n-1}$ respectively. 

    Apply Seifert's algorithm to these diagrams to obtain Seifert surfaces $\S_+', \S_-'$, and $\S_0'$ for $L_+, L_-$, and $L_0$ respectively. 
    As in Section 3 of \cite{GM1}, thicken these surfaces to obtain handlebodies and choose alpha curves on the boundaries $\S_+, \S_-$, and $\S_0$ as follows: for each generator $\s \in B$, suppose that $\lvert x_i\rvert$ is the first instance of that generator in the braid word. 
    Then for each subsequent $x_j$, $j > i$, such that $\lvert x_j \rvert = \s_i$, take an alpha curve which passes over the two handles of the surface coming from each of these generators. 

    We first verify the first relation. Suppose that strands $s$ and $s+1$ are involved in the crossing $x_n$. 
    Fix the ordering of the link components so that strand $s$ belongs to component $1$ and strand $s+1$ belongs to component 2. 
    Fix basepoints $w_1$ and $w_2$ at the top of strands $s$ and $s+1$. Order the rest of the components arbitrarily, and fix basepoints for each remaining component at the top of one of the strands belonging to the component. 
    Around each basepoint $w_j, j \geq 2$, take an alpha curve which encircles that basepoint in a small simple closed curve. 
    Along with the alpha curves described earlier, this information determines a pointed real Heegaard diagram for $(\S_2(L_+), \t^+, \bfw^+)$. 
    Let $\a_1^+$ be the alpha curve that runs over the handle coming from $x_n$, and let $\a_2^+$ be the alpha curve encircling $w_2$. 
    Orient these two curves as in the leftmost image in \Cref{fig:skein-reln-heeg1}. Orient and order the remaining alpha curves so that $\langle \a_1^+, \ldots, \a_{2h+2(l-1)}^+ \rangle$ determines the canonical orientation of $\bbT_\a^+$. 

    Label the components of $L_-$ and fix basepoints on $L_-$ analogously, and take the same alpha curves as well. 
    Then we also obtain a pointed real Heegaard diagram for $(\S_2(L_-), \t^-, \bfw^-)$. 
    For $L_0$, we can take analogues of the alpha curves $\a_j^+$ for $j \geq 3$. 
    Let component 1 be the component of $L_0$ which contains strands $s$ and $s+1$, and keep the basepoint $w_1$. 
    Label the remaining components of $L_0$ in the same order as $L_\pm$, and keep the remaining basepoints. 
    This determines a pointed real Heegaard diagram for $(\S_2(L_0), \t^0, \bfw^0)$. 
    Furthermore, the bases $\langle \a_1^-, \ldots, \a_{2h+2(l-1)}^- \rangle $ and $\langle \a_3^0, \ldots, \a_{2h+2(l-1)}^0\rangle$ determine the canonical orientations on $\bbT_\a^+$ and $\bbT_\a^0$. The resulting Heegaard diagrams then only differ around the handle coming from the generator $\pm x_n$, where they look as in \Cref{fig:skein-reln-heeg1}. 

    \begin{figure}[htbp]
    \centering
    \def\svgwidth{.9\textwidth}
    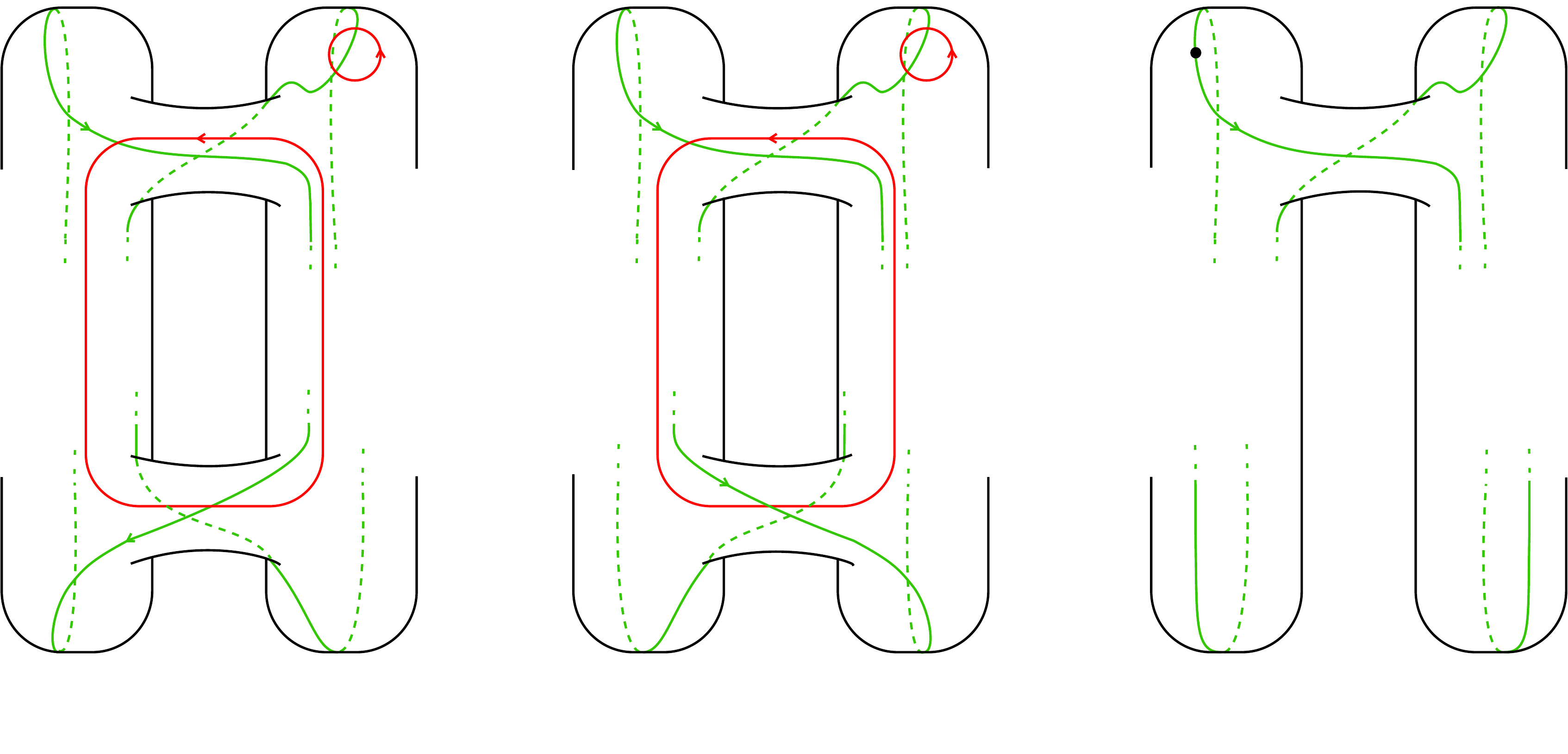
    \caption{Heegaard diagrams for $(\S_2(L_+), \t^+, \bfw^+), (\S_2(L_-), \t^-, \bfw^-)$, and $(\S_2(L_0), \t^0, \bfw^0)$ when $\lvert L_\pm\rvert=\lvert L_0\rvert + 1$.}
    \label{fig:skein-reln-heeg1}
    \end{figure}

    Note that $\a_2^\pm$ intersects the fixed point set $C^\pm$ on $\S_\pm$ in exactly two points with opposite sign. 
    Label these two points $x_\text{first}^\pm$ and $x_\text{last}^\pm$, so that when traversing component 2, $x_\text{first}^\pm$ is the very first intersection point between component 2 and the alpha curves, and $x_\text{last}^\pm$ is the last. 
    Additionally, let $p^\pm \in \a_1^\pm \cap C^\pm$ be the intersection point that lies on the handle coming from the generator $\pm x_n$. 
    
    Let 
    \[\bfxpf = \{z_1, \ldots, \t^+(z_k), c_1^1, \ldots, c_{n_1}^1, \ldots, c_1^l, \ldots, c_{n_l}^l\} \in \bbT_\a^+ \cap \widehat{(M^+)^R}\]
    be an intersection point such that $c_1^2 = x_{\text{first}}^+ \in \a_2^+ \cap C^+$. 
    Suppose first that $c_{n_1}^1 = p^+ \in \a_1^+ \cap C^+$. In other words, the point lying on $\a_1^+$ lies on the handle coming from the generator $x_n$. Consider the following three corresponding points:
    \begin{align*}
    \bfxpl &= \{z_1, \ldots, \t^-(z_k), c_1^1, \ldots, c_{n_1}^1, c_2^2, \ldots, c_{n_2}^2,x_{\text{last}}^+,\ldots, c_1^l, \ldots, c_{n_l}^l\} \in \bbT_\a^+ \cap \widehat{(M^+)^R}, \\
    \bfxmf &= \{z_1, \ldots, \t^-(z_k), c_1^1, \ldots, c_{n_1-1}^1, x_{\text{first}}^-, c_2^2, \ldots, c_{n_2}^2, p^-,\ldots, c_1^l, \ldots, c_{n_l}^l\} \in \bbT_\a^- \cap \widehat{(M^-)^R}, \\
    \bfxml &= \{z_1, \ldots, \t^-(z_k), c_1^1, \ldots, c_{n_1-1}^1, c_2^2, \ldots, c_{n_2}^2,p^-,x_{\text{last}}^-,\ldots, c_1^l, \ldots, c_{n_l}^l\} \in \bbT_\a^0 \cap \widehat{(M^0)^R}.
    \end{align*}
    The signs of these intersection points are related as follows: 
    \begin{align*}
    \sign(\bfxpl) &= (-1)(-1)^{n_2-1}\sign(\bfxpf), \\
    \sign(\bfxmf) &= (-1)^{n_2}\sign(\bfxpf), \\
    \sign(\bfxml) &= (-1)(-1)^{n_2}(-1)^{n_2}\sign(\bfxpf).
    \end{align*}
    \mycomment{
    Then $\sign(\bfxmf) = (-1)^{n_2}\sign(\bfxpf)$. We can also consider the points in $\bbT_\a^\pm \cap (\widehat{M^\pm})^R$ we get by replacing $x^\pm_{\text{first}}$ with $x^\pm_{\text{last}}$: 
    \[\bfxpl = \{z_1, \ldots, \t_-(z_k), c_1^1, \ldots, c_{n_1}^1, c_2^2, \ldots, c_{n_2}^2,x_{\text{last}}^+,\ldots, c_1^l, \ldots, c_{n_l}^l\} \in \bbT_\a^+ \cap (\widehat{M^-})^R,\]
    \[\bfxml = \{z_1, \ldots, \t_-(z_k), c_1^1, \ldots, c_{n_1-1}^1, c_2^2, \ldots, c_{n_2}^2,p^-,x_{\text{last}}^-,\ldots, c_1^l, \ldots, c_{n_l}^l\} \in \bbT_\a^+ \cap (\widehat{M^-})^R.\]
    Then $\sign(\bfxpl) = (-1)(-1)^{n_2-1}\sign(\bfxpf)$ and $\sign(\bfxml) = (-1)(-1)^{n_2}(-1)^{n_2}\sign(\bfxpf)$.}
    For this set of four points, there is one corresponding intersection point 
    \[\bfx^0 = \{z_1, \ldots, \t^0(z_k), c_1^1, \ldots, c_{n_1 -1}^1, c_2^2, \ldots, c_{n_2}^2, \ldots, c_1^l, \ldots, c_{n_l}^l\} \in \bbT_\a^0 \cap \widehat{(M^0)^R}\]
    which satisfies $\sign(\bfx^0) = \sign(\bfxpf)$. Therefore,
    \[(\sign(\bfxpf) + \sign(\bfxpl)) - (\sign(\bfxmf) + \sign(\bfxml)) = 2\sign(\bfx^0).\]
    Each intersection point in $\bbT_\a^0 \cap \widehat{(M^0)^R}$ determines a set of four points in $\bbT_\a^\pm \cap \widehat{(M^\pm)^R}$ as above. 
    For a point in $\bbT_\a^+ \cap \widehat{(M^+)^R}$ where the point on $\a_1^+$ does not lie on the handle of $\S_+$ coming from the crossing $x_n$, the corresponding point in $\bbT_\a^- \cap \widehat{(M^-)^R}$ will have the same sign. 
    So, summing over all intersection points, we have that 
    \[\ctot(L_+) - \ctot(L_-) = 2\ctot(L_0).\]

    The other relation is proven similarly. This time, the strands $s$ and $s+1$ involved in the crossing $x_n$ of $L_+$ belong to the same component. 
    Label this component 1 and put a basepoint at the top of strand $s$. 
    Label the rest of the components arbitrarily, fixing a basepoint at the top of each strand where a new component starts. 
    Label the components of $L_-$ and fix basepoints in the same way. 
    Resolving the crossing $x_n$ splits that component into two components. 
    Let component 1 be the the component of $L_0$ containg strand $s$ and let component 2 be the one containing strand $s+1$. 
    Keep the basepoint $w_1$ and fix an additional basepoint $w_2$ at the top of strand $s+1$. 
    The remaining components are then labelled in the same order as that of $L_\pm$, and the same basepoints are kept as well (shifting the label of all of these up by one). 
    Fix alpha curves on $\S_+$ as before, and let $\a_1^+$ be the curve going over the handle coming from $x_n$. 
    There are similar alpha curves on $\S_-$. 
    Finally, the curves $\a_j^+$ for $ j \geq 2$ have analogues $\a_j^0$ on $\S_0$. 
    Instead of $\a_1^+$, take a small simple closed curve $\a_1^0$ on $\S_0$ which encircles $w_2$ and intersects $C^0$ in two points, $x_\text{first}^0$ and $x_\text{last}^0$. 
    As before, using the involution to obtain the beta curves, we have real Heegaard diagrams for $(\S_2(L_+), \t^+, \bfw^+), (\S_2(L_-), \t^-, \bfw^-)$, and $(\S_2(L_0), \t^0, \bfw^0)$. 
    These diagrams are identical away from the handle associated to the crossing being changed, where they appear as in \Cref{fig:skein-reln-heeg2}.
    \begin{figure}[htbp]
    \centering
    \def\svgwidth{.9\textwidth}
    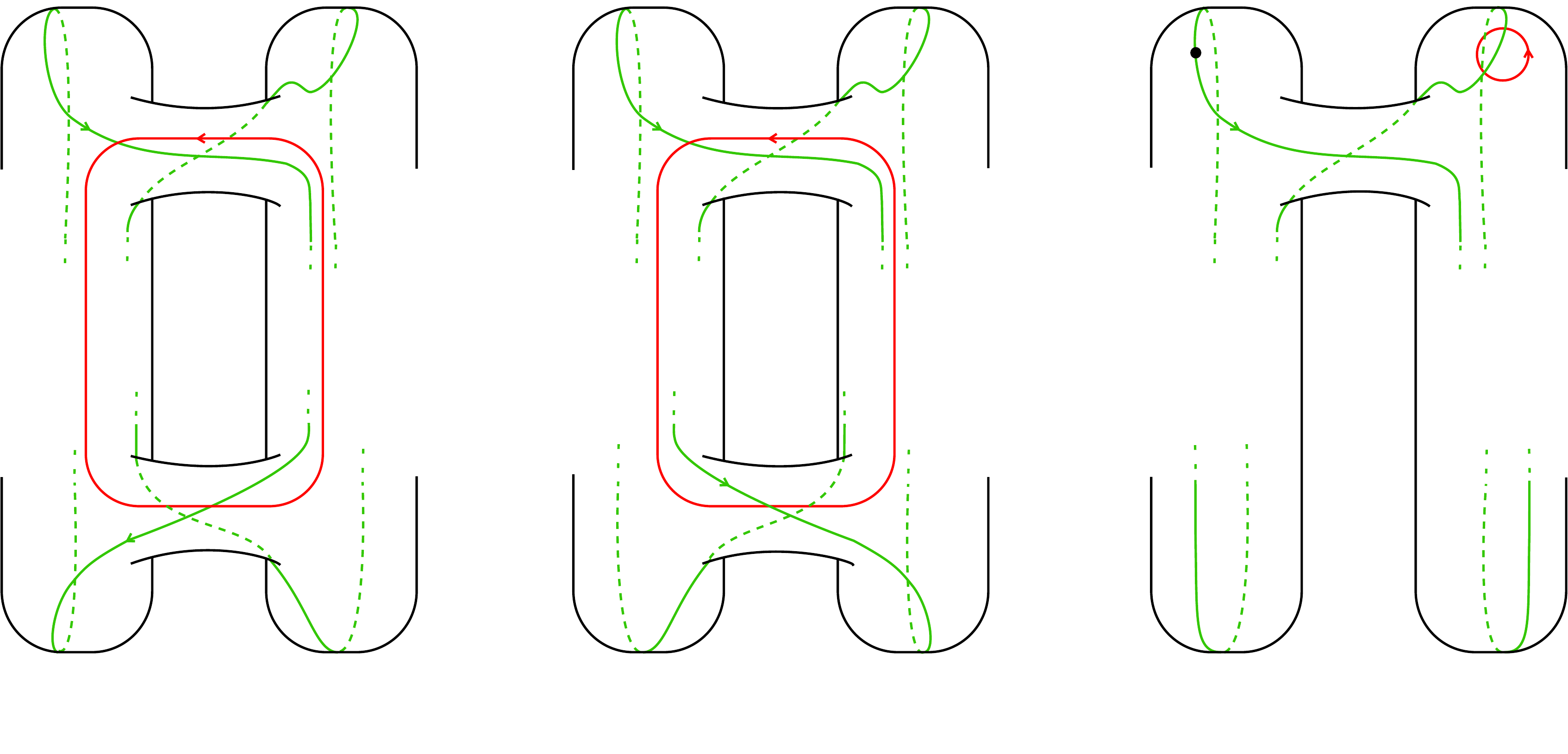
    \caption{Heegaard diagrams for $(\S_2(L_+), \t^+, \bfw^+), (\S_2(L_-), \t^-, \bfw^-)$, and $(\S_2(L_0), \t^0, \bfw^0)$ when $\lvert L_\pm\rvert=\lvert L_0\rvert - 1$.}
    \label{fig:skein-reln-heeg2}
    \end{figure}
    Again, we label the points $p^\pm \in \a_1^\pm \cap C^\pm$.  
    Orient $\a_1^+$ counterclockwise as in the leftmost image of \Cref{fig:skein-reln-heeg2}, and order and orient the rest of the alpha curves so that they determine the canonical orientation on $\bbT_\a^+$. 
    Then we can orient $\a_1^-$ and $\a_1^0$ counterclockwise as in \Cref{fig:skein-reln-heeg2} and order and orient the remaining alpha curves on $\S_-$ and $\S_0$ in the same way as the curves on $\S_+$, so that the resulting sets of alpha curves determine the canonical orientations of $\bbT_\a^-$ and $\bbT_\a^0$. 
    Now we match up intersection points as before. Let 
    \[\bfx^+ = \{z_1, \ldots, \t^+(z_k), c_1^1, \ldots, c_{n_1}^1, \ldots, c_1^l, \ldots, c_{n_l}^l\} \in \bbT_\a^+ \cap \widehat{(M^+)^R}\]
    be an intersection point. 
    As before, we may assume that the point on $\a_1^+$ lies on the handle coming from the crossing $x_n$. 
    Then it must be the case that $c_{n_1}^1  = p^+ \in \a_{1}^+ \cap C^+$. Consider the point
    \[\bfx^- = \{z_1, \ldots, \t^-(z_k), c_1^1, \ldots, c_{i-1}^1, p^-, c_{i}^1, \ldots, c_{n_1-1}^1, \ldots, c_1^l, \ldots, c_{n_l}^l\} \in \bbT_\a^- \cap \widehat{(M^-)^R}.\]
    Here, $p^- \in \a_1^-\cap C^-$ is the intersection point lying on $\a_1^-$, so that $c_1^1, \ldots, c_{i-1}^1$ are the points on component 1 encountered before encountering the crossing $x_n$ for the first time.
    Then we have that $\sign(\bfx^-) = (-1)^{n_1 - i}\sign(\bfx^+)$, so that 
    \[\sign(\bfx^+) - \sign(\bfx^-) = \begin{cases}
        0 & n_1 -i \text{ is even} \\
        2\sign(\bfx^+) & n_1 - i \text{ is odd}
    \end{cases}\]
    Now, consider the following two intersection points in $\bbT_\a^0\cap\widehat{(M^0)^R}$:
    \[\bfx^0_\text{first} = \{z_1, \ldots, \t^0(z_k), c_1^1, \ldots, c_{i-1}^1,x^0_\text{first},c_i^1,\ldots,c_{n_1-1}^1, \ldots, c_1^l, \ldots, c_{n_l}^l\}\]
    \[\bfx^0_\text{last} = \{z_1, \ldots, \t^0(z_k), c_1^1, \ldots, c_{i-1}^1,c_i^1,\ldots,c_{n_1-1}^1, x^0_\text{last},\ldots, c_1^l, \ldots, c_{n_l}^l\}.\]
    We see that $\sign(\bfx_\text{first}^0) = (-1)(-1)^{n_1 - i}\sign(\bfx_\text{last}^0)$. The orientations of $\a_1^+$ and $\a_1^0$ also imply that $\sign(\bfx_\text{last}^0) = -\sign(\bfx^+)$. Therefore, 
    \[
        \sign(\bfx_\text{first}^0) + \sign(\bfx_\text{last}^0) = \begin{cases}
            0 & n_1 - i \text{ is even} \\
            -2\sign(\bfx^+) & n_1 - i \text{ is odd}
        \end{cases}
    \]
    Summing over all intersection points, we have that 
    \[\ctot(L_+) - \ctot(L_-) = -\ctot(L_0).\]   
\end{proof}

In Section 7 of \cite{GM1}, the authors show that $\ctot(L)$ and $\cs(L)$ can be computed algorithmically for any link $L$ in $S^3$. 
Using a computer program in \cite{HFR_python}, the authors present a table of $\ctot(K)$ and $\cs(K)$ for a selection of knots. 
They fix the signs so that the sum of Euler characteristics in each $\text{Spin}^c$-structure is positive. 
By making additions to this program in \cite{HFR_python_links}, we can compute the invariants $\ctot(K)$ and $\cs(K)$ using $\grh$ to determine the signs in our new convention.
We present the table in \Cref{table} below. 

\begin{figure}
\begin{center}
\begin{tabular}{|c|c|c|}
    \hline
     Knot & $\ctot(K)$ & $\cs(K)$ \\
     \hline
     $0_1$ & $1$ & $[1]$ \\
     $3_1$ & $-1$ & $[-1, 1, -1]$ \\
     $4_1$ & $3$ & $[1,1 -1, 1, 1]$ \\
     $5_1$ & $-1$ & $[1, -1, -1, -1, 1]$ \\
     $5_2$ & $-3$ & $[-1, -1, 1, -1, 1, -1, -1]$ \\
     $6_1$ & $5$ & $[1, 1, 1, -1, 1, -1, 1, 1, 1]$ \\
     $6_2$ & $-1$ & $[-1, -1, 1, -1, 1, 1, 1, -1, 1, -1, -1]$\\
     $6_3$ & $3$ & $[1, -1, 1, 1, 1, -1, -1, -1, 1, 1, 1, -1, 1]$ \\
     $7_1$ & $1$ & $[-1, 1, 1, -1, 1, 1, -1]$ \\
     $7_2$ & $-5$ & $[-1, -1, -1, 1, -1, 1, -1, 1, -1, -1, -1]$ \\
     $7_3$ & $-1$ & $[1, 1, -1, 1, -1, -1, -1, -1, -1, 1, -1, 1, 1]$ \\
     $7_4$ & $-7$ & $[-1, -1, 1, -1, 1, -1, -1, -1, -1, -1, 1, -1, 1, -1, -1]$ \\
     $7_5$ & $1$ & $[1, -1, 1, 1, 1, -1, -1, -1, 1, -1, -1, -1, 1, 1, 1, -1, 1]$ \\
     $7_6$ & $-5$ & $[-1, -1, 1, -1, -1, -1, 1, -1, 1, 1, 1, -1, 1, -1, -1, -1, 1, -1, -1]$ \\
     $7_7$ & $7$ & $[1, 1, -1, 1, 1, 1, 1, -1, 1, -1, -1, -1, 1, -1, 1, 1, 1, 1, -1, 1, 1]$ \\
     $8_1$ & $7$ & $[1, 1, -1, 1, 1, 1, -1, 1, 1, 1, -1, 1, 1]$ \\
     $8_2$ & $-3$ & $[1, 1, -1, 1, -1, -1, -1, -1, 1, -1, -1, -1, -1, 1, -1, 1, 1]$ \\
     $8_3$ & $9$ & $[1, 1, 1, -1, 1, -1, 1, 1, 1, 1, 1, -1, 1, -1, 1, 1, 1]$ \\
     $8_4$ & $-1$ & $[-1, -1, -1, 1, -1, 1, -1, 1, 1, 1, 1, 1, -1, 1, -1, 1, -1, -1, -1]$ \\
     $8_5$ & $-1$ & $[1, 1, -1, -1, 1, -1, 1, 1, -1, -1, -1, -1, -1, 1, 1, -1, 1, -1, -1, 1, 1]$ \\
     $8_6$ & $-3$ & $[-1, -1, 1, -1, -1, -1, 1, -1, 1, 1, 1, -1, 1, 1, 1, -1, 1, -1, -1, -1, 1, -1, -1]$ \\
     $8_7$ & $1$ & $[-1, 1, -1, -1, -1, 1, 1, 1, -1, 1, 1, -1, 1, 1, -1, 1, 1, 1, -1, -1, -1, 1, -1]$ \\
     $9_1$ & 1 & $[1, -1, -1, 1, 1, 1, -1, -1, 1]$ \\
     $10_{152}$ & $-1$ & $[1, 1, -1, 1, -1, -3, -1, 1, -1, 1, 1]$ \\
    \hline   
\end{tabular}  
\end{center}
\caption{ }
\label{table}
\end{figure}

There are still a number of questions about this grading.
It is unknown if $\grh$ depends on the choice of primitive homology class $S \in H_2(X, L ; \bbZ)$. 
Although we have seen that the grading on $\widehat{\HFR}$ is independent of the ordering of the link components, it is unclear if the grading on the other versions of $\HFR$ depends on the ordering. 
In the case of double branched covers of links in $S^3$, it would be interesting to interpret the Euler characteristic in each Spin$^c$-structure in terms of known invariants.  

\bibliographystyle{amsalpha}
\bibliography{refs}

\end{document}